\definecolor{red2}{rgb}{0.933333,0,0}             %
\definecolor{black}{rgb}{0,0,0}                   %
\definecolor{dodgerblue}{rgb}{0.117647,0.564706,1}%
\definecolor{red3}{rgb}{1,0,0.5}                  %
\newcommand\ch{\textcolor{black}}                 %
\newcommand\chhf{\textcolor{black}}               %
\newcommand\cch{\textcolor{black}}           %
\newcommand\RR{{\mathbb R}}
\newcommand\NN{{\mathbb N}}
\newcommand\ZZ{{\mathbb Z}}
\newcommand\TT{{\mathbb T}}
\def\span{{\rm span}}
\def\d={\,:=\,}
\font\frakten=eufm10
\newtheorem{thm}{Theorem}
\newtheorem{lemma}[thm]{Lemma}
\newtheorem{cor}[thm]{Corollary}
\newtheorem{prop}[thm]{Proposition}
\newtheorem{Defn}[thm]{Definition}
\newtheorem{Ex}[thm]{Example}
\newtheorem{Rem}[thm]{Remark}
\newtheorem{Exs}[thm]{Examples}
\newtheorem{Rems}[thm]{Remarks}
\newtheorem{Defrem}[thm]{Definition and Remark}
\newtheorem{Remnt}[thm]{}
\newenvironment{defn}
 {\begin{Defn} \begin{rm}} {\end{rm} \hfill $\Box$ \end{Defn}}
\newenvironment{ex}
 {\begin{Ex} \begin{rm}} {\end{rm} \hfill $\Box$ \end{Ex}}
\newenvironment{rem}
 {\begin{Rem} \begin{rm}} {\end{rm} \hfill $\Box$ \end{Rem}}
\begin{document}

\author{J.~Bruna, J.~Cuf\'i, H.~F{\"u}hr, M.~Mir\'o}


\title[Characterizing abelian admissible groups]{Characterizing abelian admissible groups}

\thanks{The research of J. Bruna, J. Cuf\'{\i} and M. Mir\'{o} are partially supported by the grants
MTM2008-05561-C02-02 of the Ministerio de Ciencia e Innovaci\'{o}n
of Spain and 2009 SGR 1303 of the Generalitat de Catalunya}

\keywords{abelian admissible groups, quasi regular representation,
wavelet transforms, Calderon condition, fundamental region, Lie
algebra, Lie group } \subjclass[2000]{Primary 42C40}

\date{\today}

\begin{abstract}
By definition, admissible matrix groups are those that give rise to
a wavelet-type inversion formula. This paper investigates necessary
and sufficient admissibility conditions for abelian matrix groups.
\chhf{We start out by deriving a block diagonalization result for
commuting real valued matrices. We then reduce the question of
deciding admissibility to the subclass \cch{of} connected and simply
connected groups, and derive a general admissibility criterion for
exponential solvable matrix groups. For abelian matrix groups with
real spectra, this yields an easily checked necessary and sufficient
characterization of admissibility.  As an application, we sketch a
procedure how to check admissibility of a matrix group generated by
finitely many commuting matrices with positive spectra.}

We also present examples showing that the simple answers that are available for the real spectrum case fail in the general case.

An interesting byproduct of our considerations is a method that allows for an abelian Lie-subalgebra $\mathfrak{h} \subset gl(n,\mathbb{R})$ to check whether $H = \exp(\mathfrak{h})$ is closed.
\end{abstract}

\maketitle

\section{Introduction}  \label{sect:intro}

Let us quickly recall the group-theoretic formalism for the construction of continuous wavelet transforms in higher dimensions. For a more complete introduction, we refer to~\cite{FuMa,LWWW}.
The starting point is a subgroup $H < {\rm GL}(n,\mathbb{R})$, called the {\em dilation group}. Its action on $\mathbb{R}^n$ gives rise to the semidirect product $G = \mathbb{R}^n \rtimes H$, which is just the group of affine mappings generated by $H$ and all translations. We write elements of this group as $(x,h)$, with $x \in \mathbb{R}^n$, $h \in H$. This group acts unitarily on the Hilbert space ${\rm L}^2(\mathbb{R}^n)$ via the {\em quasiregular representation}
\[
 \pi(x,h) f(y) = |{\rm det}(h)|^{-1/2} f(h^{-1}(y-x))~.
\]
The associated continuous wavelet transform of $f \in {\rm L}^2(\mathbb{R})$ is obtained by picking
a suitable $\psi \in {\rm L}^2(\mathbb{R})$ and letting
\[
 V_\psi f (x,h) = \langle f, \pi(x,h) \psi \rangle = \int_{\mathbb{R}^n} f(y) |{\rm det}(h)|^{-1/2} \overline{\psi \left( h^{-1}(y-x) \right)} dy~\mbox{ for } (x,h) \in G~.
\]
The wavelet $\psi$ is called {\em admissible} if $V_\psi : {\rm
L}^2(\mathbb{R}^n) \to {\rm L}^2(\ch{\rm G})$ is isometric. In this
case, we have the {\em wavelet inversion formula}
\[
  f(y) = \int_{\ch{H}} \int_{\ch{\mathbb{R}^n}} V_\psi f(x,h) |{\rm det}(h)|^{-1/2} \psi \left( h^{-1}(y-x) \right)  d{\chhf{x}} \frac{d\ch{h}}{|{\rm det}(h)|} ~,
\] to be read in the weak sense (rather than pointwise)\ch{, where $dh$ is the left Haar measure on~$H$.} The matrix group $H$ is called {\em admissible} if there exists an admissible vector in~${\rm L}^2(\mathbb{R}^n)$.

Necessary and sufficient criteria for admissibility have been studied with increasing generality since the early nineties; see eg. \cite{Mu,Bo,BeTa,Fu,Fu2,FuMa,LWWW}. A complete characterization of admissibility in terms of the dual action appeared quite recently \cite{Fu_Calderon}. The existing literature already suggests a large variety of admissible matrix groups. This paper undertakes a more systematic study of admissibility for the subclass of abelian matrix groups. These groups were previously studied in \cite{Fu2,Larsonetal}, with the former source focussing on the dilation groups $H$ for which $\pi$ is a finite sum of irreducibles, and the latter on one-parameter groups. Already in the restricted setting of \cite{Fu2}, the class of admissible matrix groups quickly becomes too large to manage: It turns out that the conjugacy classes of admissible abelian matrix groups for which the quasiregular representation is irreducible are in natural correspondence to the isomorphism classes of commutative algebras with unity of the same dimension up to isomorphism. In particular, a classification modulo conjugacy of this subclass is out of sight.

Thus it seems that the best one can hope for are methods that allow, for a concrete matrix group described by finite data (e.g.,  by --possibly infinitesimal-- generators), to decide admissibility in an algorithmic way.
Our aim is to derive sufficient conditions that are easy to check and widely applicable. In particular, we want to address finitely generated abelian dilation groups: Given any set of pairwise commuting matrices $A_1,\ldots,A_k$, when does there exist a $\psi \in {\rm L}^2(\mathbb{R}^n)$ guaranteeing the wavelet inversion formula
\[
 f(y) = \sum_{\ell \in \mathbb{Z}^k} \int_{\ch{\mathbb{R}^n}} V_\psi f(x,A^\ell) |{\rm det}(A^\ell)|^{-1/2} \psi \left( A^{-\ell}(y-x) \right) dx ~\chhf{?}
\] Here we used the multi-index notation $A^\ell = A_1^{\ell_1} \cdot \ldots \cdot A_k^{\ell_k}$.
The interest in discrete groups is amplified by the following
observation: Suppose that, for a suitable lattice $\Gamma \subset
\mathbb{R}^n$ the system $(\pi(x,A^\ell) \psi)_{x \in \Gamma, \ch{
\ell \in \mathbb{Z}^k}}$ is a wavelet frame, i.e., one has for all
$f \in {\rm L}^2(\mathbb{R}^n)$ that
\[
 A \| f \|_2^2\le \sum_{x,\ell} \left| \langle f, \ch{\pi}(x,A^\ell) \psi  \ch{\rangle} \right|^2 \le B \| f \|_2^2~.
\] Then the results of \cite{AnCaDeLe} imply that $H = \langle A_1,\ldots,A_k \rangle$ is admissible.
Thus the methods developed here will allow to derive necessary criteria for dilation groups generating a wavelet frame.

\chhf{Let us now give a short outline of the paper and the main results. Section \ref{sect:adm_matrix} contains an overview of admissible matrix groups and the criteria characterizing them. Section \ref{sect:comm_matrix} is concerned with the structure of commuting matrices. Theorem \ref{thm:structure_commuting_real} is the main structure result in this section, describing a form block diagonalization of commuting matrices. It is probably well-known; we include a full proof to stress that the bases corresponding to the block diagonalization can be computed via the Gauss algorithm, if the spectra of the commuting matrices are known. As an application of the structure result, we consider the question whether the exponential image of an abelian matrix Lie algebra is closed. It turns out that once the spectra of a set of generators are known, this question can be decided by repeated applications of the Gauss algorithm (Theorem \ref{thm:char_closed_abel}). This theorem, and some of the auxiliary results leading up to it, can be considered to be of independent interest, but they are also crucial for the subsequent results, in particular for the proof and construction of counterexamples in the final section of this paper. In Section \ref{sect:cont_adm_groups}, we return to the discussion of admissible groups. Here, the chief result is Proposition \ref{prop:struct_ab}, reducing the problem of deciding admissibility for arbitrary abelian matrix groups to the subclass of simply connected, connected groups. For this setting, the admissibility criteria from Section \ref{sect:adm_matrix} are investigated more closely in Section \ref{sect:adm_casc}.  It turns out that for connected abelian matrix groups with real spectrum, there exist simple computational criteria allowing to decide admissibility (Corollary \ref{cor:solv_adm}, in conjunction with Proposition \ref{prop:local_free}). Section \ref{sect:fga_real_spect} applies these results to describe a procedure for checking admissibility of a matrix group generated by finitely many commuting matrices with positive spectra. Finally, Section \ref{sect:counter_ex} gives examples showing that the criteria established for the real spectrum case can fail in the general case.
}

\section{Admissible matrix groups}
\label{sect:adm_matrix}

Throughout the paper, $GL(n,\RR)$ denotes the group of invertible
real-valued $n \times n$ matrices, $gl(n,\RR)$ the vector space of
all matrices. A {\em matrix group} is a subgroup of $GL(n,\RR)$.
For a Lie-subgroup of $H$ of $GL(n,\RR)$, the Lie-algebra is the
subspace ${\mathfrak{h}} \subset gl(n,\RR)$ of tangent vectors of
curves in $H$ through identity, endowed with the usual matrix
commutator.

\ch{The dual group $\widehat{\mathbb{R}^n}$ is the character group
of $\mathbb{R}^n$ suitably identified with the space of row
vectors.} In the following, the Fourier transform of $f \in {\rm
L}^1(\mathbb{R}^n)$ is defined as
\[
 \widehat{f}(\xi) = \int_{\mathbb{R}^n} f(x) e^{- 2 \pi i \ch{\langle \xi,x \rangle}} dx~.
\]
Admissibility of a matrix group $H$ is closely related to its {\em
dual action}. It can be understood as a right action on the Fourier
\ch{side.} Given a dilation group \ch{$H < {\rm GL}(n,\mathbb{R})$
and $\psi\in {\rm L}^2(\mathbb{R}^n) $}, admissibility of $\psi$ is
equivalent to the {\em Calderon condition} \cite{FuMa,LWWW}
\begin{equation} \label{eqn:Calderon}
 \int_H |\widehat{\psi}(\ch{\omega h})|^2 dh = 1 ~~,\mbox{a.e.}~ \omega \in \ch{\widehat{\mathbb{R}^n}} ~~.
\end{equation}
A particularly well-studied case concerns the existence of {\em open
dual orbits}, which is closely related to so-called {\em
discrete-series representations}, see \cite{BeTa,Fu}. In this case,
\ch{$\widehat{\mathbb{R}^n}$} is the union of finitely many orbits
(up to a set of measure zero), and admissible vectors exist iff all
open orbits have associated compact stabilizers \cite{Fu}.

We will now present an overview of known properties of admissible matrix groups. We first recall an observation from \cite{Fu2}:
\begin{lemma}
 Let $H < {\rm GL}(n,\mathbb{R})$ be a Lie subgroup. If it is admissible, it is closed.
\end{lemma}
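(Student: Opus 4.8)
The plan is to exploit the Calderon condition \eqref{eqn:Calderon}: if $\psi$ is admissible then $\widehat{\psi}\in {\rm L}^2(\widehat{\mathbb{R}^n})$ satisfies $\int_H|\widehat{\psi}(\omega h)|^2\,dh=1$ for a.e.\ $\omega$. I argue by contradiction. Assume $H$ is admissible but not closed, and set $K:=\overline{H}$, its closure in ${\rm GL}(n,\mathbb{R})$; by Cartan's theorem $K$ is a closed Lie subgroup containing $H$ as a dense subgroup. I would first record the reduction $\dim K>\dim H$: a discrete subgroup of a Lie group is closed, so a non-closed $H$ is positive-dimensional; and if $\dim K=\dim H$ then $K$ and $H$ share the Lie algebra $\mathfrak{h}$, so their identity components coincide, $K_0=H_0$. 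As $K_0$ is closed and $K/K_0$ is discrete, $H$ is then a union of $K_0$-cosets indexed by a subgroup of $K/K_0$ and is therefore closed, a contradiction. Thus the closure is genuinely larger.

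The analytic core is the following computation. Let $S\subseteq\widehat{\mathbb{R}^n}$ be any Borel set that is invariant under the dual action of $H$ and satisfies $0<|S|<\infty$. Integrating the Calderon identity over $S$ and applying Tonelli's theorem (all integrands are nonnegative) gives
\[
 |S|=\int_S\Big(\int_H|\widehat{\psi}(\omega h)|^2\,dh\Big)\,d\omega=\int_H\Big(\int_S|\widehat{\psi}(\omega h)|^2\,d\omega\Big)\,dh .
\]
For fixed $h$ I substitute $\eta=\omega h$ in the inner integral; the dual action has Jacobian $|\det h|$, and the $H$-invariance of $S$ gives $Sh=S$, so the inner integral equals $|\det h|^{-1}\int_S|\widehat{\psi}(\eta)|^2\,d\eta$. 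Writing $J:=\int_S|\widehat{\psi}|^2$ I obtain $|S|=J\cdot\int_H|\det h|^{-1}\,dh$. Since a compact matrix group is automatically closed, $H$ is non-compact, and $h\mapsto|\det h|^{-1}$ is a positive character on $H$; a short computation via the Weil formula (splitting off $\ker|\det(\cdot)|$) shows $\int_H|\det h|^{-1}\,dh=\infty$. As $|S|<\infty$, this forces $J=0$, i.e.\ $\widehat{\psi}=0$ almost everywhere on $S$.

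It remains to produce enough such sets $S$ to conclude that $\widehat{\psi}=0$ almost everywhere, which contradicts the Calderon identity. This is the step where non-closedness is used essentially, and it is the main obstacle: I must exhibit $H$-invariant Borel sets of finite positive measure whose union is conull. Here I would invoke the structural source of non-closedness, namely that the dense winding of $H$ inside $K$ takes place over a nontrivial compact subgroup $C\subseteq K$ (for connected abelian $K\cong\mathbb{R}^a\times\mathbb{T}^b$ one has $b\ge 1$, and $C$ is the relevant torus). The $C$-invariant ``shells'' cut out by the Euclidean norms in the directions rotated by $C$ are then bounded, hence of finite Lebesgue measure, are $K$-invariant and a fortiori $H$-invariant, and cover $\widehat{\mathbb{R}^n}$ up to the null set $\{0\}$. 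Applying the previous paragraph to each shell yields $\widehat{\psi}=0$ a.e., the desired contradiction.

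The delicate points, which I expect to be the heart of a complete proof, are the verification that non-closedness always forces such a compact winding subgroup and the measurability and finiteness of the associated invariant decomposition, equivalently the failure of a naive Weil unfolding of $\int_H$ through the non-Hausdorff quotient $H\backslash K$. This failure is precisely the phenomenon obstructing admissibility, so I expect most of the work to lie in turning the heuristic ``winding over a compact subgroup'' into the concrete family of finite-measure invariant sets used above.
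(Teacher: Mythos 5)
The paper itself offers no proof of this lemma --- it is quoted as an observation from \cite{Fu2} --- so your argument has to stand on its own, and it has a genuine gap at exactly the point you flag as the ``main obstacle.'' Your unfolding computation is fine, but note what it actually shows: since $\int_H|\det h|^{-1}\,dh=\infty$ forces $J=0$, the identity $|S|=J\cdot\int_H|\det h|^{-1}\,dh$ gives $|S|=0$. In other words, step 2 proves that \emph{no} $H$-invariant Borel set with $0<|S|<\infty$ can exist --- and step 3 then tries to construct precisely such sets. They do not exist in general, for a reason independent of closedness: if $H$ contains any element $h_0$ with $|\det h_0|\neq 1$, then $H$-invariance gives $|S|=|Sh_0|=|\det h_0|\,|S|$, so every $H$-invariant measurable set has measure $0$ or $\infty$. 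Non-closed groups with such elements abound; take
\[
H=\left\{\mathrm{diag}\bigl(e^{t},R(s),R(\sqrt{2}\,s)\bigr):\ s,t\in\mathbb{R}\right\}\subset GL(5,\mathbb{R}),
\]
where $R(s)$ is the $2\times 2$ rotation by angle $s$. This $H$ is not closed (its closure is $\mathbb{R}\times\mathbb{T}^2$), yet its orbits are unbounded and your ``shells'' are $C$-invariant but not $H$-invariant; no finite positive measure invariant set exists, so your scheme produces no contradiction for this group. The compact-winding picture you invoke only covers the case $|\det|\equiv 1$ on $H$ (e.g.\ a dense line in a torus), which is the easy case; moreover an admissible group \emph{must} contain elements with $|\det h|\neq\Delta_H(h)$, so the case your method handles and the case relevant to admissibility are essentially disjoint.

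The known proof (the one behind the citation to \cite{Fu2}, see also \cite{Fu_LN}) runs along completely different lines and avoids invariant sets altogether. Admissibility means $V_\psi$ is an isometry intertwining $\pi$ with the left regular representation $\lambda_G$ of $G=\mathbb{R}^n\rtimes H$; hence $g\mapsto\langle\pi(g)\psi,\psi\rangle=\langle\lambda_G(g)V_\psi\psi,V_\psi\psi\rangle$ is a matrix coefficient of $\lambda_G$ and therefore vanishes at infinity on $G$. If $H$ is not closed, pick $h_j\in H$ with $h_j\to k\in\overline{H}\setminus H$ in $GL(n,\mathbb{R})$; then $(0,h_j)\to\infty$ in the intrinsic topology of $G$ (no subsequence can converge in $H$, else $k\in H$), so $\langle\pi(0,h_j)\psi,\psi\rangle\to 0$, while strong continuity of the dilation action gives $\langle\pi(0,h_j)\psi,\psi\rangle\to\langle\pi(0,k)\psi,\psi\rangle$. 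Thus $\langle\pi(0,k)\psi,\psi\rangle=0$ for every $k\in\overline{H}\setminus H$; since $\overline{H}\setminus H$ is dense in $\overline{H}$ (a proper subgroup with interior would be open, hence closed), it accumulates at the identity, forcing $\|\psi\|_2^2=0$, a contradiction. If you want to salvage a Calder\'on-condition argument instead, the correct mechanism in the mixed example above is ergodic, not measure-theoretic: by Birkhoff's theorem the integral of an $L^1$ function along the dense winding $s\mapsto(\theta_1+s,\theta_2+\sqrt{2}\,s)$ is $0$ or $\infty$ for a.e.\ starting point, so $\int_H|\widehat{\psi}(\omega h)|^2\,dh\in\{0,\infty\}$ a.e.\ and can never equal $1$; but making that work for an arbitrary non-closed Lie subgroup requires structure theory your sketch does not provide.
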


In the general setting, the following characterization of admissible
matrix groups can be derived. For the formulation, \ch{given $\omega
\in \widehat{\mathbb{R}^n}$} let ${\rm stab}_H(\omega)$ denote the
stabilizer \ch{of $\omega$} under the dual action of \ch{$H$ in
$\widehat{\mathbb{R}^n}$}. Given a Borel $H$-invariant subset $U
\subset \widehat{\mathbb{R}^n}$, a subset $C \subset U$ meeting each
$H$-orbit in precisely one point is called a {\em fundamental
domain}. We call a subset $C \subset \widehat{\mathbb{R}^n}$ a {\em
measurable fundamental domain} if $C$ is a Borel subset and a
fundamental domain for an $H$-invariant Borel subset of full
measure. The following theorem is \cite[Theorem 6]{Fu_Calderon}; see
\cite{MiroThesis} for further equivalent formulations of the
criterion.

\begin{thm} \label{thm:char_adm}
Let $H < {\rm GL}(n,\mathbb{R})$ be given. Then the following are equivalent:
\begin{enumerate}
 \item[(a)] $H$ is admissible.
 \item[(b)] The following three conditions are fulfilled:
\begin{enumerate}
\item[(i)] There exists a measurable fundamental domain of $\widehat{\mathbb{R}^n}$.
 \item[(ii)] For almost all $\omega \in \widehat{\mathbb{R}^n}$, ${\rm stab}_H(\omega)$ is compact.
 \item[(iii)] There exists $h \in H$ such that $|{\rm det}(h)| \not= \Delta_H(h)$.
\end{enumerate}
\end{enumerate}
\end{thm}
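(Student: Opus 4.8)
The plan is to take the Calderon condition \eqref{eqn:Calderon} as the point of departure: a vector $\psi$ is admissible exactly when $\int_H |\widehat\psi(\omega h)|^2\,dh=1$ for a.e.\ $\omega$, so $H$ is admissible precisely when there exists $P=\widehat\psi\in{\rm L}^2(\widehat{\mathbb{R}^n})$ solving this equation. The left-hand quantity is constant along the orbits $\omega H$ of the dual action, so the natural strategy is to disintegrate Lebesgue measure on $\widehat{\mathbb{R}^n}$ along these orbits and reduce the single global ${\rm L}^2$-equation to a family of fiberwise conditions, one per orbit. Conceptually this mirrors the abstract criterion that $H$ is admissible iff the quasiregular representation $\pi$ embeds as a subrepresentation of the left regular representation $\lambda_G$ of $G=\mathbb{R}^n\rtimes H$; the three conditions in (b) are the orbit-space translations of this embedding, via Mackey's description of $\widehat{G}$ and the Plancherel decomposition of $\lambda_G$.

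First I would address (i). To carry out a disintegration $d\omega=\int_C \beta_\sigma\,d\nu(\sigma)$, with each $\beta_\sigma$ supported on the orbit through $\sigma\in C$, I need the orbit space to be measure-theoretically tame, and the existence of a measurable fundamental domain $C$ is precisely the hypothesis that makes such a disintegration available. Conversely, (i) is forced by admissibility: by the standard dichotomy for Borel group actions (Effros, Mackey), an action admitting no measurable transversal on a conull invariant set produces a quasiregular representation with a non-type-I part, which cannot embed in $\lambda_G$. Thus (i) is equivalent to the measurability infrastructure underlying the whole argument, and I would establish it before the fiberwise analysis.

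With the disintegration in hand I would rewrite the Calderon condition on a fixed orbit. Identifying $\omega H\cong {\rm stab}_H(\sigma)\backslash H$, the orbit integral becomes $\int_H |P(\sigma h)|^2\,dh=\int_{\omega H}|P|^2\,d\lambda_\sigma$, where $\lambda_\sigma$ is the pushforward of Haar measure under $h\mapsto\sigma h$. Since the fibers of this map are cosets of the stabilizer, finiteness of the integral for a.e.\ $\sigma$ (so that it can equal $1$ rather than diverge) forces ${\rm stab}_H(\sigma)$ to be compact, which yields (ii). For (iii) I would compare the two quasi-invariant measures living on each orbit. A short computation gives that $\lambda_\sigma$ transforms under the action of $h_0$ by $\Delta_H(h_0)^{-1}$; and since every $h_0\in H$ maps each orbit to itself, it acts trivially on the transversal, so the entire Jacobian $|{\rm det}(h_0)|^{-1}$ of Lebesgue measure is absorbed into $\beta_\sigma$. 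Hence the Radon--Nikodym derivative $w_\sigma=d\beta_\sigma/d\lambda_\sigma$ satisfies $w_\sigma(\eta h_0)=\frac{|{\rm det}(h_0)|}{\Delta_H(h_0)}\,w_\sigma(\eta)$, the reciprocal of the modular function $\Delta_G(x,h)=\Delta_H(h)|{\rm det}(h)|^{-1}$ of $G$. Because $|{\rm det}|/\Delta_H$ is a homomorphism $H\to\mathbb{R}_{>0}$, it is nonconstant iff there is some $h$ with $|{\rm det}(h)|\neq\Delta_H(h)$, i.e.\ iff (iii) holds; in that case iterating $h_0$ drives $w_\sigma$ to arbitrarily small values, so ${\rm essinf}_{\omega H}w_\sigma=0$ and one can concentrate $|\widehat\psi|^2$ where $w_\sigma$ is small to meet the normalization $\int_{\omega H}|P|^2\,d\lambda_\sigma=1$ with $\int_C\int_{\omega H}|P|^2\,d\beta_\sigma\,d\nu<\infty$. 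If (iii) fails then $G$ is unimodular, $w_\sigma$ is constant on each orbit, and the normalization forces an infinite global norm, exactly as for $\mathbb{R}^n\rtimes{\rm SO}(n)$, where spherical averaging of $|\widehat\psi|^2$ to $1$ yields $\|\widehat\psi\|_2^2=\int_0^\infty c\,r^{n-1}\,dr=\infty$.

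The main obstacle, and the step I would spend the most care on, is the measure-theoretic bookkeeping of the previous paragraph: rigorously constructing $\beta_\sigma$ and $\nu$ from a measurable fundamental domain, justifying that $\beta_\sigma$ carries the full cocycle $|{\rm det}|^{-1}$ for orbits of dimension strictly less than $n$, and turning the ``nonconstant cocycle'' statement into an actual ${\rm L}^2$ solution $P$ that is measurable across orbits (a measurable-selection argument) and normalized to $1$ on almost every orbit while keeping $\int_C(\cdots)\,d\nu$ finite. The necessity direction of (i) via the type-I dichotomy is the technically heaviest external ingredient. Once fiberwise existence is matched with a measurable selection over $C$, the three implications assemble into the asserted equivalence (a)$\Leftrightarrow$(b).
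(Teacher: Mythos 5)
First, a point of reference: the paper does not prove Theorem~\ref{thm:char_adm} at all --- it is quoted verbatim as Theorem~6 of \cite{Fu_Calderon} --- so your proposal can only be compared with the proof in that reference. Your framework (disintegrate Lebesgue measure over a measurable fundamental domain, analyze the Calder\'on condition orbit by orbit, and compare the fiber measures $\beta_\sigma$ with the Haar pushforwards $\lambda_\sigma$ through the cocycle $|{\rm det}|/\Delta_H$) is indeed the right one, and your cocycle computation, the role of $\Delta_G(x,h)=\Delta_H(h)|{\rm det}(h)|^{-1}$, and the necessity argument for (ii) via noncompact stabilizers forcing the orbit integral into $\{0,\infty\}$ are all correct. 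But two of the remaining implications have genuine gaps.

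The necessity of (i) is argued incorrectly. You claim that a non-regular action yields a quasiregular representation with a non-type-I part ``which cannot embed in $\lambda_G$.'' When the dual action is not regular, $G=\mathbb{R}^n\rtimes H$ is itself typically not type I, and then $\lambda_G$ is not type I either; non-type-I representations embed into such regular representations without obstruction, so no contradiction can be extracted this way. (Indeed, the whole point of \cite{Fu_Calderon} is that in the non-type-I regime, containment in $\lambda_G$ no longer governs the existence of admissible vectors.) The correct mechanism is ergodic-theoretic: if no measurable fundamental domain exists, the ergodic decomposition of Lebesgue measure involves ergodic measures not supported on single orbits, while a solution of the Calder\'on condition would force a decomposition over orbits; uniqueness of the ergodic decomposition gives the contradiction. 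This is exactly the argument this paper itself invokes in the proof of Theorem~\ref{thm:char_mfd_algebra}(b), citing \cite[Theorem 12]{Fu_Calderon}. Separately, your necessity argument for (iii) is incomplete: unimodularity does give that $w_\sigma$ is constant on each orbit, but the desired conclusion $\|P\|_2^2=\int_C w_\sigma\,d\nu(\sigma)=\infty$ does not follow from the disintegration identities, which are perfectly consistent with $\int_C w_\sigma\,d\nu<\infty$; the only global constraint they impose is $\int_C w_\sigma\,\lambda_\sigma(\mathcal{O}_\sigma)\,d\nu=\infty$, and this holds automatically once $H$ is noncompact because $\lambda_\sigma(\mathcal{O}_\sigma)=\infty$. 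Your model case $\mathbb{R}^n\rtimes {\rm SO}(n)$ is misleading here: that argument uses finiteness of Haar measure on the compact group (so that $\omega\mapsto\int_H \mathbf{1}_E(\omega h)\,dh$ is bounded), and it does not generalize to noncompact unimodular $G$, where a genuinely different argument (in the literature, Plancherel-type results for unimodular groups) is needed. Together with the measurable-selection work you yourself flag in the sufficiency direction, these are the actual substance of the theorem; as written, the proposal secures only condition (ii) and the general architecture.
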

Here $\Delta_H$ denotes the modular function of $H$. For abelian groups $H$, note that $\Delta_H(h)\equiv 1$.

Regarding the compact stabilizer condition, let us note the following facts:
\begin{lemma} \label{lem:cp_stab}
 Let $H$ be a closed matrix group. \cch{Then:}
\begin{enumerate}
 \item[(a)] The set $\Omega_c = \{ \omega \in \ch{\widehat{\mathbb{R}^n}
} : \ch{{\rm stab}_H(\omega)} \mbox{ is compact } \}$ is a Borel
subset of $\widehat{\mathbb{R}^n}$.
 \item[(b)] Suppose that $H$ is discrete. Then $\Omega_c$ is conull.
\item[(c)] Suppose that $H$ is simply connected, connected and abelian. Then $H$ acts freely on $\Omega_c$.
\end{enumerate}
\end{lemma}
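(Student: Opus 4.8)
I would establish the three parts separately, the common reduction being that the stabilizer $\mathrm{stab}_H(\omega) = \{h \in H : \omega h = \omega\}$ is a closed subgroup of the Lie group $H$, and that a closed subgroup of $GL(n,\mathbb{R})$ is compact if and only if it is bounded in operator norm: a bounded subgroup automatically has bounded inverses (since $h^{-1}$ lies in the same subgroup), so $|\det|$ is bounded away from $0$ on it, and its closure in $\mathbb{R}^{n^2}$ stays inside $GL(n,\mathbb{R})$ and is compact. Using this, for part (a) I would write the non-compact locus as $\Omega_c^c = \bigcap_{N \in \mathbb{N}} P_N$ with $P_N = \{\omega : \exists\, h \in H,\ \|h\| > N,\ \omega h = \omega\}$, so that it suffices to show each $P_N$ is Borel. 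Now $P_N$ is the image, under the continuous coordinate projection, of the set $W_N = \{(\omega,h) \in \widehat{\mathbb{R}^n} \times H : \omega h = \omega,\ \|h\| > N\}$. The decisive point is that $H$, being a second countable locally compact group, is $\sigma$-compact; hence $H \cap \{h : \|h\| > N\}$ is $\sigma$-compact, and intersecting the closed incidence condition $\omega h = \omega$ with the $\sigma$-compact product $\widehat{\mathbb{R}^n} \times (H \cap \{h : \|h\| > N\})$ exhibits $W_N$ as $\sigma$-compact. Since a continuous image of a $\sigma$-compact set is $\sigma$-compact, $P_N$ is $\sigma$-compact, in particular Borel. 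This $\sigma$-compactness is exactly what rescues the argument, because a projection of a merely Borel incidence set need not be Borel; it is the step I would treat most carefully.

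For part (b), discreteness of $H$ means that a subgroup is compact exactly when it is finite, so $\Omega_c^c = \{\omega : \mathrm{stab}_H(\omega)\ \text{is infinite}\}$. An infinite stabilizer in particular contains some $h \neq I$, and for each such $h$ the fixed set $\{\omega : \omega h = \omega\} = \{\omega : \omega(h - I) = 0\}$ is a proper linear subspace of $\widehat{\mathbb{R}^n}$, hence Lebesgue-null, because $h - I \neq 0$. As a discrete subgroup of the second countable group $GL(n,\mathbb{R})$, $H$ is countable, so $\Omega_c^c \subseteq \bigcup_{h \in H \setminus \{I\}} \{\omega : \omega h = \omega\}$ is a countable union of null sets; thus $\Omega_c$ is conull.

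For part (c), I would invoke the structure of connected abelian Lie groups: such a group is isomorphic to $\mathbb{R}^k \times \mathbb{T}^m$, and simple connectedness forces $m = 0$, so $H \cong \mathbb{R}^k$ (equivalently $\exp : \mathfrak{h} \to H$ is an isomorphism). Any nontrivial subgroup of $\mathbb{R}^k$ contains an element $v \neq 0$, hence the unbounded subgroup $\mathbb{Z}v$, and so is itself unbounded; thus $\mathbb{R}^k$ has no nontrivial compact subgroup. For $\omega \in \Omega_c$ the stabilizer $\mathrm{stab}_H(\omega)$ is by definition a compact subgroup of $H$, hence trivial, which is precisely the assertion that $H$ acts freely on $\Omega_c$. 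The only point requiring care here is the appeal to the classification of connected abelian Lie groups, after which the conclusion is immediate.
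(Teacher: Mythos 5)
Your proposal is correct. For parts (b) and (c) you follow essentially the same route as the paper: in (b) the paper likewise observes that $h\in{\rm stab}_H(\omega)$, $h\neq\mathbf{1}$, forces $\omega$ into a proper subspace (the left $1$-eigenspace of $h$) and that the union of these null sets over the countable group $H$ is null; in (c) the paper likewise reduces to $H\cong\mathbb{R}^k$ having no nontrivial compact subgroups, a point you justify in slightly more detail. The genuine difference is part (a): the paper simply cites \cite[5.6]{Fu_LN}, whereas you give a self-contained proof, and your argument is sound. The two pillars both check out: a subgroup closed in $GL(n,\mathbb{R})$ is compact iff norm-bounded (boundedness of the group bounds the inverses, hence bounds $|\det|$ away from zero, so the Euclidean closure stays inside $GL(n,\mathbb{R})$), and the unbounded-stabilizer locus is $\bigcap_N P_N$ where each $P_N$ is the projection of the incidence set $W_N=\{(\omega,h):\omega h=\omega,\ \|h\|>N\}$, which is a closed subset of the $\sigma$-compact space $\widehat{\mathbb{R}^n}\times(H\cap\{\|h\|>N\})$ (using that $H$, as a closed subgroup of the second countable locally compact group $GL(n,\mathbb{R})$, is $\sigma$-compact, as are its open subsets), so $P_N$ is $\sigma$-compact, in particular $F_\sigma$ and Borel. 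You are also right to flag the projection step as the delicate point: projections of general Borel sets are only analytic, and $\sigma$-compactness is exactly what avoids that trap. What your approach buys is a complete, elementary descriptive-set-theoretic proof inside the paper; what the paper's citation buys is brevity, deferring the same kind of argument to the reference.
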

\begin{proof}
For part (a), we refer to \cite[5.6]{Fu_LN}. For part (b), note that
$h \in \cch{{\rm stab}_H(\omega)}$ if and only if $\omega$ belongs
to the eigenspace of \ch{$h$} associated to the eigenvalue $1$. For
$h \not= \mathbf{1}$, this eigenspace is proper, and the union over
all associated eigenspaces is a set of measure zero. For $\omega$
outside this set, the stabilizer is trivial. For part (c), note that
$H \cong \mathbb{R}^k$, and thus $H$ has no compact subgroups.
\end{proof}

\section{Structure of commuting matrices}
\label{sect:comm_matrix}

It is well-known that, given a set of commuting matrices over the complex numbers, there exists a basis with respect
 to which all matrices have upper triangular form. In this section, we will derive a similar result over the reals.
Here, upper tridiagonalization generally cannot be fully achieved. However, an approach that is similar to the derivation of the real Jordan
form allows to \chhf{formulate} a useful replacement.

The formulation of the result requires additional terminology. We
write $\mathbb{K}$ for an element of $\{ \mathbb{R},\mathbb{C} \}$.
We will often use the natural embedding $gl(n,\mathbb{R}) \subset
gl(n,\mathbb{C})$. On the other hand, it will be convenient to
identify complex-valued matrices with real-valued ones with double
dimensions: Given a matrix $A \in gl(n,\mathbb{C}), A =
(a_{i,j})_{i,j}$, we denote by $i_{\mathbb{C}}(A)$ the $2n \times
2n$-matrix obtained by replacing each complex entry $a_{i,j}$ by the
real matrix $\left( \begin{array}{cc} {\rm Re}(a_{i,j}) & -{\rm
Im}(a_{i,j}) \\ {\rm Im}(a_{i,j}) & {\rm Re}(a_{i,j}\ch{)}
\end{array} \right)$. Thus $i_{\mathbb{C}} : gl(n,\mathbb{C}) \to
gl(2n,\mathbb{R})$ is an $\mathbb{R}$-algebra monomorphism, and we
identify $gl(n,\mathbb{C})$ with its image. Furthermore, we let
$\mathcal{N}(n,\mathbb{K})$ denote the subspace of proper upper
triangular matrices over $\mathbb{K}$.

Given a matrix $A \in gl(n,\mathbb{C})$, we let ${\rm spec}(A)$ denote its set of eigenvalues.

The following structure result will be useful for the study of properties of abelian matrix groups.
\begin{thm} \label{thm:structure_commuting_real}
Let $A_1,\ldots,A_k \in gl(n,\mathbb{R})$ be commuting matrices. Then there exists $B \in GL(n,\mathbb{R})$, $d_r \in \mathbb{N}$ and $\mathbb{K}_r \in \{ \mathbb{R},\mathbb{C} \}$ (for $r=1,\ldots,\ell$) such that
\[
  \sum_{r=1}^\ell d_r \cdot {\rm dim}_{\mathbb{R}} \mathbb{K}_r = n
\]
and, for $j=1,\ldots,k$,
\begin{equation}
 \label{eqn:blockst_real}
B A_j B^{-1} = \left( \begin{array}{cccc} A_{j,1} & 0 & \ldots & 0 \\ 0 &  A_{j,2} & 0
\ldots & 0 \\ 0 & 0 & \ddots & 0 \\ 0 & \ldots & \ldots & A_{j,\ell} \end{array}
\right) ~~
\end{equation} with blocks
\[ A_{j,r} \in  \mathbb{K}_r \cdot \mathbf{1}_{d_r} + \mathcal{N}(d_r,\mathbb{K}_r) ~. \]
If the spectra of $A_1,\ldots,A_k$ are known, $B$ is explicitly computable by repeated applications of Gauss elimination. \\
One has $\mathbb{K}_1 = \ldots = \mathbb{K}_\ell = \mathbb{R}$ iff ${\rm spec}(A_r) \subset \mathbb{R}$ for all $1 \le r \le \ell$.
\end{thm}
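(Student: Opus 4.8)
The plan is to complexify, decompose $\mathbb{C}^n$ into the joint generalized eigenspaces of the commuting family, and then descend back to $\mathbb{R}$ by grouping complex-conjugate eigenspaces into pairs. For a tuple $\lambda = (\lambda_1,\ldots,\lambda_k)$ with $\lambda_j \in \mathrm{spec}(A_j)$, I set $V_\lambda = \bigcap_{j=1}^k \ker\bigl( (A_j - \lambda_j \mathbf{1})^n \bigr) \subset \mathbb{C}^n$. Since the $A_j$ commute, the standard simultaneous-triangularization theory over $\mathbb{C}$ gives $\mathbb{C}^n = \bigoplus_\lambda V_\lambda$ (the sum running over the finitely many joint eigenvalue tuples that actually occur), each $V_\lambda$ is invariant under every $A_j$, and $A_j$ acts on $V_\lambda$ as $\lambda_j \mathbf{1} + N_j$ with $N_j := (A_j - \lambda_j \mathbf{1})|_{V_\lambda}$ nilpotent. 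Restricted to $V_\lambda$, the operators $N_1,\ldots,N_k$ form a commuting family of nilpotents.

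The first genuine step is to put all blocks into strictly upper triangular shape simultaneously. I would invoke the classical fact that a commuting family of nilpotent operators on a finite-dimensional space admits a common full flag $0 = U_0 \subset U_1 \subset \cdots \subset U_d = V_\lambda$ with $N_j U_i \subseteq U_{i-1}$ for all $i,j$; concretely this flag is built by repeatedly taking the common kernel $\bigcap_j \ker N_j$ and passing to the quotient, so that in the associated basis every $A_j|_{V_\lambda}$ lands in $\lambda_j \mathbf{1} + \mathcal{N}(d,\mathbb{C})$. Because each $U_i$ is the kernel of an explicit matrix, this basis is produced by iterated Gauss elimination once $\lambda$ is known.

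The heart of the argument is the passage back to $\mathbb{R}$. Complex conjugation $\sigma : v \mapsto \overline{v}$ fixes $\mathbb{R}^n$ and commutes with the real matrices $A_j$, hence $\sigma(V_\lambda) = V_{\overline{\lambda}}$. If $\lambda$ is real, then $V_\lambda$ is $\sigma$-stable, its fixed space $V_\lambda^{\mathbb{R}} = V_\lambda \cap \mathbb{R}^n$ is a real form on which the $A_j$ act by real operators, and I would run the flag construction over $\mathbb{R}$ to obtain a real basis realizing $A_j|_{V_\lambda^{\mathbb{R}}} \in \lambda_j \mathbf{1} + \mathcal{N}(d,\mathbb{R})$; this yields a block with $\mathbb{K}_r = \mathbb{R}$, $d_r = d$. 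If $\lambda$ is not real then $\overline{\lambda} \neq \lambda$, and I pair $V_\lambda$ with $V_{\overline{\lambda}}$: choosing a $\mathbb{C}$-basis $u_1,\ldots,u_d$ of $V_\lambda$ as above and transporting it by $\sigma$ to $V_{\overline{\lambda}}$, the real vectors $\mathrm{Re}\,u_i,\ \mathrm{Im}\,u_i$ form a basis of the $2d$-dimensional real space $(V_\lambda \oplus V_{\overline{\lambda}}) \cap \mathbb{R}^n$, and a direct check of the action on this basis (matching real and imaginary parts) shows that each $A_j$ acts exactly as $i_{\mathbb{C}}$ applied to the complex block $\lambda_j \mathbf{1} + N_j|_{V_\lambda}$. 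This is the block with $\mathbb{K}_r = \mathbb{C}$, $d_r = d$. Assembling the real bases of all these real and paired-complex pieces as the columns of $B^{-1}$ produces the asserted form (\ref{eqn:blockst_real}); the identity $\sum_r d_r \dim_{\mathbb{R}} \mathbb{K}_r = n$ is just the decomposition $\sum_\lambda \dim_{\mathbb{C}} V_\lambda = n$ counted with the real/complex pairing, and $B$ is Gauss-computable because every ingredient (the spaces $V_\lambda$, the flags, and the real/imaginary-part bookkeeping) is the solution of an explicit linear system.

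Finally, the equivalence. If all $\mathbb{K}_r = \mathbb{R}$, then each $B A_j B^{-1}$ is real and block upper triangular with diagonal entries the real scalars $\lambda_{j,r}$, so $\mathrm{spec}(A_j) \subset \mathbb{R}$ for every $j$. Conversely, if $\mathrm{spec}(A_j) \subset \mathbb{R}$ for all $j$, then each coordinate of every occurring joint eigenvalue tuple $\lambda$ is real, so no conjugate pairing ever occurs and all blocks are of type $\mathbb{K}_r = \mathbb{R}$. I expect the main obstacle to be the third paragraph: verifying carefully that the conjugation-transported basis turns the action of $A_j$ on the real $2d$-dimensional space into \emph{precisely} $i_{\mathbb{C}}$ of the complex block (rather than some $\mathrm{GL}$-conjugate of it), and that the flag construction can be carried out $\sigma$-equivariantly so that the real and paired-complex blocks both inherit the strict upper triangular nilpotent parts.
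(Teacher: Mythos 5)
Your proposal is correct and follows essentially the same route as the paper: decompose $\mathbb{C}^n$ into the joint generalized eigenspaces $N(\mathbf{A},\mathbf{\lambda})$, triangularize the commuting single-eigenvalue blocks, and pair each nonreal eigenspace with its conjugate via real and imaginary parts of a triangularizing basis, so that the resulting real block is exactly $i_{\mathbb{C}}$ of the complex triangular block. The only cosmetic difference is the triangularization step, where you build a common flag from iterated common kernels of the nilpotent parts, while the paper (Lemma \ref{lem:triangularize}) inducts on dimension via a common eigenvector; both constructions are Gauss-computable and interchangeable.
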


The existence of $B$ can be concluded from the structure results for maximal abelian matrix algebras, as developed in \cite{SuTy}. However, the explicit calculation of $B$ is not addressed in that source; to begin with, we would have to compute a maximal abelian matrix algebra containing the $A_j$. For this reason, we give a full proof. The proof strategy consists in first computing a decomposition of $\mathbb{C}^n$ into a sum of invariant subspaces $V_i$ with the property, that the restriction of each $A_j$ has a single complex eigenvalue. We then compute bases of the $V_i$ that triangularize these restrictions. Real-valued bases are constructed by combining bases of suitable pairs of $V_i$. The union of these bases then gives the columns of $B$.

\begin{defn}
 Let $A \in gl(n,\mathbb{K})$,  $\lambda \in \mathbb{K}$ and a subspace $V \subset \mathbb{K}^n$ be given. We write
\[
N(A,\lambda,V) = \{ v \in V:  (A-\lambda\ch{\mathbf{1}})^n (v) = 0
\} ~.\] We let $N(A,\lambda) = N(A,\lambda,\mathbb{K}^n)$. Given
tuples $\mathbf{A}=(A_1,\ldots,A_k)$ of matrices and
$\mathbf{\lambda} \in \ch{\mathbb{K}^{k}}$, we \chhf{define}
\[
 N(\mathbf{A},\mathbf{\lambda}) = \bigcap_{j=1}^k N(A_j,\lambda_j)~.
\]
\vspace*{-7pt}
\end{defn}

\pagebreak

\begin{lemma} \label{lem:decomp_gen_es}
 Let $A \in gl(n,\mathbb{K})$. Suppose that $V$ is an $A$-invariant subspace and let $V = \bigoplus_i V_i$ be a direct sum decomposition into $A$-invariant subspaces. Then
\[
 N(A,\lambda,V) = \bigoplus_i N(A,\lambda,V_i)~.
\]
\end{lemma}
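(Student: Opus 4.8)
The plan is to prove the two inclusions separately, exploiting the fact that the generalized eigenspace operator $N(A,\lambda,\cdot)$ is a kernel of the fixed operator $(A-\lambda\mathbf{1})^n$ restricted to the relevant subspace, and that the decomposition $V = \bigoplus_i V_i$ is into $A$-invariant pieces. First I would observe that each $V_i$ being $A$-invariant forces it to be invariant under $(A-\lambda\mathbf{1})^n$ as well, so $N(A,\lambda,V_i) = V_i \cap \ker\bigl((A-\lambda\mathbf{1})^n\bigr)$ is a genuine subspace of $V_i$. The inclusion $\bigoplus_i N(A,\lambda,V_i) \subseteq N(A,\lambda,V)$ is then immediate: any $v = \sum_i v_i$ with $v_i \in N(A,\lambda,V_i)$ satisfies $v \in V$ and $(A-\lambda\mathbf{1})^n v = \sum_i (A-\lambda\mathbf{1})^n v_i = 0$, so $v \in N(A,\lambda,V)$. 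Note this direction does not even require the sum to be direct.

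For the reverse inclusion $N(A,\lambda,V) \subseteq \bigoplus_i N(A,\lambda,V_i)$, I would take $v \in N(A,\lambda,V)$ and use the direct sum decomposition to write $v = \sum_i v_i$ uniquely with $v_i \in V_i$. Applying $(A-\lambda\mathbf{1})^n$ gives $0 = (A-\lambda\mathbf{1})^n v = \sum_i (A-\lambda\mathbf{1})^n v_i$, where each summand $(A-\lambda\mathbf{1})^n v_i$ again lies in $V_i$ by $A$-invariance. The crucial point is that the uniqueness of representation in the direct sum now forces each $(A-\lambda\mathbf{1})^n v_i = 0$ individually, whence $v_i \in N(A,\lambda,V_i)$ for every $i$. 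This shows $v \in \bigoplus_i N(A,\lambda,V_i)$, completing the inclusion.

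I would expect the only real subtlety — and the single place where directness of the original sum is genuinely used — to be the step that splits the vanishing sum $\sum_i (A-\lambda\mathbf{1})^n v_i = 0$ into termwise vanishing. This is legitimate precisely because each $(A-\lambda\mathbf{1})^n v_i$ sits inside the corresponding $A$-invariant summand $V_i$, so the identity is an equation between elements of distinct summands of a direct sum and hence each component must be zero. Everything else is formal. Finally, since both inclusions hold and the spaces $N(A,\lambda,V_i) \subseteq V_i$ inherit the directness of the $V_i$, the right-hand side is automatically a direct sum, so the stated equality of subspaces follows.
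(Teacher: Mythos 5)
Your proof is correct and follows essentially the same route as the paper: the inclusion $\supseteq$ is immediate, and for the reverse inclusion you apply $(A-\lambda\mathbf{1})^n$ to the decomposition $v=\sum_i v_i$, use $A$-invariance of the $V_i$ to place each term $(A-\lambda\mathbf{1})^n v_i$ in $V_i$, and invoke directness of the sum to conclude each term vanishes. Your write-up simply spells out the steps the paper leaves implicit.
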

\begin{proof}
 The inclusion $\supset$ is clear. For the other direction, suppose that $v \in N(A,\lambda,V)$, and $v = \sum_{i} v_i$. Then
\[
 0 = (A-\lambda \ch{\mathbf{1}})^n (v) = \sum_{i} (A-\lambda \ch{\mathbf{1}})^n(v_i)
\] with $(A-\lambda \ch{\mathbf{1}})^n (v_i) \in V_i$, since $V_i$ is invariant. Now directness of the sum implies that $v_i \in N(A,\lambda,V_i)$.
\end{proof}

\begin{lemma} \label{lem:eigenspaces}
Let $A_1,\ldots,A_k \in gl(n,\mathbb{C})$ be pairwise commuting \chhf{with known spectra.}
\begin{enumerate}
 \item[(a)] For any given matrix $B$ commuting with $A_1,\ldots,A_k$ and $\mathbf{\lambda} \in \mathbb{C}^k$, the space $N(\mathbf{A},\mathbf{\lambda})$ is $B$-invariant.
 \item[(b)] Let $\Lambda_{\mathbf{A}} =\prod_{j=1}^k {\rm spec}(A_j)$. Then
\[
 \mathbb{C}^n = \bigoplus_{\mathbf{\lambda} \in \Lambda_{\mathbf{A}}} N(\mathbf{A},\mathbf{\lambda})~,
\] where some of the spaces on the right hand side may be trivial.
\item[(c)] For each $\mathbf{\lambda} \in \chhf{\Lambda_{\mathbf{A}}}$, a basis of $ N(\mathbf{A},\mathbf{\lambda})$ is computable via repeated Gauss elimination steps.
 \item[(d)] If $\mathbf{\lambda} \in \cch{\mathbb{R}^k} \cap \Lambda_{\mathbf{A}}$ and $A_1,\ldots,A_k \in gl(n,\mathbb{R})$, \chhf{ then the basis from (c) can be computed with real entries.}
 \item[(e)] If $A_1,\ldots,A_k \in gl(n,\mathbb{R})$, the mapping $\mathbb{C}^n \ni v \mapsto \overline{v}$ (componentwise complex conjugation) induces a bijection
$N(\mathbf{A},\mathbf{\lambda}) \to N(\mathbf{A},\overline{\mathbf{\lambda}})$.
\end{enumerate}
\end{lemma}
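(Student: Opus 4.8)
I will prove the five parts in order, since each builds naturally on the previous ones. For part (a), the key observation is that $B$ commutes with every $A_j$, hence with every polynomial in $A_j$, in particular with $(A_j - \lambda_j \mathbf{1})^n$. Thus if $v \in N(A_j,\lambda_j)$, then $(A_j - \lambda_j \mathbf{1})^n (Bv) = B (A_j - \lambda_j \mathbf{1})^n (v) = 0$, so $Bv \in N(A_j,\lambda_j)$. Intersecting over $j$ shows $B$ preserves $N(\mathbf{A},\mathbf{\lambda})$. For part (b), I would argue by induction on $k$. For $k=1$ this is the classical generalized-eigenspace (primary) decomposition of $\mathbb{C}^n$ relative to a single matrix $A_1$. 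For the inductive step, decompose $\mathbb{C}^n = \bigoplus_{\mathbf{\mu}} N((A_1,\ldots,A_{k-1}),\mathbf{\mu})$ using the induction hypothesis; each summand is $A_k$-invariant because $A_k$ commutes with $A_1,\ldots,A_{k-1}$, so by part (a) it preserves each $N((A_1,\ldots,A_{k-1}),\mathbf{\mu})$. Now apply the single-matrix primary decomposition to the restriction of $A_k$ to each such summand; combining with Lemma \ref{lem:decomp_gen_es} to split the generalized eigenspaces compatibly yields the full decomposition indexed by $\mathbf{\lambda} = (\mathbf{\mu},\lambda_k) \in \Lambda_{\mathbf{A}}$.

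For part (c), the point is purely computational: once $\mathrm{spec}(A_j)$ is known for each $j$, the space $N(A_j,\lambda_j)$ is the kernel of the explicitly computable matrix $(A_j - \lambda_j \mathbf{1})^n$, and the kernel of a known matrix is obtained by Gauss elimination. The intersection $N(\mathbf{A},\mathbf{\lambda}) = \bigcap_j N(A_j,\lambda_j)$ is then computed by stacking the relevant matrices and again running Gauss elimination on the combined system. Part (d) follows by the same reasoning with the additional remark that when the $A_j$ have real entries and $\lambda_j \in \mathbb{R}$, the matrices $(A_j - \lambda_j \mathbf{1})^n$ are real, so Gauss elimination stays within $\mathbb{R}$ and produces a real basis of the (real) kernel, which spans $N(\mathbf{A},\mathbf{\lambda})$ over $\mathbb{C}$.

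For part (e), I would verify that componentwise conjugation $v \mapsto \overline{v}$ is a conjugate-linear bijection of $\mathbb{C}^n$ that maps $N(\mathbf{A},\mathbf{\lambda})$ onto $N(\mathbf{A},\overline{\mathbf{\lambda}})$. Since each $A_j$ is real, $\overline{A_j v} = A_j \overline{v}$, so $\overline{(A_j - \lambda_j \mathbf{1})^n v} = (A_j - \overline{\lambda_j} \mathbf{1})^n \overline{v}$; hence $v \in N(A_j,\lambda_j)$ forces $\overline{v} \in N(A_j,\overline{\lambda_j})$. Intersecting over $j$ gives a well-defined map $N(\mathbf{A},\mathbf{\lambda}) \to N(\mathbf{A},\overline{\mathbf{\lambda}})$, and applying conjugation twice (the identity) shows it is a bijection with inverse given by conjugation in the other direction. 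None of the parts presents a serious obstacle; the only place demanding genuine care is the inductive step in part (b), where one must check that the single-matrix primary decomposition of $A_k$ restricted to each $A_k$-invariant summand meshes correctly with the index set $\Lambda_{\mathbf{A}}$, and Lemma \ref{lem:decomp_gen_es} is exactly the tool that guarantees this compatibility.
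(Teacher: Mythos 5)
Your proof is correct, and for parts (a), (b), and (e) it follows essentially the same route as the paper: (a) via commutation of $B$ with the polynomials $(A_j-\lambda_j\mathbf{1})^n$, (b) by induction on $k$, using Lemma \ref{lem:decomp_gen_es} to mesh the primary decomposition of the last matrix with the joint decomposition already obtained, and (e) by the conjugation identity $\overline{(A_j-\lambda_j\mathbf{1})^n v} = (A_j-\overline{\lambda_j}\mathbf{1})^n\,\overline{v}$. The only place you genuinely diverge is in parts (c)--(d): you compute each $N(\mathbf{A},\mathbf{\lambda})$ in one shot, as the kernel of the stacked matrix built from $(A_1-\lambda_1\mathbf{1})^n,\ldots,(A_k-\lambda_k\mathbf{1})^n$, whereas the paper computes bases for all $\mathbf{\lambda}$ simultaneously and recursively: having bases of the spaces $N(\mathbf{A}',\mathbf{\lambda}')$ for the first $k$ matrices, it notes that $A_{k+1}$ is already block diagonal with respect to that basis and then computes generalized eigenspaces of the (smaller) blocks. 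Both are legitimate ``repeated Gauss elimination'' arguments; yours is more direct and easier to state, while the paper's recursive scheme works with progressively smaller matrices and automatically produces the block form of each $A_j$ relative to the computed basis, which is precisely what the proof of Theorem \ref{thm:structure_commuting_real} uses next. Your justification of (d) --- realness of the stacked matrix forces a real kernel basis, which still spans the complex kernel over $\mathbb{C}$ --- is the same observation the paper makes, adapted to your version of (c), and is correct.
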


\begin{proof}
It is easily seen that $N(A,\lambda)$ is $B$-invariant if $[A,B]=0$. This implies (a), since the intersection of invariant spaces is invariant.

The proof of part (b) is a straightforward induction argument. The
base case $k=1$ is the first step in the Jordan decomposition of
$A_1$. In the induction step, let $A_1,\ldots,A_{k+1}$ be given. Let
$\ch{\mathbf{A}} = (A_1,\ldots,A_{k+1})$ and $\mathbf{A}' =
(A_1,\ldots,A_k)$ . The induction hypothesis yields
\[
 \mathbf{C}^n = \bigoplus_{\mathbf{\lambda}' \in \Lambda_{\mathbf{A}'}} N(\mathbf{A}',\ch{\mathbf{\lambda}'})~.
\] Note that each subspace on the right-hand side is $A_{k+1}$-invariant. Lemma \ref{lem:decomp_gen_es} implies for $\lambda \in {\rm spec}(A_{k+1})$ that
\begin{eqnarray*}
 N(A_{k+1},\lambda) & = & \bigoplus_{\mathbf{\lambda}' \in \Lambda_{\mathbf{A}'}} N(\mathbf{A}',\ch{\mathbf{\lambda}'}) \cap N(\ch{A}_{k+1},\lambda) \\
 & = & \bigoplus_{\mathbf{\lambda} \in \Lambda_{\mathbf{A}}, \lambda_{k+1}=\lambda} N(\mathbf{A},\mathbf{\lambda}).
\end{eqnarray*} Now taking the sum over $\lambda \in {\rm spec}(A_{k+1})$ yields the desired decomposition.

The bases in (c) are best computed simultaneously, and again by
induction over $k$. We start out by computing bases of the
eigenspaces $N(A_1,\lambda)$, for $\lambda \in {\rm spec}(A_1)$.
Recall that the Gauss algorithm can be employed to compute a basis
of a kernel of a given matrix. This settles the case $k=1$. In the
induction step, note that the computations so far provide bases of
all $ N(\mathbf{A}',\ch{\mathbf{\lambda}'})$, which are
$A_{k+1}$-invariant. Therefore, $A_{k+1}$ already has block diagonal
form with respect to the basis (and the blocks are computable), and
it remains to compute bases for the generalized eigenspaces of the
blocks.

Since the kernel of a real-valued matrix (viewed as an operator on $\mathbb{C}^n$) has a real-valued basis, we find that the proof of (c) yields part (d) as a byproduct.

Finally, part (e) follows from the simple observation that
\[
 (A_i - \overline{\lambda}_i \ch{\mathbf{1}})^n (\overline{v}) = 0 \Leftrightarrow \overline{(\cch{A_{i}}-\lambda_i \ch{\mathbf{1}})^n(v)} = 0~.
\]
\end{proof}

The following lemma recalls the well-known fact that pairwise
commuting matrices can be jointly upper triangularized over the
complex numbers. See, for instance, \cite[Theorem~3.7.3]{Var} for
the more general case of solvable Lie algebras. The proof therefore
emphasizes  the explicit computability of the matrix $B$ using only
standard linear algebra tools.
\begin{lemma} \label{lem:triangularize}
 Let $A_1,\ldots,A_k \in gl(n,\mathbb{C})$ be pairwise commuting. There exists an explicitly computable $B \in GL(n,\mathbb{C})$ such that, for $j=1,\ldots,k$, the matrix $BA_jB^{-1}$ is upper triangular. If ${\rm spec}(A_j) \subset \mathbb{R}$, for all $j=1,\ldots,k$, the matrix $B$ can be chosen in $GL(n,\mathbb{R})$.
\end{lemma}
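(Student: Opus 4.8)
The plan is to bootstrap from the joint generalized eigenspace decomposition already established in Lemma~\ref{lem:eigenspaces}, and to reduce the triangularization to the special case of commuting nilpotent matrices. First I would invoke Lemma~\ref{lem:eigenspaces}(b) to write $\mathbb{C}^n = \bigoplus_{\mathbf{\lambda} \in \Lambda_{\mathbf{A}}} N(\mathbf{A},\mathbf{\lambda})$, with a computable basis of each summand supplied by part~(c). With respect to the basis obtained by concatenation, every $A_j$ is block diagonal, and on the block $N(\mathbf{A},\mathbf{\lambda})$ it acts as $\lambda_j \mathbf{1} + N_{j,\mathbf{\lambda}}$, where $N_{j,\mathbf{\lambda}} = (A_j - \lambda_j \mathbf{1})|_{N(\mathbf{A},\mathbf{\lambda})}$ is nilpotent and the matrices $N_{1,\mathbf{\lambda}},\ldots,N_{k,\mathbf{\lambda}}$ still commute. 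Since a scalar summand $\lambda_j \mathbf{1}$ is unchanged under any change of basis, it is enough to triangularize each block separately, i.e.\ to prove: commuting nilpotent matrices $N_1,\ldots,N_k$ on a space $V$ admit a computable basis with respect to which all of them are \emph{strictly} upper triangular. Adding back the scalars then yields upper triangular $B A_j B^{-1}$ with the $\lambda_j$ on the diagonal.

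For the nilpotent case I would argue by induction on $\dim V$, building a complete flag. The crucial point is that the joint kernel $W = \bigcap_{j=1}^k \ker N_j$ is nonzero: $\ker N_1 \neq \{0\}$ since $N_1$ is nilpotent, $N_2$ preserves $\ker N_1$ because $[N_1,N_2]=0$, and the restriction of $N_2$ to $\ker N_1$ is again nilpotent, hence has nonzero kernel; iterating over $j$ produces a nonzero common null vector. I would pick such a $w_1 \in W$ as a kernel vector via Gauss elimination, extend it to a basis of $V$, and pass to the quotient $V / \langle w_1 \rangle$, on which the induced maps $\overline{N_j}$ remain commuting nilpotents. The induction hypothesis gives a computable strictly-upper-triangular basis $\overline{w_2},\ldots,\overline{w_m}$ of the quotient; lifting to representatives $w_2,\ldots,w_m \in V$ yields a basis $w_1,\ldots,w_m$ with $N_j w_1 = 0$ and $N_j w_i \in \langle w_1,\ldots,w_{i-1}\rangle$ for $i \geq 2$, which is exactly strict upper triangularity for every $j$. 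Each step is a kernel or quotient computation, so the resulting $B$ is explicitly computable.

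For the real statement, suppose $A_1,\ldots,A_k \in gl(n,\mathbb{R})$ with ${\rm spec}(A_j) \subset \mathbb{R}$ for all $j$. Then every $\mathbf{\lambda} \in \Lambda_{\mathbf{A}}$ lies in $\mathbb{R}^k$, so Lemma~\ref{lem:eigenspaces}(d) furnishes real bases of the blocks $N(\mathbf{A},\mathbf{\lambda})$, and the nilpotent parts $N_{j,\mathbf{\lambda}}$ are real. The flag construction preserves reality throughout, since the kernel of a real matrix has a real basis and all the quotient operations stay over $\mathbb{R}$; hence $B$ can be chosen in $GL(n,\mathbb{R})$. The only genuine content beyond bookkeeping is the nonvanishing of the joint kernel $W$ and the verification that the induction assembles a true simultaneous strict triangularization — the explicit computability then follows automatically, as every operation involved is a Gauss-elimination step. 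I expect this nilpotent-flag induction to be the main, though mild, obstacle.
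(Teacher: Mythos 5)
Your proposal is correct, but it takes a different route from the paper. The paper's proof of Lemma~\ref{lem:triangularize} is self-contained and does not invoke the generalized eigenspace decomposition at all: it first shows, by induction on $k$, that $A_1,\ldots,A_k$ have a common eigenvector (the joint eigenspace $E=\{u: A_j u=\lambda_j u,\ j\le k-1\}$ is $A_k$-invariant, so $A_k|_E$ has an eigenvector), and then inducts on $n$ by deflation: put the common eigenvector last in a basis, so each $A_j$ becomes $\left( \begin{array}{cc} \tilde{A}_j & y_j \\ 0 & \lambda_j \end{array} \right)$, and apply the induction hypothesis to the commuting $(n-1)\times(n-1)$ blocks $\tilde{A}_j$. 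You instead front-load Lemma~\ref{lem:eigenspaces}(b)--(d) to split $\mathbb{C}^n$ into joint generalized eigenspaces, reduce to commuting \emph{nilpotent} matrices, and build a flag via the joint kernel $\bigcap_j \ker N_j$ and quotients. Note that your joint-kernel step is exactly the paper's common-eigenvector argument specialized to eigenvalue $0$, and your quotient induction is the deflation induction in different clothing, so the essential engine is shared; what differs is the packaging. The paper's version is more elementary and keeps Lemma~\ref{lem:triangularize} independent of Lemma~\ref{lem:eigenspaces} (the two are then \emph{combined} in the proof of Theorem~\ref{thm:structure_commuting_real}, whose block-diagonal-plus-triangular conclusion your argument essentially reproves along the way); your version buys a cleaner computational story inside the induction (only kernel computations, no eigenvalue extraction for the deflated matrices) and directly yields the stronger normal form with constant diagonal $\lambda_j$ on each block. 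Both treatments of the real-spectrum case are sound: the paper observes that every step (eigenvectors as kernels of real matrices with real eigenvalues) stays real, while you invoke Lemma~\ref{lem:eigenspaces}(d) and the reality of kernels and quotients.
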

\begin{proof}

\ch{One first proves by induction over $k$ that $A_1,\ldots,A_k$
share a nontrivial eigenvector which is explicitly computable. This
being trivial for $k=1$, assume that $v$ is a eigenvector of
$A_1,\ldots, A_{k-1}$ with  eigenvalues $\lambda_1, \ldots,
\lambda_{k-1}$, respectively, meaning that the subspace $E=\{u: A_j
u=\lambda_j u, \,j=1,\ldots, k-1 \}$ is non-trivial. Now, $E$ is
easily seen to be invariant by $A_k$, because $[A_i,A_j]=0$, whence
$A_k$ restricted to $E$ has a non-trivial eigenvector.}

 \ch{We prove now the lemma  by induction
over $n$; the case $n=1$ being trivial. Let $b_n$ denote a common
eigenvector of  $A_1, \ldots, A_k$} and compute vectors
$v_1,\ldots,v_{n-1}$ such that $v_1,\ldots,v_{n-1},b_n$ is a basis.
Writing these vectors into a matrix yields an invertible matrix~$B_0$ such that
\[
 B_0 A_j B_0^{-1} = \left( \begin{array}{cc} \tilde{A}_j & y_j \\ 0 & \lambda_j \end{array} \right) ~,
\] with $\tilde{A}_j \in gl(n-1,\mathbb{C})$ \ch{and $y_{j} \in \mathbb{C}^{n-1}$}. The $\tilde{A}_j$ are pairwise commutative, hence the induction hypothesis yields an explicitly computable matrix $C \in GL(n-1,\mathbb{C})$ such that $C \tilde{A}_j C^{-1}$ is upper triangular. But then it is easy to see that the matrix $B =  \left( \begin{array}{cc} C & 0 \\ 0 & 1 \end{array} \right) B_0$ is as desired.

In the real-spectrum case, all of the steps in the computation of $B$ can be carried out over the reals.
\end{proof}

\begin{proof}[Proof of Theorem \ref{thm:structure_commuting_real}]
We first compute bases of the subspaces in the decomposition
\[
 \mathbb{C}^n = \bigoplus_{\mathbf{\lambda} \in \Lambda_{\mathbf{A}}} N(\mathbf{A},\mathbf{\lambda})~
\] \ch{of Lemma 7 (b).} Since the spaces are $A_j$-invariant, for all $j$, it follows that $A_j$ has block diagonal form, and the blocks of $A_i,A_j$ associated to the same subspace commute. Furthermore, Lemma \ref{lem:eigenspaces} (d) allows to choose real-valued bases whenever $\mathbf{\lambda} \in \mathbb{R}^k$, and then the corresponding blocks are real-valued as well.

We let $\Lambda_{\mathbf{A}}^0$ denote the set of \ch{$\lambda$} for
which $N(\mathbf{A},\mathbf{\lambda})$ is nontrivial. Pick a subset
$\Lambda' \subset \Lambda_{\mathbf{A}}^0$ containing precisely one
of $\mathbf{\lambda}, \overline{\mathbf{\lambda}}$, for each nonreal
$\mathbf{\lambda} \in \Lambda_{\mathbf{A}}^0$. It follows that
\[
 \mathbb{C}^n = \left( \bigoplus_{\mathbf{\lambda} \in \Lambda_{\mathbf{A}} \cap \mathbb{R}^k} N(\mathbf{A},\mathbf{\lambda})  \right) \oplus \left( \bigoplus_{\mathbf{\lambda} \in \Lambda'} N(\mathbf{A},\mathbf{\lambda})  \oplus N(\mathbf{A},\overline{\mathbf{\lambda}}) \right)~.
\]
For $\mathbf{\lambda} \in \Lambda_{\mathbf{A}} \cap \mathbb{R}^k$, we compute a real-valued basis triangularizing the blocks of the $A_j$. Since each block has a single eigenvalue (this is the whole point of using the space $N(\mathbf{A},\mathbf{\lambda})$, the triangularized blocks are therefore in  $\mathbb{R} \cdot \mathbf{1}_{d} + \mathcal{N}(d,\mathbb{R})$, with $d$ being the dimension of~$N(\mathbf{A},\mathbf{\lambda})$.

Finally, consider $\ch{\lambda} \in \ch{\Lambda'}$. Denote the block
over $ N(\mathbf{A},\mathbf{\lambda})$  associated to $A_j$ by $B_j$
($j=1,\ldots,k$). Using Lemma \ref{lem:triangularize}, we can
compute a basis $v_1,\ldots,v_d$ of $N(\mathbf{A},\mathbf{\lambda})$
triangularizing the $B_j$. Hence there exist upper triangular
matrices $C_j = (c^j_{s,r})_{s,r=1,\ldots,d}$ such that  $\ch{B_j}
v_s = \sum_{r=1}^d c^j_{s,r}v_r$, for $s=1,\ldots,d$. Since the
$B_j$ all have a single eigenvalue, the triangular matrices are in
$\mathbb{C} \cdot \mathbf{1}_{d} + \mathcal{N}(d,\mathbb{C})$.

Let $V \!=\! N\!(\mathbf{A},\mathbf{\lambda}) \oplus N\!(\mathbf{A},\overline{\mathbf{\lambda}})$.  Our next aim is to show that ${\rm Re}(v_1),{\rm Im}(v_1),\ldots,{\rm Re}(v_d), {\rm Im}(v_d)\!$ is a basis of~$V$. For this purpose, first observe that
 $v \mapsto \overline{v}$ is a conjugate-linear bijective map between $N(\mathbf{A},\mathbf{\lambda})$ and
$N(\mathbf{A},\overline{\mathbf{\lambda}})$, which implies that
$v_1,\ldots,v_d,\overline{v}_1,\ldots,\overline{v}_d$ is a basis of~$V$. But this easily implies that $({\rm Re}(v_1),{\rm
Im}(v_1),\ldots,{\rm Re}(v_d), {\rm Im}(v_d))$ is a basis as well,
and in addition real-valued. Now the fact that \ch{$B_j$} is
real-valued yields that
\begin{eqnarray*}
 \ch{B_j} {\rm Re}(v_s) = {\rm Re} \ch{(B_j v_s)} & = & \sum_{r=1}^d {\rm Re}(c^j_{s,r} v_r) \\
 & = & \sum_{r=1}^d {\rm Re}(c^j_{s,r}) {\rm Re}(v_r) - {\rm Im}(c^j_{s,r}) {\rm Im}(v_r)~,
\end{eqnarray*}
and similarly
\begin{eqnarray*}
 \ch{B_j} {\rm Im}(v_s) = \sum_{r=1}^d {\rm Im}(c^j_{s,r}) {\rm Re}(v_r) + {\rm Re}(c^j_{s,r}) {\rm Im}(v_r)~.
\end{eqnarray*}
But that means that the matrix describing the restriction of
\ch{$B_j$} with respect to the basis $({\rm Re}(v_1),{\rm
Im}(v_1),\ldots,{\rm Re}(v_d), {\rm Im}(v_d))$ is given by
$i_{\mathbb{C}}(C_j)$.

Thus, taking the \chhf{properly indexed} union of the bases constructed for each block yields a basis as postulated in Theorem \ref{thm:structure_commuting_real}.
\end{proof}

We now turn to an application of Theorem \ref{thm:structure_commuting_real}. Recall that a necessary condition for admissibility of a matrix group is that it is closed. In the following, we will consider matrix groups of the form $H = {\rm exp}(\mathfrak{h})$, where $\mathfrak{h} \subset gl(n,\mathbb{R})$ is an abelian Lie-subalgebra. Such subalgebras are constructed by simply picking any set $A_1,\ldots,A_k$ of pairwise commuting, linearly independent matrices and letting $\mathfrak{h} = {\rm span}(A_1,\ldots,A_k)$.
By construction, $H = \exp(\mathfrak{h})$ is a Lie-subgroup, and $\exp$ is a group epimorphism, when we consider $\mathfrak{h}$ with its additive group structure. However, it is not easy to decide whether $H$ will be closed and/or simply connected. The remainder of this section is devoted to proving that these questions can be decided in a computational way, using the decomposition in Theorem \ref{thm:structure_commuting_real}.

We start out with a result \ch{setting} closedness for the real
spectrum case.
\begin{lemma} \label{lem:real_spect_closed}
 Let $\mathfrak{h} \subset gl(n,\mathbb{R})$ be an abelian Lie subalgebra with the property that ${\rm spec}(X) \subset \mathbb{R}$ for all
 $X \in \mathfrak{h}$. Let $H = \exp(\mathfrak{h})$ be the exponential image. Then $H$ is a closed subgroup \ch{of $GL(n,\mathbb{R})$}, and $\exp: \mathfrak{h} \to H$ is a diffeomorphism.
\end{lemma}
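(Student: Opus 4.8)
The plan is to use Theorem~\ref{thm:structure_commuting_real} to reduce the problem to a single real-triangular block. Pick a basis $A_1,\ldots,A_k$ of $\mathfrak{h}$. Since the spectra are real, the theorem (with $\mathbb{K}_r = \mathbb{R}$ throughout) produces $B \in GL(n,\mathbb{R})$ so that, after conjugating by $B$, every $A_j$ is block diagonal with blocks $A_{j,r} \in \mathbb{R}\cdot\mathbf{1}_{d_r} + \mathcal{N}(d_r,\mathbb{R})$. Conjugation is a Lie-algebra automorphism of $gl(n,\mathbb{R})$ commuting with $\exp$, and it maps closed subgroups to closed subgroups and diffeomorphisms to diffeomorphisms, so I may assume $\mathfrak{h}$ is already in this block-diagonal form. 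Moreover $\exp$ respects the block decomposition, so it suffices to treat each block separately: the whole exponential is a diffeomorphism and its image is closed precisely when this holds for each block. Hence I reduce to the case where $\mathfrak{h}$ consists of matrices in $\mathbb{R}\cdot\mathbf{1}_d + \mathcal{N}(d,\mathbb{R})$, i.e.\ upper triangular with constant diagonal.

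For such a block, the key observation is that every $X \in \mathfrak{h}$ decomposes as $X = \lambda(X)\mathbf{1}_d + N(X)$, where $\lambda(X) \in \mathbb{R}$ is the common diagonal entry (a linear functional on $\mathfrak{h}$) and $N(X) \in \mathcal{N}(d,\mathbb{R})$ is strictly upper triangular, hence nilpotent. First I would record that these two parts commute: since $\mathfrak{h}$ is abelian and the scalar part is central, $[X,Y] = [N(X),N(Y)]$, and because $\mathfrak{h}$ is abelian this commutator vanishes, so all the nilpotent parts $N(X)$ commute with one another. Therefore $\exp(X) = e^{\lambda(X)}\exp(N(X))$, with $\exp(N(X))$ a unipotent upper triangular matrix. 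The map $N \mapsto \exp(N)$ is a polynomial diffeomorphism from the abelian algebra $\mathfrak{n} = \{N(X): X \in \mathfrak{h}\}$ of commuting nilpotent matrices onto a closed unipotent subgroup, with polynomial inverse $g \mapsto \log(g)$ (the logarithm series terminates on unipotent matrices). This is the standard fact that $\exp$ is an analytic diffeomorphism on a nilpotent Lie algebra.

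To finish, I would show $\exp\colon \mathfrak{h} \to H$ is a bijective immersion with closed image. Injectivity follows because from $\exp(X) = \exp(Y)$ one reads off $e^{\lambda(X)} = e^{\lambda(Y)}$ (the determinant, up to a factor, or by comparing diagonal entries) forcing $\lambda(X) = \lambda(Y)$, and then $\exp(N(X)) = \exp(N(Y))$ forces $N(X) = N(Y)$ by injectivity of $\exp$ on nilpotents; hence $X = Y$. For smoothness of the inverse, given $h \in H$ I recover $\lambda = \log|(\text{diagonal entry of } h)|$ and then $N = \log(e^{-\lambda}h)$, both depending smoothly (indeed polynomially, after the logarithm) on $h$; since $\mathfrak{h}$ is a linear subspace, the pair $(\lambda, N)$ determines a unique $X \in \mathfrak{h}$ with continuous dependence, giving a smooth inverse and hence a diffeomorphism onto its image. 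For closedness, I would take a sequence $\exp(X_m) \to g$ in $GL(n,\mathbb{R})$; the diagonal entries converge to a positive number, so $\lambda(X_m)$ converges, and then $\exp(N(X_m))$ converges to a unipotent matrix $u$, whose logarithm lies in $\mathfrak{n}$ by closedness of $\mathfrak{n}$ as a linear subspace; assembling these shows $g \in H$.

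The main obstacle I anticipate is the \emph{closedness} argument rather than the diffeomorphism property. The delicate point is that convergence of $\exp(X_m)$ must be shown to force convergence of the parameters $X_m$ in $\mathfrak{h}$, and this requires knowing that the scalar part $\lambda$ and the nilpotent part $N$ are each recovered continuously and that the limiting nilpotent part genuinely lies in the linear subspace $\mathfrak{n}$. The fact that $\mathfrak{n}$ is a closed subspace of the (finite-dimensional) space of strictly upper triangular matrices, together with the homeomorphism between $\mathfrak{n}$ and the unipotent group it generates, is what makes this work; the real-spectrum hypothesis is essential here precisely because it guarantees, via the block structure, that no compact (rotational) directions appear that could cause the image to wind around and fail to be closed.
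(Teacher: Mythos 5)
Your proposal is correct in substance, and its core mechanism is the same as the paper's: split each element into a scalar (diagonal) part and a commuting nilpotent part, use that $\exp$ is a polynomial diffeomorphism from strictly upper triangular matrices onto unipotent ones, recover both parts smoothly from the product, and exploit that $\mathfrak{h}$ is a closed linear subspace. The organizational difference is that the paper never reduces to a single block: it enlarges $\mathfrak{h}$ to $\mathfrak{g} = \mathfrak{d} + \mathfrak{n}$, where $\mathfrak{d}$ is the space of block-scalar diagonal matrices and $\mathfrak{n}$ the proper upper triangular ones, proves in one stroke that $\exp$ maps $\mathfrak{g}$ diffeomorphically onto the closed group $DN$ (using that $D$ and $N$ commute and that the $D$-factor can be read off smoothly by discarding off-diagonal entries), and then obtains the lemma by restricting this single global diffeomorphism to the closed subspace $\mathfrak{h} \subset \mathfrak{g}$. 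This sidesteps the one step in your write-up that is asserted rather than proved: the reduction ``it suffices to treat each block separately.'' That claim needs an argument, because $\mathfrak{h}$ is generally \emph{not} the direct sum of its block projections $p_r(\mathfrak{h})$; the blocks of an element of $\mathfrak{h}$ are coupled, so $H$ is only a graph-like proper subgroup of $\prod_r \exp\bigl(p_r(\mathfrak{h})\bigr)$, and closedness of each blockwise image does not by itself yield closedness of $H$. The repair is exactly the paper's final move: the blockwise exponentials assemble into a diffeomorphism $\Phi$ of $\bigoplus_r p_r(\mathfrak{h})$ (or of all of $\bigoplus_r \bigl(\mathbb{R}\cdot\mathbf{1}_{d_r} + \mathcal{N}(d_r,\mathbb{R})\bigr)$) onto a closed subgroup of $GL(n,\mathbb{R})$, and then $H = \Phi(\mathfrak{h})$ with $\mathfrak{h}$ closed in the domain, so $H$ is closed and $\exp|_{\mathfrak{h}} = \Phi|_{\mathfrak{h}}$ is a diffeomorphism onto it. With that gluing observation added, your single-block analysis (injectivity via the diagonal entries, polynomial $\log$ on unipotents, and convergence of both parts forcing convergence of the $X_m$ inside the closed subspace $\mathfrak{h}$) goes through and yields the lemma.
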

\begin{proof}
Let $A_1,\ldots,A_k$ denote a basis of $\mathfrak{h}$. After choosing the right coordinates, we may assume (\ref{eqn:blockst_real}) with $B$ being the identity matrix. Since all $A_i$ have real spectrum, it follows that $\mathbb{K}_r = \mathbb{R}$, for all $r$.

Now consider the Lie algebra \ch{$\mathfrak{g}$} generated by
$A_1,\ldots,A_k$ and the block diagonal matrices with scalar
multiples of the identity on the diagonal (with block sizes matching
those of $A_1,\ldots,A_k$). By construction, $\mathfrak{g} =
\mathfrak{d} + \mathfrak{n}$, where $\mathfrak{d}$ consists of
diagonal matrices and $\mathfrak{n}$~consists of proper upper
triangular matrices. Now the matrix exponential is  a diffeomorphism
of $\mathfrak{d}$ and $\mathfrak{n}$ onto closed matrix groups $D$
and $N$, respectively (this is clear for~$\mathfrak{d}$; for
$\mathfrak{n}$, see~\cite[I.2.7]{HiNe}). \ch{By the assumptions on
$D$ and $N$, any element of $D$ commutes with any element of $N$.
Also, there is a \chhf{smooth} inverse of the embedding $D \to DN$,
simply by setting the off-diagonal \chhf{entries} to zero. These facts imply
that the canonical map $D \times N \to DN$ is a diffeomorphism, that
$DN$ is closed and
 that $\exp$ maps
$\mathfrak{g}$ diffeomorphically onto $DN$.}

But then $\mathfrak{h} \subset \mathfrak{g}$ is mapped diffeomorphically onto $H$, and $H$ is closed as the image of the closed subspace $\mathfrak{h}$.
\end{proof}

As it turns out, the closedness of an abelian matrix group will depend on the imaginary parts on the diagonal. The precise formulation of the conditions require some additional terminology, which is provided by
the following lemma.
\begin{lemma} \label{lem:def_h0}
Let $\mathfrak{h}= \span(A_1,\ldots,A_k) \subset gl(n,\mathbb{R})$ be commutative. After an explicitly computable change of coordinates, there exist $\ell$ and $d_r, \mathbb{K}_r$, $r=1,\ldots \ell$  such that
\[
\mathfrak{h} \subset \mathcal{A} = \left\{ \left( \begin{array}{cccc} B_{1} & 0 & \ldots & 0 \\ 0 &  B_{2} & 0
\ldots & 0 \\ 0 & 0 & \ddots & 0 \\ 0 & \ldots & \ldots & B_{\ell} \end{array}
\right) : B_r \in \mathbb{K}_r \cdot \mathbf{1}_{d_r} + \mathcal{N}(d_r,\mathbb{K}_r) \right\}~.
\]
Then $\mathcal{A}$ is an associative subalgebra of $gl(n,\mathbb{R})$.
 Denote by $P_{\mathcal{E}}: \mathcal{A} \to \mathcal{A}$ the map that discards the imaginary parts on the diagonal, i.e.,
\begin{eqnarray*}
\lefteqn{ P_{\mathcal{E}} :  \left( \begin{array}{cccc} \alpha_1 \cdot \mathbf{1}_{d_1} + N_1 & 0 & \ldots & 0 \\ 0 &  \alpha_2 \cdot \mathbf{1}_{d_2} + N_2 & 0
\ldots & 0 \\ 0 & 0 & \ddots & 0 \\ 0 & \ldots & \ldots & \alpha_\ell \cdot \mathbf{1}_{d_\ell} + N_\ell \end{array}
\right) } \\ &  \mapsto &   \left( \begin{array}{cccc} {\rm Re}(\alpha_1) \cdot \mathbf{1}_{d_1} + N_1 & 0 & \ldots & 0 \\ 0 &  {\rm Re} (\alpha_2) \cdot \mathbf{1}_{d_2} + N_2 & 0
\ldots & 0 \\ 0 & 0 & \ddots & 0 \\ 0 & \ldots & \ldots & {\rm Re}(\alpha_\ell) \cdot \mathbf{1}_{d_\ell} + N_\ell \end{array}
\right)  ~.\end{eqnarray*}
Let $\mathcal{E}$ denote the range of $P_{\mathcal{E}}$. Furthermore, let $P_{\mathcal{I}} = {\rm Id}_{\mathcal{A}}-P_{\mathcal{E}}$, and denote its range by~$\mathcal{I}$.
Then $\mathfrak{h}_0 = \mathfrak{h} \cap \mathcal{I}$ is explicitly computable, and there exists an explicitly computable complement $\mathfrak{h}_1$ of $\mathfrak{h}_0$ in $\mathfrak{h}$. On $\mathfrak{h}_1$, the map $P_{\mathcal{E}}$ is injective.
\end{lemma}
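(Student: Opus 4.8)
The first assertion is essentially a restatement of Theorem \ref{thm:structure_commuting_real}. The plan is to apply that theorem to the generators $A_1,\ldots,A_k$, obtaining an explicitly computable $B \in GL(n,\mathbb{R})$ (by repeated Gauss elimination, once the spectra are known) such that every $BA_jB^{-1}$ is block diagonal with blocks in $\mathbb{K}_r \cdot \mathbf{1}_{d_r} + \mathcal{N}(d_r,\mathbb{K}_r)$. After replacing $\mathfrak{h}$ by $B\mathfrak{h}B^{-1}$, each generator lies in the linear space $\mathcal{A}$, and since $\mathcal{A}$ is a linear subspace, so does $\mathfrak{h} = \span(A_1,\ldots,A_k)$.

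Next I would verify that $\mathcal{A}$ is an associative (unital) subalgebra. As $\mathcal{A}$ is block diagonal, sums and products are computed blockwise, so it suffices to treat a single block. For $\mathbb{K}_r = \mathbb{R}$ the blocks are the upper triangular matrices with constant diagonal, and the identity $(\alpha\mathbf{1}_{d_r} + N)(\beta\mathbf{1}_{d_r} + M) = \alpha\beta\,\mathbf{1}_{d_r} + (\alpha M + \beta N + NM)$ gives closure under multiplication, since $\alpha M + \beta N + NM \in \mathcal{N}(d_r,\mathbb{R})$; closure under addition and membership of $\mathbf{1}_{d_r}$ are immediate. For $\mathbb{K}_r = \mathbb{C}$ the same computation holds over $\mathbb{C}$, and the claim transfers to the real picture because $i_{\mathbb{C}}$ is an $\mathbb{R}$-algebra monomorphism.

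For the remaining assertions the decisive observation is that $P_{\mathcal{E}}$ is a well-defined $\mathbb{R}$-linear idempotent on $\mathcal{A}$. Well-definedness follows from the uniqueness of the decomposition $B_r = \alpha_r\mathbf{1}_{d_r} + N_r$ with $\alpha_r \in \mathbb{K}_r$ and $N_r \in \mathcal{N}(d_r,\mathbb{K}_r)$ (the scalar $\alpha_r$ is the common diagonal entry). Idempotency is clear, since a second application leaves the already-real diagonal scalars and the nilpotent parts untouched. Together with $P_{\mathcal{I}} = \Id_{\mathcal{A}} - P_{\mathcal{E}}$ this yields a direct sum $\mathcal{A} = \mathcal{E} \oplus \mathcal{I}$ with $\ker P_{\mathcal{E}} = \mathcal{I}$: indeed $P_{\mathcal{E}}(X)=0$ forces ${\rm Re}(\alpha_r)=0$ and $N_r = 0$ in every block, i.e. $X \in \mathcal{I}$.

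This reduces everything to linear algebra over $\mathbb{R}$. The key identity is $\mathfrak{h}_0 = \mathfrak{h} \cap \mathcal{I} = \ker\!\left(P_{\mathcal{E}}|_{\mathfrak{h}}\right)$, so $\mathfrak{h}_0$ is obtained by expressing the linear map $P_{\mathcal{E}}|_{\mathfrak{h}}$ as a matrix in the coordinates supplied by the generators and computing its kernel by Gauss elimination. A complement $\mathfrak{h}_1$ is then produced, again by Gauss elimination, by extending a computed basis of $\mathfrak{h}_0$ to one of $\mathfrak{h}$ and spanning the new vectors. Injectivity of $P_{\mathcal{E}}$ on $\mathfrak{h}_1$ is automatic: if $X \in \mathfrak{h}_1$ with $P_{\mathcal{E}}(X)=0$, then $X \in \ker(P_{\mathcal{E}}|_{\mathfrak{h}}) = \mathfrak{h}_0$, whence $X \in \mathfrak{h}_0 \cap \mathfrak{h}_1 = \{0\}$. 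I do not anticipate a genuine obstacle; the only points needing care are confirming that $P_{\mathcal{E}}$ and $P_{\mathcal{I}}$ map $\mathcal{A}$ into itself and are complementary projections, in particular handling the $i_{\mathbb{C}}$-embedded $\mathbb{C}$-blocks correctly, and keeping each computability claim explicitly tied to Gauss elimination, in line with the rest of the section.
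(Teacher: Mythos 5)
Your proposal is correct and follows essentially the same route as the paper: the paper's own proof consists precisely of the observation that $\mathfrak{h}_0 = \ker\bigl(P_{\mathcal{E}}|_{\mathfrak{h}}\bigr)$, so that a basis of $\mathfrak{h}_0$ and of a complement $\mathfrak{h}_1$ can be computed by Gauss elimination, with injectivity on the complement being immediate. Your additional verifications (that the coordinate change comes from Theorem~\ref{thm:structure_commuting_real}, that $\mathcal{A}$ is closed under blockwise products, and that $P_{\mathcal{E}}$ is a well-defined idempotent with $\mathcal{I}=\ker P_{\mathcal{E}}$) are details the paper leaves implicit in the statement, and they are carried out correctly.
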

\begin{proof}
 $\mathfrak{h}_0$ is the kernel of $P_{\mathcal{E}}|_{\mathfrak{h}}$, and thus one can compute a basis of this space, together with a basis of the complement, using the Gauss algorithm. Injectivity of $P_{\mathcal{E}}$ on the complement is clear.
\end{proof}

For the formulation of the next lemma, recall that an element $g$ of
a topological group~$G$ is called {\em a compact element} if the
closed subgroup generated by $g$ is \ch{compact}. Given a sequence
$(g_k)_{k\in \mathbb{N}} \subset G$, the statement $g_k \to \infty$
for $k \to \infty$ means that every compact subset $K \subset G$
contains at most finitely many $g_k$.
\begin{lemma} \label{lem:H0_H1}
 Let $\mathfrak{h} \subset gl(n,\mathbb{R})$ be commutative, and let $\mathfrak{h} = \mathfrak{h}_0 \oplus \mathfrak{h}_1$ denote the decomposition from Lemma \ref{lem:def_h0}. Let $H_i = \exp(\mathfrak{h}_i)$.
\begin{enumerate}
\item[(a)]  $H_1$ is closed and simply connected.
\item[(b)]  $h \in H$ is a compact element of $GL(n,\mathbb{R})$ if and only if $h \in H_0$.
\end{enumerate}

\end{lemma}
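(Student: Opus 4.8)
The plan is to exploit the central position of the subspace $\mathcal{I}$ inside the associative algebra $\mathcal{A}$ of Lemma \ref{lem:def_h0}. First I would record three structural facts. Since every block of a matrix in $\mathcal{I}$ is a (purely imaginary) scalar multiple of the identity, $\mathcal{I}$ is central in $\mathcal{A}$; in particular $P_{\mathcal{E}}(X)$ and $P_{\mathcal{I}}(X)$ commute for every $X\in\mathfrak{h}$, so that
\[
\exp(X)=\exp(P_{\mathcal{E}}(X))\,\exp(P_{\mathcal{I}}(X)).
\]
Next, $P_{\mathcal{E}}(\mathfrak{h})$ is again an abelian subalgebra whose members have real spectrum (each block eigenvalue $\alpha_r$ is replaced by ${\rm Re}(\alpha_r)$), so Lemma \ref{lem:real_spect_closed} applies and $\exp$ maps $P_{\mathcal{E}}(\mathfrak{h})$ diffeomorphically onto a closed subgroup. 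Finally, under $i_{\mathbb{C}}$ every matrix in $\mathcal{I}$ is block-diagonal with skew-symmetric blocks, so $\exp(\mathcal{I})$ consists of orthogonal matrices and is relatively compact; since $\mathfrak{h}_0=\mathfrak{h}\cap\mathcal{I}$, the group $H_0\subset\exp(\mathcal{I})$ is relatively compact.

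For part (a) I would first show $\exp|_{\mathfrak{h}_1}$ is injective. If $\exp(X)=\exp(Y)$ with $X,Y\in\mathfrak{h}_1$, comparing the $r$-th diagonal blocks $e^{\alpha_r}\exp(N_r)$ yields $e^{\alpha_r(X)}=e^{\alpha_r(Y)}$ and $\exp(N_r(X))=\exp(N_r(Y))$; taking moduli forces ${\rm Re}\,\alpha_r(X)={\rm Re}\,\alpha_r(Y)$, and injectivity of $\exp$ on nilpotents forces $N_r(X)=N_r(Y)$. Hence $P_{\mathcal{E}}(X)=P_{\mathcal{E}}(Y)$, so $X-Y\in\mathfrak{h}_1$ lies in $\mathcal{I}$, i.e.\ in $\mathfrak{h}\cap\mathcal{I}=\mathfrak{h}_0$; as $\mathfrak{h}_0\cap\mathfrak{h}_1=\{0\}$ we get $X=Y$. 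I would then prove that $\exp|_{\mathfrak{h}_1}$ is proper: if $\exp(X_m)$ stays in a compact set, boundedness of the block determinants keeps ${\rm Re}\,\alpha_r(X_m)$ bounded, and boundedness of the unipotent factors $\exp(N_r(X_m))$ keeps $N_r(X_m)$ bounded; thus $P_{\mathcal{E}}(X_m)$ is bounded, and since $P_{\mathcal{E}}$ is a linear isomorphism on $\mathfrak{h}_1$, so is $(X_m)$. A proper continuous injection into the locally compact group $GL(n,\mathbb{R})$ is a closed embedding, so $H_1$ is closed and homeomorphic to the vector space $\mathfrak{h}_1$, hence simply connected.

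For part (b), the implication $h\in H_0\Rightarrow h$ compact is immediate from the relative compactness of $H_0\subset\exp(\mathcal{I})$. Conversely, given a compact element $h=\exp(X)$, write $X=X_0+X_1$ with $X_i\in\mathfrak{h}_i$, so $h=h_0h_1$ with commuting factors $h_i=\exp(X_i)$. Since the powers $h_0^m$ lie in the relatively compact set $H_0$, the orbit $\{h_1^m\}=\{h_0^{-m}h^m\}$ is relatively compact, so $h_1$ is a compact element of $GL(n,\mathbb{R})$. The closure of $\langle h_1\rangle$ is then a compact subgroup of the closed group $H_1$; but by part (a) $H_1\cong\mathbb{R}^{\dim\mathfrak{h}_1}$ has no nontrivial compact subgroup, so $h_1=\mathbf{1}$, i.e.\ $X_1=0$ and $h\in H_0$.

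The main obstacle is the closedness/properness step in part (a): one must separate the three kinds of block data --- real parts, imaginary diagonal parts, and nilpotent parts --- and observe that it is precisely the imaginary diagonal parts, which wrap around periodically under $\exp$, that obstruct properness, and that these have been removed into $\mathfrak{h}_0$. The clean recovery of ${\rm Re}\,\alpha_r$ and $N_r$ from $\exp(X)$, combined with the injectivity of $P_{\mathcal{E}}$ on $\mathfrak{h}_1$, is what makes the argument succeed.
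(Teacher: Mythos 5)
Your proof is correct, and it rests on the same structural pillars as the paper's: the centrality of $\mathcal{I}$ in $\mathcal{A}$, the inclusion $\exp(\mathcal{I})\subset SO(n)$, and the injectivity of $P_{\mathcal{E}}$ on $\mathfrak{h}_1$. The difference lies in how the crucial properness/closedness step of part (a) is executed. The paper never dissects the matrix $\exp(X)$: it shows that $X_k\to\infty$ in $\mathfrak{h}_1$ forces $\exp(X_k)\to\infty$ in $GL(n,\mathbb{R})$ by writing $X_k=Y_k+Z_k$ with $Y_k\in\mathcal{I}$, $Z_k=P_{\mathcal{E}}(X_k)$, invoking Lemma \ref{lem:real_spect_closed} (applied to $\mathfrak{h}_2=P_{\mathcal{E}}(\mathfrak{h}_1)$, whose exponential is a diffeomorphism onto a closed group) to conclude $\exp(Z_k)\to\infty$, and then using the compact sandwich $\exp(Z_k)=\exp(-Y_k)\exp(X_k)\in SO(n)\,K$; injectivity of $\exp|_{\mathfrak{h}_1}$ then falls out for free (the kernel is a compact, hence trivial, subgroup of a vector space), and the converse of part (b) is obtained by applying the same divergence argument to the powers $h^k=\exp\bigl(k(X_0+X_1)\bigr)$. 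You instead recover the Lie-algebra data directly from the group element --- ${\rm Re}\,\alpha_r$ from block determinants, $N_r$ from the polynomial logarithm of the unipotent factor --- which makes both injectivity and properness self-contained and in fact never uses your cited Lemma \ref{lem:real_spect_closed}; your part (b) converse, via ``a compact subgroup of $H_1\cong\mathbb{R}^{\dim\mathfrak{h}_1}$ is trivial,'' is an equivalent group-theoretic repackaging of the paper's divergence argument. One small point to make explicit in your properness step: you need the block determinants bounded \emph{away from zero}, not just bounded above; this follows because $\exp(X_m)$ stays in a compact subset of $GL(n,\mathbb{R})$ (so the total determinant is bounded below) or, alternatively, because the inverses $\exp(-X_m)$ also stay in a compact set. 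In sum, the paper's route buys brevity by reusing its earlier lemma; yours buys an explicit, quantitative handle on how $\exp(X)$ determines $P_{\mathcal{E}}(X)$, which settles injectivity and properness in one stroke.
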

\begin{proof}
First of all, note that $\exp(\mathcal{I}) \subset SO(n)$, \ch{the
compact special orthogonal group}, and thus in particular
$\exp(\mathfrak{h}_0) \subset SO(n)$, which shows one direction of
part (b), and will be crucial for part (a).

For the proof of part (a), we first prove that whenever $X_k \to
\infty$ in $\mathfrak{h}_1$, then $\exp(X_k) \to \infty$ in
$GL(n,\mathbb{R})$. For this purpose, let $\mathfrak{h}_2 =
P_{\mathcal{E}}(\mathfrak{h}_1)$ and $H_2 = \exp(\mathfrak{h}_2)$.
Then $X_k = Y_k + Z_k$, with $Y_k \in \mathcal{I}$ and $Z_k =
P_{\mathcal{E}}(X_k) \in \mathfrak{h}_2$.  By injectivity of
\ch{$P_{\mathcal{E}}$} on $\mathfrak{h}_1$, it follows that $Z_k \to
\infty$. By Lemma \ref{lem:real_spect_closed}, $\exp: \mathfrak{h}_2
\to H_2$ is a diffeomorphism onto the closed subgroup $H_2$, and
thus $\exp(Z_k) \to \infty$ in $GL(n,\mathbb{R})$. Hence, if $K
\subset GL(n,\mathbb{R})$ is compact, then so is $K' = SO(n) K$, and
it can contain only finitely many $\exp(Z_k)$. On the other hand,
$\exp(X_k) \in K$ implies $\exp(Z_k) = \exp(-Y_k) \exp(X_k) \in K'$,
thus $K$ contains only finitely many $\exp(X_k)$. Hence $\exp(X_k)
\to \infty$. This implies in particular that the kernel of $\exp$
must be compact, hence trivial. \ch{So $H_1$ is simply connected.}

Furthermore, it follows that $H_1$ is closed: Assume that $\exp(X_k) \to g \in GL(n,\mathbb{R})$. This implies that the sequence $(X_k)_{k \in \mathbb{N}}$ does not converge to infinity, hence it contains a bounded, and thus finally a convergent subsequence $X_{n_k} \to X_0$, and $X_0 \in \mathfrak{h}_1$ since linear subspaces are closed. But then continuity of $\exp$ yields
$\exp(X_0) = g$, and thus $g \in H_1$.

 For the missing direction of part (b), write $h = \exp(X_0+X_1)$, with $X_i \in \mathfrak{h}_i$. If $X_1 \not=0$, the argument proving part (a) shows that $h^k = \exp(k(X_0+X_1)) \to \infty$. In particular, $h$~is not a compact element.
\end{proof}

The following theorem reveals the chief purpose of the introduction of $\mathfrak{h}_0,\mathfrak{h}_1$: Closedness only depends on $\mathfrak{h}_0$.
\begin{thm} \label{thm:char_closed_abel_part}
 Let $\mathfrak{h} \subset gl(n,\mathbb{R})$ be commutative, and let $\mathfrak{h} = \mathfrak{h}_0 \oplus \mathfrak{h}_1$ denote the decomposition from Lemma \ref{lem:def_h0}. Then $H = \exp(\mathfrak{h})$ is closed if and only if $H_0 = \exp(\mathfrak{h}_0)$ is compact.
\end{thm}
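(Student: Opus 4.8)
The plan is to exploit the decomposition $H = H_0 H_1$ together with the two structural facts already established for the individual pieces. Since $\mathfrak{h} = \mathfrak{h}_0 \oplus \mathfrak{h}_1$ with all matrices commuting, one has $\exp(X_0 + X_1) = \exp(X_0)\exp(X_1)$ for $X_i \in \mathfrak{h}_i$, so that $H = H_0 H_1$ as a set. Recall from Lemma \ref{lem:H0_H1}(a) that $H_1$ is closed, and from the opening observation in the proof of Lemma \ref{lem:H0_H1} that $H_0 = \exp(\mathfrak{h}_0) \subset SO(n)$; in particular $H_0$ is compact if and only if it is closed, being a subgroup of the compact group $SO(n)$. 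This reduces the theorem to relating closedness of the product $H_0 H_1$ to closedness of the compact-group factor $H_0$.

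For the direction ``$H_0$ compact $\Rightarrow H$ closed'', I would use the elementary fact that the product of a compact subgroup and a closed subgroup is closed. Concretely, if $h_n = k_n \ell_n \to g$ with $k_n \in H_0$ and $\ell_n \in H_1$, then compactness of $H_0$ lets me pass to a subsequence with $k_n \to k \in H_0$; then $\ell_n = k_n^{-1} h_n \to k^{-1} g$, and closedness of $H_1$ forces $k^{-1} g \in H_1$, whence $g = k(k^{-1}g) \in H_0 H_1 = H$. Thus $H$ is closed.

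The substantive direction is ``$H$ closed $\Rightarrow H_0$ compact'', and here the crucial input is the characterization of compact elements in Lemma \ref{lem:H0_H1}(b). Assuming $H$ closed, I would pass to the closure $\overline{H_0}$, taken in $GL(n,\mathbb{R})$ or equivalently in the compact group $SO(n)$ since $H_0 \subset SO(n)$. Then $\overline{H_0} \subset \overline{H} = H$ is a compact subgroup, so each of its elements generates a relatively compact subgroup and is therefore a compact element of $GL(n,\mathbb{R})$. By Lemma \ref{lem:H0_H1}(b), the compact elements of $H$ are exactly those lying in $H_0$, so $\overline{H_0} \subset H_0$, i.e. $H_0 = \overline{H_0}$ is closed and hence compact.

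In effect the main obstacle has already been dealt with upstream: once Lemma \ref{lem:H0_H1}(b) (and the inclusion $H_0 \subset SO(n)$) are available, the theorem is short, the only point needing care being the bookkeeping that $\overline{H_0}$ genuinely lands inside $H$ and consists of compact elements. I therefore expect the proof at the level of the theorem to be routine, with all the real work concentrated in the preceding lemmas.
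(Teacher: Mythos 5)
Your proof is correct, and it follows the paper's strategy almost exactly: both directions rest on the decomposition $H = H_0H_1$, Lemma \ref{lem:H0_H1}, and the inclusion $H_0 \subset SO(n)$; the forward direction (compact times closed is closed) is the paper's verbatim. The one place you genuinely diverge is the last step of the converse. The paper argues: since $H$ is closed, its subgroup of compact elements is closed by \cite[Theorem 9.10]{HeRo}, this subgroup equals $H_0$ by Lemma \ref{lem:H0_H1}(b), and a closed subgroup of $SO(n)$ is compact. You instead avoid the external citation entirely: you form $\overline{H_0}$, which is automatically compact as a closed subgroup of $SO(n)$, observe that $\overline{H_0} \subset \overline{H} = H$ and that every element of the compact group $\overline{H_0}$ is a compact element of $GL(n,\mathbb{R})$, and then apply Lemma \ref{lem:H0_H1}(b) to conclude $\overline{H_0} \subset H_0$, forcing $H_0 = \overline{H_0}$ to be compact. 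This is a nice self-contained replacement for the Hewitt--Ross reference; what the citation buys the paper is generality (compact elements of any closed LCA group form a closed subgroup), whereas your argument exploits the specific fact that $H_0$ sits inside the compact group $SO(n)$ --- which is available here anyway and is what the paper itself uses to pass from ``closed'' to ``compact'' at the end. Both routes are valid; yours is arguably the more economical in this setting.
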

\begin{proof}
By Lemma \ref{lem:H0_H1}, $H_1$ is closed. Thus, if $H_0$ is
compact, then $H$ is the product of a closed and a compact subset of
$GL(n,\mathbb{R})$, hence closed. On the other hand, if $H$ is
closed, then its subgroup of compact elements is closed as well, and
compactness of an element relative to $H$ is the same as compactness
relative to $GL(n,\mathbb{R})$. This subgroup coincides with $H_0$
by Lemma \ref{lem:H0_H1}, and it is closed by \cite[Theorem 9.10]{HeRo}. In summary: $H_0$ is closed, hence compact, \ch{since it is
contained in $SO(n)$}.
\end{proof}

Hence, we need methods to identify closed subgroups of the torus group $\mathbb{T}^d = \{ z \in \mathbb{C}^d : |z_1|= |z_2| = \ldots = |z_d|=1 \}$. In the following, we identify the Lie algebra of $\mathbb{T}^d$ with $\mathbb{R}^d$, and $\exp: \mathbb{R}^d \to \mathbb{T}^d$ is given by $\exp(x_1,\ldots,x_d) = (e^{ix_1},\ldots,e^{ix_d})$.
\begin{lemma}
 Let $\varphi  = ( \varphi_1,\ldots,\varphi_d) \in \mathbb{R}^d$, and denote by $H$ the closure of $\exp(\ch{\mathbb{R} \varphi})$. Let
$\mathfrak{h} \subset \mathbb{R}^d$ denote the Lie algebra of $H$,
i.e.
\[
 \mathfrak{h} = \{ x \in \mathbb{R}^d : \exp(x) \in H \}~.
\]
Suppose that $1 \le i_0 \le d$ denotes the smallest index of an irrational entry of $\varphi$. Then $\mathfrak{h}$~contains a vector
whose first nonzero component is at position $i_0$.
\end{lemma}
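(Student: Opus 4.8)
The plan is to translate the whole statement into the language of the integer relations satisfied by $\varphi$, and then to exhibit an explicit vector. Set
\[
 L = \{ n \in \mathbb{Z}^d : \langle n, \varphi \rangle = 0 \}~,
\]
the group of integer relations among the entries of $\varphi$. A character $\chi_n$ of $\mathbb{T}^d$, $n \in \mathbb{Z}^d$, satisfies $\chi_n(\exp(t\varphi)) = e^{i t \langle n, \varphi \rangle} \equiv 1$ for all $t$ precisely when $n \in L$. Since a closed subgroup of a torus is the intersection of the kernels of the characters vanishing on it (Pontryagin duality), this yields the description $H = \{ z \in \mathbb{T}^d : \chi_n(z) = 1 \text{ for all } n \in L \}$. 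Pulling this back through $\exp$ gives the working description
\[
 \mathfrak{h} = \{ x \in \mathbb{R}^d : \langle n, x \rangle \in 2\pi\mathbb{Z} \text{ for all } n \in L \}~.
\]
This identification is the conceptual heart of the argument; everything after it is elementary.

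Next I would write down a candidate vector. Let $\psi = (0,\ldots,0,\varphi_{i_0},\varphi_{i_0+1},\ldots,\varphi_d)$ be obtained from $\varphi$ by zeroing the first $i_0-1$ entries, all of which are rational by the choice of $i_0$. For $n \in L$ the defining relation $\langle n,\varphi\rangle = 0$ gives
\[
 \langle n, \psi \rangle = \sum_{j \ge i_0} n_j \varphi_j = - \sum_{j < i_0} n_j \varphi_j~,
\]
and the right-hand side lies in $\tfrac{1}{q}\mathbb{Z}$, where $q$ is a common denominator of the rationals $\varphi_1,\ldots,\varphi_{i_0-1}$ (if $i_0=1$ the sum is empty and the expression is $0$). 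Hence the homomorphism $\Phi : L \to \mathbb{Q}$, $\Phi(n) = \langle n, \psi \rangle$, has image contained in the infinite cyclic group $\tfrac{1}{q}\mathbb{Z}$, so $\Phi(L) = \alpha\mathbb{Z}$ for some rational $\alpha \ge 0$.

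Finally I would rescale. If $\alpha = 0$, then $\langle n, \psi \rangle = 0 \in 2\pi\mathbb{Z}$ for every $n \in L$, so $\psi \in \mathfrak{h}$. If $\alpha > 0$, then $x := \tfrac{2\pi}{\alpha}\psi$ satisfies $\langle n, x \rangle \in \tfrac{2\pi}{\alpha}\alpha\mathbb{Z} = 2\pi\mathbb{Z}$ for all $n \in L$, so $x \in \mathfrak{h}$. In either case the resulting vector has its first $i_0-1$ components equal to $0$ and its $i_0$-th component equal to a nonzero multiple of $\varphi_{i_0}$; the latter is nonzero because $\varphi_{i_0}$ is irrational. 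Thus the first nonzero component sits at position $i_0$, as required.

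I expect the main obstacle to be the bookkeeping in the first paragraph: justifying the passage from the closure $H$ to the relation group $L$ and then to the arithmetic description of $\mathfrak{h}$ in terms of the condition $\langle n, x \rangle \in 2\pi\mathbb{Z}$. Once that description is in hand, the splitting off of the rational prefix, the cyclicity of $\Phi(L)$, and the single rescaling are all routine.
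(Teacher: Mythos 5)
Your identification of $H$ via Pontryagin duality, $H=\bigcap_{n\in L}\ker\chi_n$ with $L=\{n\in\mathbb{Z}^d:\langle n,\varphi\rangle=0\}$, is correct, but the set you then work with is the wrong object, and this is where the proof breaks. You take the displayed formula literally and identify $\mathfrak{h}$ with the full preimage $\exp^{-1}(H)=\{x:\langle n,x\rangle\in 2\pi\mathbb{Z}\ \forall n\in L\}$. Under that reading the lemma is vacuous: $2\pi e_{i_0}$ (with $e_j$ the standard basis vectors) always lies in $\exp^{-1}(H)$, since $\exp(2\pi e_{i_0})$ is the identity. What the lemma really concerns — and what its application in Lemma \ref{lemma:char_closed_abel} requires, where the produced vector must lie in ${\rm span}(v_1,\ldots,v_k)$ and elements like $2\pi e_\ell$ give no contradiction — is the genuine (tangent) Lie algebra of $H$, i.e. $\{x:\exp(tx)\in H\ \text{for all}\ t\in\mathbb{R}\}$. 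In your dual language this is the subspace $L^{\perp}=\{x:\langle n,x\rangle=0\ \forall n\in L\}$: letting $t$ vary and using continuity forces the pairings to vanish, not merely to lie in $2\pi\mathbb{Z}$. This is also the object the paper's own proof produces (a tangent vector of the compact group $H_0$).

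Your constructed vector passes the test $\langle n,x\rangle\in2\pi\mathbb{Z}$ but, in the case $\alpha>0$, genuinely fails the test $\langle n,x\rangle=0$, so it need not be tangent to $H$. Concretely, take $\varphi=(\tfrac12,\sqrt2,\sqrt2-\tfrac12)$, so $i_0=2$ and $L=\mathbb{Z}\cdot(1,-1,1)$. Then $\psi=(0,\sqrt2,\sqrt2-\tfrac12)$, $\Phi\bigl((1,-1,1)\bigr)=-\tfrac12$, hence $\alpha=\tfrac12$ and $x=4\pi\psi$; but $\langle(1,-1,1),x\rangle=-2\pi\neq 0$, so $\exp(tx)\notin H$ for non-integer $t$ and $x$ is not in the Lie algebra of $H$ (which here is the plane $x_1-x_2+x_3=0$, containing the correct witness $(0,1,1)$). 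The good news is that your framework is salvageable with a rationality argument in place of the rescaling trick: if no vector of $L^{\perp}\cap\bigl(\{0\}^{i_0-1}\times\mathbb{R}^{d-i_0+1}\bigr)$ had nonzero $i_0$-th entry, then by taking orthogonal complements $e_{i_0}\in\mathrm{span}_{\mathbb{R}}\bigl(L\cup\{e_1,\ldots,e_{i_0-1}\}\bigr)$; since all vectors involved are rational, $e_{i_0}$ would then be a $\mathbb{Q}$-linear combination of them, and pairing such a representation with $\varphi$ gives $\varphi_{i_0}=\sum_{j<i_0}b_j\varphi_j\in\mathbb{Q}$ with $b_j\in\mathbb{Q}$, contradicting the irrationality of $\varphi_{i_0}$. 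That repaired argument is a genuine alternative to the paper's proof, which instead projects onto the first $i_0$ coordinates and uses density of $\{e^{2\pi i k\varphi_{i_0}}\}$ together with compactness of $H_0$.
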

\begin{proof}
\chhf{Since replacing $\varphi$ by any nonzero scalar multiple yields the same one-parameter group, we may assume that the first $i_0-1$ entries are integers.}
 Denote by $H_0$ the closure of the cyclic subgroup  $ \{\exp(2 \pi k \varphi) : k \in \mathbb{Z} \} \ch{\subset \mathbb{T}^d}$. Then, if
 $p: \mathbb{T}^d \to \mathbb{T}^{i_0}$ denotes the projection onto the first $i_0$ components, \chhf{we claim that
 $p(H_0) = \{ 1 \}^{i_0-1} \times \mathbb{T}$.  Indeed, by choice of $\varphi$, $i_0$ and $H_0$  we have
\begin{equation} \label{eqn:incl_pH0}
 \cch{\{ 1 \}^{i_0-1}} \times \{ e^{2 \pi i k \varphi_{i_0}} : k \in \mathbb{Z} \} \subset p(H_0) \subset  \cch{\{ 1 \}^{i_0-1}} \times \mathbb{T}~.
\end{equation}}%
Furthermore, $p(H_0)$ is the continuous image of a compact set, and thus closed. By choice of $i_0$,
$ \{ e^{2 \pi i k \varphi_{i_0}} : k \in \mathbb{Z} \} \subset
\mathbb{T}$ is dense. Hence the first inclusion of
(\ref{eqn:incl_pH0}) and closedness of~\ch{$p(H_0)$} implies
\chhf{$p(H_0)=\{ 1 \}^{i_0-1} \times \mathbb{T}$.}

In particular, $H_0$ is a closed infinite subgroup of $\mathbb{T}^d$, and therefore it is a Lie subgroup of positive dimension. If $\mathfrak{h}_0$ denotes its Lie algebra, then (\ref{eqn:incl_pH0}) implies that $\mathfrak{h}_0 \subset \{ 0  \}^{i_0-1} \times \mathbb{R}^{d+1-i_0}$. On the other hand,  $\mathfrak{h}_0 \not\subset  \{ 0  \}^{i_0} \times \mathbb{R}^{d-i_0}$, since $p(H_0)$ is nontrivial. But this shows the statement.
\end{proof}

With this lemma, closedness of a Lie-subgroup of $\mathbb{T}^d$ is easily
 decided. We first recall some notions connected to Gauss elimination: Let a family of vectors
 $v_j = (v_j(1),\ldots,v_j(d)) \in \mathbb{R}^d$, $j=1,\ldots,k$ be given. We say that the vectors are in {\em Gauss-Jordan \cch{row} echelon form} if there exist indices $1 \le i_1 < i_2 < \ldots < i_j \le d$ such that the following properties hold, for all $j=1,\ldots,k$:
\[
 v_j(r) = 0 ~,~\mbox{ for } r < i_j~,~ v_j(i_j) = 1~,~\mbox{and}~v_j(i_\ell) = 0~,~ \mbox{ for } \ell \not=j~.
\] For any finite family of vectors, a system of vectors in Gauss-Jordan row echelon form spanning the same space can be computed by first computing the echelon form using Gauss elimination, normalizing the resulting vectors to have unit pivot elements, and using the pivot element of each vector to eliminate the corresponding entries in the other vectors.
\begin{lemma} \label{lemma:char_closed_abel}
 Let $v_1,\ldots, v_k \in \mathbb{R}^d$ be given in Gauss-Jordan row echelon form, and let $\mathfrak{h} = {\rm span}(v_j: j \le k )$.
 Then $H = \exp(\mathfrak{h})$ is \ch{compact} iff $v_j \in \mathbb{Q}^d$, for all $1 \le j \le k$.
\end{lemma}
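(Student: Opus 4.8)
The plan is to first reduce the statement to a closedness question. Since $\mathbb{T}^d$ is compact and Hausdorff, a subgroup is compact if and only if it is closed, so it suffices to characterize closedness of $H=\exp(\mathfrak h)$. For the easy implication I would assume every $v_j\in\mathbb Q^d$. As $\exp$ is a continuous homomorphism of the additive group $\mathbb R^d$, one has $\exp(\mathfrak h)=\prod_{j=1}^k\exp(\mathbb R v_j)$, and each factor is a closed circle: writing $v_j$ over a common integer denominator shows that $t\mapsto\exp(tv_j)$ is periodic, so its image is the continuous image of a compact interval. A product of finitely many compact subgroups of the abelian group $\mathbb T^d$ is again the continuous image of a compact set, hence compact. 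This settles ``rational $\Rightarrow$ compact''.

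For the converse I would argue by contraposition: assuming some $v_{j_0}$ has an irrational entry, I produce an element of $\overline H\setminus H$. Let $i_0$ be the smallest index of an irrational entry of $\varphi:=v_{j_0}$; note $i_0$ cannot be one of the pivot columns $i_1<\dots<i_k$, since all pivot entries are $0$ or $1$. Applying the preceding lemma to the single vector $\varphi$ yields a vector $w$ in the Lie algebra of $\overline{\exp(\mathbb R\varphi)}$ whose first nonzero component sits at position $i_0$. Because $\mathbb R\varphi\subseteq\mathfrak h$ gives $\overline{\exp(\mathbb R\varphi)}\subseteq\overline H$, this $w$ lies in $\mathfrak g:=\mathrm{Lie}(\overline H)$, so that $\exp(\mathbb R w)\subseteq\overline H$. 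The crucial point is that Gauss--Jordan normalization forces $w\notin\mathfrak h$: for any nonzero $\sum_\ell c_\ell v_\ell\in\mathfrak h$, taking $\ell_0$ minimal with $c_{\ell_0}\neq0$, the first nonzero entry occurs exactly at the pivot column $i_{\ell_0}$ (with value $c_{\ell_0}$), since every $v_\ell$ with $\ell\ge\ell_0$ vanishes before $i_{\ell_0}$ and the remaining pivot columns are cleared. Thus first-nonzero positions of elements of $\mathfrak h$ are always pivot columns, whereas $w$ has its first nonzero entry at the non-pivot column $i_0$; hence $w\notin\mathfrak h$.

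It remains to convert $w\in\mathfrak g\setminus\mathfrak h$ into a genuine failure of closedness, and this is the step I expect to carry the real weight. I would show $\exp(\mathbb R w)\not\subseteq\exp(\mathfrak h)$ by a cardinality argument: were it contained, then $\mathbb R w\subseteq\mathfrak h+(2\pi\mathbb Z)^d$, so the image of the line $\mathbb R w$ in the quotient $\mathbb R^d/\mathfrak h$ --- a true $1$-dimensional, hence uncountable, set since $w\notin\mathfrak h$ --- would sit inside the image of the countable lattice $(2\pi\mathbb Z)^d$, a contradiction. Consequently there is $t_0$ with $\exp(t_0w)\notin H$, while $\exp(t_0w)\in\exp(\mathbb R w)\subseteq\overline H$, so $H\neq\overline H$ and $H$ is neither closed nor compact. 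The main obstacle is precisely this last passage: deducing non-closedness from the mere existence of one extra Lie-algebra direction $w$ requires ruling out that $\exp(\mathfrak h)$ already exhausts $\exp(\mathfrak g)$ despite $\mathfrak h$ being lower-dimensional. The countability of the kernel lattice is what makes this work, and it is where the irrationality hypothesis ultimately bites.
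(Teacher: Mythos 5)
Your proof is correct, and its skeleton is the paper's: the rational-implies-compact direction is argued identically (each $\exp(\mathbb{R}v_j)$ is a circle, and a finite product of compact subgroups is compact), and the converse rests on the same two ingredients, namely the preceding lemma on closures of one-parameter subgroups of $\mathbb{T}^d$ and the observation that Gauss--Jordan form forces the first nonzero entry of every nonzero element of $\mathfrak{h}$ to lie in a pivot column. Where you genuinely diverge is the endgame. The paper assumes $H$ compact and simply asserts that the vector supplied by the preceding lemma lies in $\mathfrak{h}$, which gives an immediate contradiction with the pivot observation; this silently uses the identification $\mathrm{Lie}(H)=\mathfrak{h}$ for closed $H=\exp(\mathfrak{h})$ (true, but requiring its own small argument, e.g.\ that a subtorus of dimension larger than $\dim\mathfrak{h}$ cannot be the image of $\mathfrak{h}$ under the smooth map $\exp$). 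You avoid any such identification: you only place your vector $w$ in $\mathrm{Lie}(\overline{H})$, prove $w\notin\mathfrak{h}$ by the pivot argument, and then convert this into an explicit element of $\overline{H}\setminus H$ via the countability argument ($\exp(\mathbb{R}w)\subseteq\exp(\mathfrak{h})$ would force the uncountable image of the line $\mathbb{R}w$ in $\mathbb{R}^d/\mathfrak{h}$ into the countable image of the lattice $(2\pi\mathbb{Z})^d$). The cost is one extra step; the gain is a fully explicit proof of non-closedness that makes no appeal to the Lie-group structure or dimension of $H$ itself, and in particular it fills in a step that the paper's own proof leaves implicit.
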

\begin{proof}
 First assume that $v_j \!\in\! \mathbb{Q}^d$. Then $\exp(2 \pi k v_j) \!=\! (1,\ldots,1)$ for a suitable integer~$k \!>\!0$, showing that $\exp(\mathbb{R} v_j)$ is compact. If this holds for all $j$, then $H$ is compact.

Conversely, assume that $H$ is \ch{compact}, but some $v_j$ has an
irrational entry. Let $\ell$ be the smallest index with $v_j(\ell)
\not\in \mathbb{Q}$. Then, by the previous lemma, $\mathfrak{h}$
contains a vector $\varphi$ whose first nonzero component is at
position $\ell$. On the other hand, since $v_j(\ell) \not=0$,  the
fact that the vectors are in Gauss-Jordan row echelon form implies
that $\ell \not\in \{ i_j : j=1,\ldots,k \}$. But clearly, that is a
contradiction to $\varphi \in \mathfrak{h}$.
\end{proof}

To summarize:
\begin{thm} \label{thm:char_closed_abel}
Let $\mathfrak{h} \subset gl(n,\mathbb{R})$ be an abelian Lie algebra. Assume that for some system of generators of
$\mathfrak{h}$ the spectra are known. Then the question whether $H =\exp(\mathfrak{h})$ is closed can be decided by repeated applications of Gauss elimination.
\end{thm}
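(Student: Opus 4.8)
The plan is to read this as a summary theorem and assemble the decision procedure out of the results already proved in this section, so the proof is really a matter of tracking which step is carried out by Gauss elimination. First I would fix a system of generators $A_1,\ldots,A_k$ of $\mathfrak{h}$ whose spectra are known and invoke Theorem \ref{thm:structure_commuting_real}: since the spectra are available, it produces by repeated Gauss elimination a matrix $B \in GL(n,\mathbb{R})$ bringing the $A_j$ simultaneously into the block-diagonal form (\ref{eqn:blockst_real}). After conjugating by $B$ we are exactly in the setting of Lemma \ref{lem:def_h0}, so I would use it to compute, again by Gauss, the decomposition $\mathfrak{h} = \mathfrak{h}_0 \oplus \mathfrak{h}_1$ with $\mathfrak{h}_0 = \mathfrak{h} \cap \mathcal{I}$ as the kernel of $P_{\mathcal{E}}|_{\mathfrak{h}}$.

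Next I would apply Theorem \ref{thm:char_closed_abel_part} to replace the question ``is $H = \exp(\mathfrak{h})$ closed?'' by the equivalent question ``is $H_0 = \exp(\mathfrak{h}_0)$ compact?''. This is the crucial reduction, since it concentrates all the difficulty of closedness into the piece $\mathfrak{h}_0 \subset \mathcal{I}$, and by Lemma \ref{lem:H0_H1} we already know $\exp(\mathcal{I}) \subset SO(n)$, so that $H_0$ lives inside a compact torus.

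Then I would make the torus structure explicit in order to invoke Lemma \ref{lemma:char_closed_abel}. An element of $\mathcal{I}$ is block diagonal with blocks of the form $i\beta_r \cdot \mathbf{1}_{d_r}$, one purely imaginary scalar $\beta_r$ per complex block (the real blocks contributing nothing). Recording these scalars identifies $\mathcal{I}$ with $\mathbb{R}^m$, where $m$ is the number of complex blocks, and under this identification $\exp|_{\mathcal{I}}$ becomes precisely the torus exponential $\mathbb{R}^m \to \mathbb{T}^m$, $(\beta_1,\ldots,\beta_m) \mapsto (e^{i\beta_1},\ldots,e^{i\beta_m})$, since $\exp(i\beta_r \cdot \mathbf{1}_{d_r})$ is just a diagonal rotation through the angle $\beta_r$. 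Transporting $\mathfrak{h}_0$ through this identification yields a concrete subspace of $\mathbb{R}^m$, of which I would compute a Gauss-Jordan row echelon basis $v_1,\ldots,v_q$. By Lemma \ref{lemma:char_closed_abel}, $H_0$ is compact exactly when every $v_j$ lies in $\mathbb{Q}^m$. Chaining the resulting equivalences (Theorem \ref{thm:char_closed_abel_part}, the torus identification, and Lemma \ref{lemma:char_closed_abel}) then decides closedness of $H$.

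The only genuine subtlety, and the step I expect to be the main obstacle to state cleanly, is the final rationality test coupled with the torus identification: one must be sure that the coordinates $\beta_r$ extracted from the spectra, after the $\mathbb{R}$-linear operations that Gauss elimination performs to produce the echelon vectors $v_j$, can actually be tested for membership in $\mathbb{Q}$. This is unproblematic provided the spectra, and hence the $\beta_r$, are supplied in an exact symbolic form closed under the rational operations used by Gauss elimination; under that standing assumption the entire procedure reduces to finitely many Gauss eliminations followed by a rationality check, which is what the theorem asserts.
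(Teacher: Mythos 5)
Your proposal is correct and follows exactly the paper's own route: the paper's proof is a three-line recipe that computes the block diagonalization of Theorem \ref{thm:structure_commuting_real}, extracts the decomposition $\mathfrak{h} = \mathfrak{h}_0 \oplus \mathfrak{h}_1$ of Lemma \ref{lem:def_h0}, and checks a Gauss--Jordan basis of $\mathfrak{h}_0$ for irrational entries, implicitly chaining Theorem \ref{thm:char_closed_abel_part} and Lemma \ref{lemma:char_closed_abel} just as you do. If anything, your write-up is more complete than the paper's, since you make explicit both the identification of $\exp|_{\mathcal{I}}$ with the torus exponential and the standing assumption that the spectral data are given in exact form so that the final rationality test is meaningful.
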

\begin{proof}
 First compute the decomposition in Theorem \ref{thm:structure_commuting_real}. From this decomposition, determine the direct sum decomposition $\mathfrak{h} = \mathfrak{h}_0 \oplus \mathfrak{h}_1$ from Lemma \ref{lem:def_h0}. Then compute a basis of  $\mathfrak{h}_0$ in Gauss-Jordan echelon form and check for irrational entries.
\end{proof}

\section{\ch{From discrete to continuous admissible matrix groups}}

\label{sect:cont_adm_groups}

A full classification of admissible abelian matrix groups, say up to
conjugacy, does not seem feasible. As the discussion in \cite{Fu2} shows,
already the problem of classifying, up to conjugacy, all connected abelian matrix groups with
open orbits is equivalent to the classification of commutative algebras with
unity of the same dimension. For the latter problem, no solution is in sight. Moreover, we are interested
also in non-connected, even discrete abelian groups, and here the diversity is even larger.
In view of this fact, it seems reasonable to first restrict attention to suitable
subclasses, which are particularly easy to handle, and then to find ways how to pass
from the subclass to general abelian matrix groups. For the first part, connected groups seem particularly
well suited, since the exponential map allows to systematically translate group-theoretical questions to problems in linear algebra. For the passage to other closed abelian matrix groups, the role of cocompactness will be crucial. The following simple lemma provides the key.
\begin{lemma} \label{lem:cocomp}
 Let $H_0, H_1$ be closed \ch{abelian} matrix groups, with $H_0 \subset H_1$ cocompact, \cch{i.e. $H_{1}/H_{0}$ is compact}. Then $H_0$ is
  admissible iff $H_1$ has the same property.
\end{lemma}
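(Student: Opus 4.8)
The plan is to bypass the orbit-theoretic conditions (i)--(iii) of Theorem~\ref{thm:char_adm} altogether and argue directly through the Calderon condition~(\ref{eqn:Calderon}), since cocompactness is exactly the hypothesis that makes Weil's integration formula available. The first step is to recast admissibility in a shape that is symmetric in $\psi$: setting $g = |\widehat{\psi}|^2$ and using Plancherel, a closed matrix group $H$ is admissible if and only if there is a nonnegative $g \in {\rm L}^1(\widehat{\mathbb{R}^n})$ with
\[
 (T_H g)(\omega) := \int_H g(\omega h)\, dh = 1 \quad \text{for a.e. } \omega .
\]
Indeed, given such a $g$ one recovers an admissible vector by putting $\widehat{\psi} = \sqrt{g}$, so that $\|\psi\|_2^2 = \|g\|_1 < \infty$ and (\ref{eqn:Calderon}) holds; conversely $g = |\widehat{\psi}|^2$ works for any admissible $\psi$.

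Next I would compare the operators $T_{H_0}$ and $T_{H_1}$. Since $H_0, H_1$ are abelian, hence unimodular, closed subgroups of $GL(n,\mathbb{R})$ with $H_1/H_0$ compact, Weil's formula gives, for every nonnegative measurable $g$,
\[
 (T_{H_1} g)(\omega) = \int_{H_1/H_0} (T_{H_0} g)(\omega\, \dot s)\, d\dot s ,
\]
where the integrand is well defined because $T_{H_0} g$ is $H_0$-invariant, and $V := \int_{H_1/H_0} d\dot s \in (0,\infty)$. This one identity already yields the implication $H_0 \text{ admissible} \Rightarrow H_1 \text{ admissible}$: if $T_{H_0} g_0 = 1$ a.e., then $T_{H_1}(V^{-1} g_0) = 1$ a.e., the passage of the a.e.\ statement through the fibre integral using only that the linear dual action preserves Lebesgue null sets.

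For the converse I would transport mass across the compact quotient. Fix a Borel section $\sigma : H_1/H_0 \to H_1$ of the principal bundle $H_1 \to H_1/H_0$ whose image $F$ is relatively compact; such a section exists because $H_1/H_0$ is a compact second countable Lie group. Given $g_1$ with $T_{H_1} g_1 = 1$ a.e., define
\[
 g_0(\omega) = \int_{H_1/H_0} g_1\big(\omega\, \sigma(\dot s)\big)\, d\dot s .
\]
A Tonelli interchange, together with commutativity (to swap $\sigma(\dot s)$ past elements of $H_0$) and the $H_0$-invariance of $T_{H_0} g_1$, shows $T_{H_0} g_0 = T_{H_1} g_1$ as $[0,\infty]$-valued functions, hence $T_{H_0} g_0 = 1$ a.e.; and $g_0 \in {\rm L}^1$ because $\|g_0\|_1 = \|g_1\|_1 \int_{H_1/H_0} |{\rm det}\,\sigma(\dot s)|^{-1}\, d\dot s$, which is finite precisely since $F$ is relatively compact. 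Thus $H_0$ is admissible, completing the equivalence.

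I expect the main difficulty to be bookkeeping rather than conceptual: pinning down the compatible normalizations of the Haar measures on $H_0$, $H_1$ and $H_1/H_0$ so that Weil's formula holds with the stated constant, producing a Borel section with relatively compact image, and justifying the step from ``a.e.\ $\omega$'' to the fibered statement via quasi-invariance of the dual action. It is worth emphasizing that this route never touches the stabilizer or fundamental-domain conditions of Theorem~\ref{thm:char_adm}: cocompactness enters solely through the finiteness of $V$ and the relative compactness of $F$, which is exactly why the statement is ``simple''.
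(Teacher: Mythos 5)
Your proposal is correct and essentially reproduces the paper's own argument: the paper likewise splits the Haar integral over $H_1$ along $H_0$-cosets using a measurable fundamental domain $C$ of $H_1/H_0$ (which plays exactly the role of the image $F$ of your Borel section), and its converse direction defines $\widehat{\psi_0}(x) = \left( \int_C |\widehat{\psi}_1(c^{-1}x)|^2\, dc \right)^{1/2}$, which is precisely the averaging you perform to build $g_0$. The only cosmetic difference is in the forward direction, where the paper computes $\| V_\psi f \|_{{\rm L}^2(G_1)}^2$ directly and invokes the isometry of $V_\psi$ on ${\rm L}^2(G_0)$ together with unitarity of $\pi$, whereas you phrase both directions through the Calderon condition; by Plancherel these are the same computation, resting on the same Weil-type coset decomposition.
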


\begin{proof}
 First suppose that $H_0$ is admissible. Let $\psi \in {\rm L}^2(\mathbb{R}^n)$ be an admissible
 vector, and define $G_i = \RR^n \rtimes H_i$. Pick a measurable fundamental domain $C$ of $H_1/H_0$, and endow it with the Haar measure of $H_1/H_0$. Then, with suitable normalizations, we have for $f \in {\rm L}^2(\RR^n)$ that
 \begin{eqnarray*}
  \| V_\psi(f) \|_{{\rm L}^2(G_1)}^2 & = & \int_{H_1} \int_{\ch{\RR^n}} |\langle f, \pi(h,x) \psi \rangle|^2 |{\rm det}(h)|^{-1} dx \frac{dh}{|\det{h}|} \\
  & = & \int_C \int_{H_0} \int_{\ch{\RR^n}} |\langle f, \pi(c h_0,x) \psi \rangle|^2 dx \frac{dh_0}{|\det(h_0)|}  \frac{dc}{|\det{c}|} \\
   & = & \int_{C} \| V_\psi(\pi(c^{-1}) f) \|_{{\rm L}^2(G_0)}^2   \frac{dc}{|\det{c}|} \\
   & = & \| f \|_2^2~ \int_{C}  \frac{dc}{|\det{c}|}~.
 \end{eqnarray*}
  Hence  $\psi$ is $H_1$-admissible up to normalization.

  For the converse, let $\psi_1$ denote an $H_1$-admissible function.
   Using the measurable fundamental domain $C$ from above, we define
  \[
   \widehat{\psi_0} (x) = \left( \int_C |\widehat{\psi}_1(c^{-1}x)|^2 \ch{dc} \right)^{1/2}~~.
  \]
  Then a straightforward computation (using that \ch{$H_{1}$} is commutative) shows that $\psi_0 \in {\rm L}^2(\ch{\RR^n})$, and that it fulfills
  the Calderon condition (\ref{eqn:Calderon}) for $H_0$.
\end{proof}

The lemma motivates the following definition: Given $H_1,H_0 \subset
{\rm GL}(n,\RR)$, we write $H_0 \sim H_1$ iff $H_0 \cap H_1$ is
cocompact in both $H_0$ and $H_1$. Then the previous lemma implies
that this relation is compatible with admissibility: If  $H_0 \sim
H_1$, then $H_0$ is admissible iff $H_1$ is. Furthermore, $\sim$ is
reflexive and symmetric, but not transitive. We therefore introduce
its transitive hull, denoted by $\approx$; \ch{that is, $H_0 \approx
H_1$ means that $H_0, H_1$ can be connected by a chain of groups
related by $\sim$.} This is an equivalence relation which is
compatible with admissibility.

We now employ structure theory of compactly generated LCA
groups to prove the following proposition, which shows that each equivalence class modulo $\approx$ contains a connected, simply connected representative $H_c$. Thus, in principle, the discussion may be restricted to this subclass. Note however that the construction of $H_c$ is not particularly explicit, and thus of somewhat limited use for the discussion of concrete examples.  The subsequent remark
\ref{rem:matrix_log} provides a more direct construction of $H_c$ for the subclass of discrete matrix groups with positive spectrum.
\begin{prop} \label{prop:struct_ab}
Let $H < {\rm GL}(n,\RR)$ be closed and abelian.
\begin{enumerate}
\item[(a)] $H \cong \RR^l \times \ZZ^m \times \TT^k \times F$,
with $l,m,k \in \NN_0$, and $F$ denotes a finite abelian group. The
isomorphism is topological. \item[(b)] There exists $H_c \sim H$
with $H_c \cong \RR^j$, and $j=l+m$. Moreover, there exists $H_d
\subset H \cap H_c$, cocompact in both, with $H_d \cong \ZZ^j$. As a
consequence of $H_c \sim H \sim H_d$, $H$ is admissible iff
\ch{$H_d$} is.
\end{enumerate}
\end{prop}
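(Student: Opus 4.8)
The plan is to read (a) off the structure theory of compactly generated LCA groups, and then to realize the two representatives in (b) inside $GL(n,\RR)$ by taking real matrix logarithms of a cocompact lattice, using the machinery of Section~\ref{sect:comm_matrix} to guarantee closedness and simple connectivity.

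For part (a) I would first note that $H$, being closed in $GL(n,\RR)$, is a Lie group, hence a $\sigma$-compact locally compact abelian group whose identity component $H^0$ is open; as a connected abelian Lie group $H^0 \cong \RR^l \times \TT^k$. The one nonroutine point is that $H$ is \emph{compactly generated}, equivalently that the discrete component group $H/H^0$ is finitely generated. I would obtain this by passing to the Zariski closure $\overline{H}$, an abelian real algebraic group; having only finitely many Euclidean components, $\overline{H}(\RR)$ is compactly generated, and $H$ is compactly generated as a closed subgroup of it (closed subgroups of compactly generated LCA groups are again compactly generated). The structure theorem then gives a topological isomorphism $H \cong \RR^l \times \ZZ^m \times K$ with $K$ compact abelian; since $K$ is a compact abelian Lie group, $K \cong \TT^k \times F$ with $F$ finite, which is (a).

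For part (b) I would work throughout inside $GL(n,\RR)$, since both $\sim$ and admissibility are defined there. Let $C \cong \TT^k \times F$ be the compact subgroup of all compact elements of $H$, so that $H/C \cong \RR^l \times \ZZ^m$. I build the lattice $H_d$ from $l+m$ commuting elements: a basis of the subspace $\mathfrak{h}_1$ of the Lie algebra of $H^0$ from Lemma~\ref{lem:def_h0} (so that $\exp$ of its integer span is a lattice in the vector group $\exp(\mathfrak{h}_1)\cong\RR^l$), together with representatives $g_1,\dots,g_m$ of the $\ZZ^m$-generators, each replaced by a suitable power so that it admits a real logarithm $X_i$; this costs only a finite index and hence does not affect cocompactness. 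To produce $H_c$ I set $\mathfrak{a}=\span(X_1,\dots,X_{l+m})$ and $H_c=\exp(\mathfrak{a})$. The decisive observation is that the real parts $P_{\mathcal{E}}(X_i)$ are $\RR$-linearly independent: a nontrivial relation among them would exhibit a nontrivial product of powers of the generators with vanishing $\mathcal{E}$-part, i.e.\ a compact element, contradicting independence in $H/C\cong\RR^l\times\ZZ^m$. This independence forces $\mathfrak{a}\cap\mathcal{I}=\{0\}$ and $\dim\mathfrak{a}=l+m$, so by Lemma~\ref{lem:H0_H1}(a) (equivalently Theorem~\ref{thm:char_closed_abel}) $H_c=\exp(\mathfrak{a})$ is closed and simply connected, $H_c\cong\RR^{l+m}$, and $H_d=\langle\exp(X_i)\rangle$ is a cocompact lattice in it.

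Finally $H_d\subset H\cap H_c$ by construction; it is cocompact in $H_c$ as a full lattice, and cocompact in $H$ because $H/H_d$ is an extension of the compact group $\TT^l\times\prod_i\ZZ/p_i\ZZ$ by the compact group $C$. Hence $H_c\sim H\sim H_d$, and Lemma~\ref{lem:cocomp} yields that $H$ is admissible iff $H_d$ is. The main obstacle is precisely the construction of $H_c$: establishing the independence of the $P_{\mathcal{E}}(X_i)$ and matching $\dim\mathfrak{a}$ with $l+m$, which is the step where the abstract identification of $C$, $l$ and $m$ from part (a) is needed. This is why $H_c$ is not explicit in general, in contrast to the positive-spectrum case, where $C$ is trivial and principal matrix logarithms are canonical.
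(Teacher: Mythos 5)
Your part (a) is fine (your Zariski-closure route to compact generation is a legitimate substitute for the paper's citation of W\"ustner), and your overall plan for (b) -- matrix logarithms plus the $\mathcal{E}\oplus\mathcal{I}$ decomposition of Section \ref{sect:comm_matrix} -- is a genuinely different and more concrete route than the paper's (which embeds $H_0$ into the unit group $\mathcal{A}^\times\cong\RR^s\times\TT^t\times F$ of the algebra ${\rm span}(H_0)$ and extends $\RR^l\times\ZZ^m\hookrightarrow\mathcal{A}^\times$ to $\RR^{l+m}$). However, your ``decisive observation'' has a genuine gap. You claim that an $\RR$-linear relation $\sum_i c_i P_{\mathcal{E}}(X_i)+\sum_j b_j P_{\mathcal{E}}(B_j)=0$ ``would exhibit a nontrivial product of powers of the generators'' lying in the compact part. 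That inference is only valid when the $c_i$ are integers: real coefficients do not produce group elements. So your argument proves $\QQ$-linear independence (equivalently, freeness of the $\exp(X_i)$ modulo $C$), not $\RR$-linear independence, and the two are very different: $\log 2$ and $\sqrt2\log 2$ are $\QQ$-independent but $\RR$-dependent. Concretely, take $H=\langle 2e^{i\alpha},\,2^{\sqrt2}e^{i\beta}\rangle\subset\CC^\times\subset GL(2,\RR)$: this group is free of rank $2$ with trivial compact-element subgroup, so your argument applies verbatim, yet $P_{\mathcal{E}}(X_1)=\log 2\cdot\mathbf{1}$ and $P_{\mathcal{E}}(X_2)=\sqrt2\log2\cdot\mathbf{1}$ are $\RR$-dependent and $\mathfrak{a}=\span(X_1,X_2)$ meets $\mathcal{I}$ nontrivially, so $\exp(\mathfrak{a})$ is not closed. (This $H$ is of course not closed, so it does not contradict the proposition -- but your proof of the independence step never uses closedness of $H$, which is exactly why it cannot be correct.) Closedness is precisely what bridges $\QQ$- to $\RR$-independence, and supplying that bridge is the real work: one must show that the subgroup $P_{\mathcal{E}}(\mathfrak{h}_1)+\sum_i\ZZ\,P_{\mathcal{E}}(X_i)$ is closed in $\mathcal{E}$ (e.g.\ by a properness argument: if it had a bounded sequence with parameters tending to infinity, then, since the discarded $\mathcal{I}$-parts exponentiate into the compact torus $\exp(\mathcal{I})$, one would get infinitely many elements of $H$ in a compact set whose images in $H/C\cong\RR^l\times\ZZ^m$ tend to infinity, contradicting closedness of $H$), and then invoke the structure of closed subgroups of a vector group \cite[Theorem 9.12]{HeRo}. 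This is exactly the tool, and exactly the point, at which the paper's own proof uses closedness: there, $\psi(H_0)$ closed in $\RR^s\times\TT^t$ projects (along the compact factor) onto a closed subgroup of $\RR^s$, and \cite[Theorem 9.12]{HeRo} then yields the $\RR$-linear independence.

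A secondary gap: you never address why the logarithms $X_i$ can be chosen to commute with one another and with $\mathfrak{h}_1$. Existence of \emph{some} real logarithm of $g_i^2$ is not enough, since logarithms are highly non-unique and a bad choice destroys commutativity, without which $\mathfrak{a}$ is not an abelian Lie algebra and $\exp(\mathfrak{a})$ is not even a group. This is fixable -- after a joint block diagonalization of all generators and $\mathfrak{h}$ via Theorem \ref{thm:structure_commuting_real}, take blockwise principal logarithms, which are scalars plus polynomials in the respective blocks and hence commute with everything in sight -- but it must be said. With these two repairs your construction does go through and would even make $H_c$ somewhat more explicit than the paper's (whose $H_c$ depends on an abstract isomorphism $\psi$ of $\mathcal{A}^\times$), in the spirit of Remark \ref{rem:matrix_log}; as written, however, the central independence claim is unproved and the stated justification for it is false.
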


\begin{proof}
For part (a) confer \cite{Wue}. The result basically follows from
the fact that closed abelian subgroups of $GL(n,\RR)$ are
compactly generated, and a structure theorem for such groups
contained in \cite{HeRo}.

For the construction of \ch{$H_{c}$}, we first get rid of the
compact part of $H$, i.e., we let $H_0$ denote the subgroup
corresponding to $\RR^l \times \ZZ^m$. The remaining problem
consists therefore in suitably embedding the discrete part into a
vector group.

For this purpose we let ${\mathcal A} = {\rm span}(H_0)$, the matrix
algebra generated by $H_0$. Then $H_0 \subset {\mathcal A}^\times$,
where the latter denotes the group of invertible elements in
${\mathcal A}$. \cch{We claim that ${\mathcal A}^\times = {\mathcal
A} \cap GL(n,\mathbb{R})$, whence is a closed subgroup of
$GL(n,\mathbb{R})$ (because the algebra ${\mathcal A}$ is a linear
subspace, hence closed). Here the inclusion $\subset$ is clear. For
the other direction suppose that a matrix $X \in {\mathcal A}$ is
invertible. Then left multiplication with $X$ is an injective linear
map of ${\mathcal A}$ into itself, thus it is also onto. Hence the
(right) inverse of $X$ is also in~${\mathcal A}$. The group
${\mathcal A}^{\times}$ is also almost connected by}
\cite[Proposition 10]{Fu2}. Hence the structure theorem for
compactly generated LCA groups yields ${\mathcal A}^\times \cong
\RR^s \times \TT^t \times F$, with a finite abelian group $F$.
Possibly by replacing $H_0$ by a closed subgroup of finite index, we
may assume that $\pi_F(H_0)$ is trivial, where $\pi_F$ is the
projection map onto $F$. \ch{Indeed, we have $H_0 \subset
\mathcal{A}^\times \cong \mathbb{R}^s \times \cch{\mathbb{T}^t}
\times F$, and $\pi_F$ is a continuous homomorphism. Then
$\pi_F(H_0) \subset F$ is a finite group, which means that the
kernel $K$ of $\pi_F$, restricted to $H_0$, is a closed subgroup of
finite index: The isomorphism theorem states that $\pi_F(H_0) \simeq
H_0/K$. We replace $H_0$ by $K$.}

For the sake of explicitness, we introduce topological isomorphisms
$\phi : \RR^l \times \ZZ^m \to H_0$ and $\psi: {\mathcal
A}^{\ch{\times}} \to \RR^s \times \TT^t \times F$. Let
$\psi_1,\ldots, \psi_{s+t}$ denote the $\RR^s \times \TT^t$-valued
components of~$\psi$. Then \[ \Theta: \RR^l \times \ZZ^m \ni (x,m)
\mapsto (\psi_1(\phi(x,m)), \ldots, \psi_{s+t}(\phi(x,m))) \in \RR^s
\times \TT^t \] is a continuous group monomorphism, and it is
topological onto its image. \cch{This image is $\psi(H_{0})$, whence
closed as $H_{0}$ is closed in ${\mathcal A}^{\times}$.} Our aim is
to extend $\Theta$ to a continuous monomorphism $\tilde{\Theta} :
\mathbb{R}^{l+m} \to \mathbb{R}^s \times \TT^t$.

First observe that already the map $\Theta_0\!: \RR^l\! \times \ZZ^m\! \ni\! (x,m)\! \mapsto\!
(\psi_1(\phi(x,m)), \ldots, \psi_{s}(\phi(x,m))) \!\in\! \RR^s$ is
injective: The kernel of $\Theta_0$ is mapped by $\Theta$ onto a
closed subgroup of $ \{ 0 \} \times \TT^t$. Hence it is compact, and thus trivial.

Since  $\mathbb{T}^t$ is compact, the projection $\RR^s \times \TT^t \to \RR^s $ is a closed mapping,
and thus $\Theta_0$ has closed image also. Hence \cite[Theorem 9.12]{HeRo} applies to yield that
\[ \{ \Theta_0(1,0,\ldots,0),\ldots,\Theta_0(0,\ldots,0,1)\} \subset
\RR^s \]
 is $\RR$-linearly independent, giving rise to a linear
monomorphism $\RR^{l +m} \to \RR^s$, written as a tuple
$(\tilde{\theta}_1,\ldots, \tilde{\theta}_s)$ of homomorphisms
$\RR^{l+m} \to \RR$.

Now pick $z_{i,j} \in \RR$ ($1\le i \le m, \ch{s+1 \le j \le s+t}$)
such that \ch{$\psi_{j}(\phi(\delta_{i}))\in (z_{i,j} + \ZZ^t )$},
where $\delta_i \in \RR^l \times \ZZ^m$ denotes the vector with $1$
as \ch{$l+i$} th entry, and zeros elsewhere. Letting
\[ \tilde{\theta}_j(x,y) = \psi_j\ch{(\phi(x,0))} + (\sum_{i=1}^m y_i z_{i,j} + \ZZ^t)~~, \qquad \ch{j=s+1, \ldots, s+t}\]
for $(x,y) \in \RR^l \times \RR^m$ therefore gives rise to an
extension
\[ \widetilde{\Theta}: \RR^l \times \RR^m \ni (x,y) \mapsto
(\tilde{\theta}_1(x,y), \ldots, \tilde{\theta}_{s+t}(x,y)) \in \RR^s
\times \TT^t \] of $\Theta$\cch{; then} $\tilde{\Theta}(\RR^{l+m})$
is closed in $\RR^s \times \TT^t$, since already the projection onto
the first $s$ components yields the closed subgroup
$\Theta_0(\mathbb{R}^\ell \times \mathbb{Z}^m\ch{)}$.

Now $H_c = (\ch{\psi^{-1}\circ
\widetilde{\Theta}})(\mathbb{R}^{l+m})$ is as desired: \ch{by}
construction of $\widetilde{\Theta}$, $H_0 = (\psi^{-1} \circ
\widetilde{\Theta })(\RR^l \times \ZZ^m)$, with $H_c/H_0 \cong
\mathbb{T}^m$. Finally, $H_d=(\psi^{-1} \circ
\widetilde{\Theta})(\ZZ^{l+m})$ is discrete \ch{and cocompact in
$H_0$
and thus in $H$.}
\end{proof}

Both the structure of the group and the geometrical intuition of
the action simplify greatly if we can assume that all matrices in
the group have real eigenvalues.
\begin{defn}
An abelian matrix group $H$ {\bf has real (positive) spectrum}
if all $h \in H$ have only real (positive) eigenvalues.
\end{defn}

\begin{rem} \label{rem:matrix_log}
(a) We note that the group $H_c$ in Proposition \ref{prop:struct_ab} (b) can possibly be explicitly computed:
 Let $B_1,\ldots,B_l$ denote infinitesimal generators of the subgroup of $H$ corresponding
 to $\RR^l$, \ch{$B_{l+1},\ldots, B_{l+m}$ generators of the $\ZZ^m$ part, $B_{l+m+1},\ldots,
 B_{l+m+k}$} infinitesimal generators of the $\TT^k$ part, and $B_{l+k+m+1},
 \ldots, B_{l+k+m+f}$ the elements of the finite group.

 Then these matrices commute: This is clear for any pair of matrices contained in the
 discrete \cch{part. Moreover,} if \ch{$1 \le i \le l$ and $l+1 \le j \le l+m$},
differentiating
 the equality \ch{$[{\rm exp}(r B_i),B_j] = 0$} and evaluating at $r=0$ yields \ch{$[B_i,B_j]=0$}.
 Similar arguments apply to the remaining cases, showing that all matrices in the list $B_1,\ldots,
 B_{l+k+m+f}$ commute, and can therefore be jointly decomposed into block triangular form
 as described in Theorem~\ref{thm:structure_commuting_real}. But this block structure is preserved by
 the exponential map, hence it follows that all elements of $H$ have the same block structure.

Thus the decomposition of Theorem \ref{thm:structure_commuting_real}
is valid for the group \ch{$H$} as well. Now the explicit
construction of $H_c$ depends on the unit group of the matrix
algebra $\mathcal{A} = {\rm span}(H)$, which is computable from the
decomposition of the generators into blocks.

(b) The observation in part (a) simplifies the reasoning in
particular for the case of positive spectrum, \chhf{ to the extent that $H_c$ can be computed explicitly:} Suppose that $H$ has
only positive eigenvalues. \chhf{Here, the logarithms of the
generators can be computed} using the power
series
 \begin{equation}
 \label{eqn:matr_log} {\rm log}(A) = \sum_{k=1}^\infty \frac{(-1)^{k+1}}{k} (A-\mathbf{1})^k~~.
 \end{equation}
 First assume that we only have one block. Then $A$ is of the form $a \cdot (\mathbf{1} + N)$, with $a$ a positive
 real and $N$ nilpotent, and we can derive from this
 \begin{equation}
 \label{eqn:matr_log2} {\rm log}(A) = {\rm log}(a) \cdot \mathbf{1} + \sum_{k=1}^{s-1} \frac{(-1)^{k+1}}{k} N^k~~,
 \end{equation} where $s$ is the block size. While these calculations are
 somewhat informal, it can be shown by direct calculation that by defining ${\rm log}(A)$ as in (\ref{eqn:matr_log2}),
 we indeed have ${\rm exp}({\rm log}(A)) = A$ \cite{HiNe}.
 This procedure is applied blockwise to yield the logarithm for the general case.

\chhf{With the notations from part (a), we let $\mathfrak{h}_c =
\span \left(B_1,\ldots,B_l,\log(B_{l+1}),\ldots, \log(B_{l+m})
\right)$; noting that the logarithms are computed in finitely many
steps. We claim that $H_c = \exp(\mathfrak{h}_c)$ is as desired: It
is closed and simply connected by Lemma \ref{lem:real_spect_closed},
and $\exp: \mathfrak{h}_c \to H_c$ is a diffeomorphism. Furthermore,
the additive quotient group  $\mathfrak{h}_c/\langle
B_1,\ldots,\log(B_{l+m}) \rangle$ is quasicompact, by construction
of $\mathfrak{h}_c$. Then the same is true of the exponential
images. But this implies that \cch{$H_c/H_d$} is quasicompact, thus
compact. }
\end{rem}

\begin{rem}
 Let $H< {\rm GL}(n,\mathbb{R})$ be a \chhf{closed} abelian subgroup with real spectrum. Then $H$ contains a closed cocompact subgroup with positive spectrum. To see this, assume that $\theta: H \to \mathbb{R}^l \times \mathbb{Z}^m \times \mathbb{T}^k \times F$ ($F$ finite) is a topological isomorphism, and let $H_0 = \{ h^2 : h \in H \}$. Then $H_0$ is a subgroup with positive spectrum, with $\theta(H_0) = \mathbb{R}^l \times (2 \mathbb{Z})^{m} \times \mathbb{T}^k \times F'$, where $F'$ is the subgroup of squares in $F$.
Thus $H_0$ is as desired.
\end{rem}


\section{Admissibility for closed abelian simply connected
matrix groups}

\label{sect:adm_casc}

\ch{Proposition \ref{prop:struct_ab} leads us to consider the
admissibility of a closed abelian simply connected matrix group. In
this section we will study this question and give some
generalizations at the end. The main \chhf{results are Theorem \ref{thm:adm
sca iff} and its generalization, Theorem \ref{thm:solv_adm}.}}

The following necessary
condition can be immediately derived from \ch{Proposition}
\ref{prop:struct_ab}: If $H \cong \RR^l \times \ZZ^m \times \TT^k
\times F$, and $H \sim H_c$, with $H_c \cong \RR^{l+m}$, then a
necessary condition for~$H$ to be admissible is that $H_c$ acts
freely almost everywhere. The reason is that $H_c$ is admissible,
and thus almost every stabilizer in $H_c$ is compact, hence trivial.

Since one motivation for introducing connected groups to the
discussion is their accessibility via Lie algebras, it is therefore
natural to consider Lie algebra criteria for the property that $H_c$
acts freely. Note that this usually only contains information about
{\em local} mapping properties of the group action, hence we can at
best obtain a criterion for the stabilizers to be discrete. This is
easily seen to be equivalent to {\em local freeness} of the action:
$H$ acts locally freely on $H.x$ if there exists a neighborhood $U$
of unity in $H$ such that $h \mapsto h.x$ is injective on $U$. The
following proposition studies the characterization of locally free
actions.
\begin{prop} \label{prop:local_free}
 Let $H < GL(n,\RR)$ be a Lie subgroup of dimension $l$, with Lie algebra ${\mathfrak{h}} \subset gl(n,\RR)$.
\begin{enumerate}
\item[(a)] For $x \in \RR^n$, the stabilizer ${\rm stab}_H(x)$ is discrete iff
\begin{equation} \label{eqn:rank_cond}
 \mbox{The linear map } {\mathfrak{h}} \ni A \mapsto Ax \in \RR^n \mbox{ has rank }
 l\,.
\end{equation}
Hence, necessarily $l \le n$.
\item[(b)] If ${\rm stab}_H(x)$ is discrete for one $x \in \RR^n$, then there exists
an open $H$-invariant set $U \subset \RR^n$ such that $|\RR^n
\setminus U|=0$, and ${\rm stab}_H(y)$ is discrete for all $y \in
U$.
\item[(c)] Given a basis of ${\mathfrak{h}}$, the existence of $x \in \RR^n$ with discrete stabilizer
can be checked computationally.
\end{enumerate}
\end{prop}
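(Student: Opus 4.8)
The plan is to prove the three parts of Proposition \ref{prop:local_free} in order, starting with the rank criterion for discreteness of the stabilizer, which is the conceptual heart of the statement.

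For part (a), I would pass to the Lie-algebra level. The stabilizer ${\rm stab}_H(x)$ is a closed subgroup of $H$, hence a Lie subgroup, and it is discrete precisely when its Lie algebra is trivial. The Lie algebra of ${\rm stab}_H(x)$ consists of those $A \in \mathfrak{h}$ for which the one-parameter curve $t \mapsto \exp(tA) x$ is constant, i.e. $A x = 0$ after differentiating at $t=0$; conversely $Ax = 0$ gives $\exp(tA) x = \sum_k \frac{t^k}{k!} A^k x = x$ for all $t$. Thus the Lie algebra of the stabilizer is exactly the kernel of the evaluation map $\mathfrak{h} \ni A \mapsto Ax$. Discreteness of ${\rm stab}_H(x)$ is therefore equivalent to this kernel being $\{0\}$, i.e. to the map having full rank $l = \dim \mathfrak{h}$. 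Since the map sends the $l$-dimensional space $\mathfrak{h}$ into $\RR^n$, full rank forces $l \le n$. The one point requiring a little care is the claim that a closed subgroup with trivial Lie algebra is discrete — this is standard Lie theory (such a subgroup is $0$-dimensional), and I would simply cite it.

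For part (b), the set of $x$ where the rank condition in (\ref{eqn:rank_cond}) holds is an open condition: fixing a basis $A_1,\ldots,A_l$ of $\mathfrak{h}$, the evaluation map has rank $l$ at $x$ iff the $n \times l$ matrix with columns $A_1 x,\ldots,A_l x$ has a nonvanishing $l \times l$ minor, which is a polynomial condition in $x$. Hence the set $U$ where the stabilizer is discrete is the complement of the common zero set of finitely many polynomials (the $l \times l$ minors). If this set is nonempty — which is exactly the hypothesis that the stabilizer is discrete for one $x$ — then not all these polynomials vanish identically, so their common zero set is a proper algebraic subvariety and thus has Lebesgue measure zero. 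This gives $|\RR^n \setminus U| = 0$. Finally $U$ is $H$-invariant because the rank of $A \mapsto A(h.x)$ agrees with that of $A \mapsto A x$ for $h \in H$: conjugation-invariance of $\mathfrak{h}$ (it is the Lie algebra of $H$) together with $A(hx) = h(h^{-1}Ah)x$ shows the two evaluation maps differ by the invertible composition with $h$ and the linear automorphism $A \mapsto h^{-1}Ah$ of $\mathfrak{h}$, hence have equal rank.

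For part (c), the existence of some $x$ with discrete stabilizer is, by (a) and the openness argument in (b), equivalent to the non-vanishing of at least one of the $l\times l$ minors as a polynomial in $x$. Given the basis $A_1,\ldots,A_l$, these minors are explicitly computable polynomials, and checking whether one of them is not identically zero is a finite, algorithmic task (for instance, evaluate the symbolic determinants, or test the rank at finitely many sample points). I expect the main obstacle to lie in part (b): pinning down cleanly that the good set $U$ is both open, conull, and $H$-invariant requires combining the polynomial/algebraic-variety argument for the measure-zero complement with the invariance computation. The identification of the stabilizer's Lie algebra in (a) is routine Lie theory, and part (c) is then essentially a restatement of the algebraic content of (a) and (b).
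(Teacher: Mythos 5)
Your proposal is correct and follows essentially the same route as the paper: part (a) is the standard Lie-theoretic identification (the paper phrases it via the derivative of the orbit map $h \mapsto hx$ at the identity, you via the Lie algebra of the stabilizer being the kernel of $A \mapsto Ax$ --- the same computation), and parts (b) and (c) rest, exactly as in the paper, on the observation that the rank condition amounts to non-vanishing of the polynomial $l \times l$ minors of the evaluation map, whose common zero set is either all of $\RR^n$ or a set of measure zero. Your explicit verification of the $H$-invariance of $U$ via $A(hx) = h\bigl(h^{-1}Ah\bigr)x$ fills in a point the paper leaves implicit; otherwise the two arguments coincide.
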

\begin{proof}
The stabilizer is discrete iff the canonical mapping $H \ni h
\mapsto hx$ is a local homeomorphism onto the orbit $H.x$. But the
mapping from (\ref{eqn:rank_cond}) is the derivative of this map at
unity. This proves part $(a)$.

For part (b) we note that (\ref{eqn:rank_cond}) is fulfilled iff
there exists an index set $I \subset \{ 1,\ldots,n\}$ with $|I| =
l$, such that the mapping
\[
 M_{I,x}: {\mathfrak{h}} \ni A \mapsto ((Ax)_i)_{i \in I} \in \RR^I
\]
is a vector space isomorphism. This is the case iff ${\rm
det}(M_{I,x}) \not=0$. We let $P_I(x) = {\rm det}(M_{I,x})$, and
observe that since the determinant is polynomial, and $x$ enters
linearly in the definition of $M_{I,x}$, $P_I(x)$ is indeed a
polynomial.

It follows that the set of all $x$ for which (\ref{eqn:rank_cond})
holds is characterized by the condition~$P(x)\! \not=\! 0$, where
\[
 P(x) = \sum_{I \subset \{ 1,\ldots, n \}, |I|=l } P_I^2(x)~~.
\]
This set is clearly open, and if it is not empty, its complement has
measure zero. This settles (b), and the algorithm for (c) is now
clear: Check whether all coefficients of $P$~vanish. \chhf{Since the degree of $P$ is $\le 2l$, this can be done in finitely many steps.}
\end{proof}

As a further necessary criterion, we note:
\begin{cor}
Let $H < GL(n,\RR)$ be an admissible abelian matrix group, $H \cong
\RR^l \times \ZZ^m \times \TT^k \times F$. Then $l+m \le n$.
\end{cor}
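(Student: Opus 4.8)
The plan is to combine the necessary condition from Proposition~\ref{prop:struct_ab} with the rank criterion of Proposition~\ref{prop:local_free}. By Proposition~\ref{prop:struct_ab}(b), since $H \cong \RR^l \times \ZZ^m \times \TT^k \times F$ is admissible, there exists $H_c \sim H$ with $H_c \cong \RR^{l+m}$, and admissibility transfers to $H_c$ via the equivalence relation $\sim$. Thus $H_c$ is an admissible closed abelian matrix group which, as an abstract Lie group, is isomorphic to $\RR^{l+m}$; in particular it is connected, simply connected, and of dimension $l+m$.

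Next I would invoke the admissibility characterization of Theorem~\ref{thm:char_adm}: admissibility of $H_c$ forces condition (b)(ii), namely that ${\rm stab}_{H_c}(\omega)$ is compact for almost every $\omega \in \widehat{\mathbb{R}^n}$. By Lemma~\ref{lem:cp_stab}(c), since $H_c$ is simply connected, connected and abelian, it has no nontrivial compact subgroups, so these almost-everywhere compact stabilizers are in fact trivial, and certainly discrete. Hence there exists at least one $\omega$ (indeed almost every $\omega$) for which ${\rm stab}_{H_c}(\omega)$ is discrete.

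Finally I would apply Proposition~\ref{prop:local_free}(a) to the Lie subgroup $H_c$ acting on $\widehat{\mathbb{R}^n} \cong \RR^n$ via the dual action: the existence of a point with discrete stabilizer is equivalent to the rank condition~(\ref{eqn:rank_cond}) for the Lie algebra $\mathfrak{h}_c$ of $H_c$, and the proposition records that this condition forces $\dim H_c \le n$. Since $\dim H_c = l+m$, we conclude $l + m \le n$, as claimed. The only mild care needed is to ensure that the dual action of $H_c$ on $\widehat{\mathbb{R}^n}$ is of the affine type to which Proposition~\ref{prop:local_free} applies; this is immediate because the dual action is (up to the identification of $\widehat{\mathbb{R}^n}$ with row vectors) a linear action of a matrix group, so the stabilizer analysis and the rank criterion apply verbatim. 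I do not anticipate a genuine obstacle here: the statement is essentially a bookkeeping corollary, and the main point is simply to chain together the three earlier results in the correct order.
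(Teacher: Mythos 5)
Your proposal is correct and follows essentially the same route the paper intends: the corollary is stated immediately after Proposition~\ref{prop:local_free} precisely because it chains Proposition~\ref{prop:struct_ab}(b) (admissibility passes to $H_c \cong \RR^{l+m}$), Theorem~\ref{thm:char_adm}(b)(ii) with Lemma~\ref{lem:cp_stab}(c) (a.e.\ compact stabilizers are trivial, hence discrete), and the rank bound $\dim H_c \le n$ from Proposition~\ref{prop:local_free}(a), exactly as you do. Your remark that the rank criterion applies verbatim to the dual (row-vector) action is also the paper's implicit stance, as seen in condition (ii) of Theorem~\ref{thm:adm sca iff}.
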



\ch{\begin{thm}\label{thm:adm sca iff}
 Let $H < {\rm GL}(n,\mathbb{R})$ be a closed simply connected abelian matrix group \cch{of dimension $k$} with positive spectrum. Then $H$ is
 admissible if and only if
\begin{enumerate}
 \item[(i)] There exists $h \in H$ with $|{\rm det}(h)| \not= 1$;
 \item[(ii)] For some $x \in \RR^n$ (and hence for almost all) the linear map
\begin{equation} \label{eqn:rank_cond_2nd}
  {\mathfrak{h}} \ni A \mapsto xA \in \RR^n
\end{equation}
 has rank $k$.
\end{enumerate}
\end{thm}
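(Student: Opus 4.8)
The plan is to apply the general characterization of admissibility, Theorem \ref{thm:char_adm}, and translate each of its three conditions into the stated criteria using the hypotheses that $H$ is closed, simply connected, abelian, of dimension $k$, and has positive spectrum. Since $H$ is abelian, the modular function is trivial, so condition (iii) of Theorem \ref{thm:char_adm} reduces to the existence of $h \in H$ with $|{\rm det}(h)| \neq 1$, which is exactly condition (i). Thus the entire content of the proof is to show that, under our standing hypotheses, the combination of conditions (i) and (ii)(the compact stabilizer condition and the existence of a measurable fundamental domain) is equivalent to the rank condition (ii) on the map $A \mapsto xA$.

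First I would handle the compact stabilizer condition. By Lemma \ref{lem:cp_stab}(c), since $H$ is simply connected, connected, and abelian, $H$ acts freely on the set $\Omega_c$ of points with compact stabilizer. Because $H \cong \RR^k$ has no nontrivial compact subgroups, a stabilizer is compact if and only if it is trivial, and hence if and only if it is discrete. By Proposition \ref{prop:local_free}(a), the stabilizer ${\rm stab}_H(x)$ is discrete precisely when the linear map ${\mathfrak h} \ni A \mapsto Ax$ has rank $k = \dim H$; and by Proposition \ref{prop:local_free}(b), if this holds for one $x$ it holds on a conull open $H$-invariant set. Here I must be slightly careful about left versus right action: the dual action appearing in the Calderon condition is the right action $\omega \mapsto \omega h$ on row vectors $\omega \in \widehat{\RR^n}$, so the relevant infinitesimal map is $A \mapsto xA$ acting on row vectors, matching \eqref{eqn:rank_cond_2nd}. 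Thus condition (ii) of the theorem is exactly the statement that almost every dual stabilizer is compact, giving condition (ii) of Theorem \ref{thm:char_adm}.

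It remains to deal with condition (i) of Theorem \ref{thm:char_adm}, the existence of a measurable fundamental domain. The key point is that under our hypotheses this condition is automatic once the rank condition holds, so it need not appear as a separate hypothesis. The plan is to argue that when $H \cong \RR^k$ acts locally freely on a conull open set $U$ with compact (hence trivial) stabilizers, the orbit space admits a Borel cross-section: the action is then free and proper almost everywhere, and standard results on smooth free actions of $\RR^k$ (or the measurable selection theorems cited in \cite{Fu_Calderon,MiroThesis}) produce a measurable fundamental domain. Here the positivity of the spectrum is what guarantees that the dual orbits are well-behaved and that such a cross-section exists measurably; concretely, positive spectrum together with simple connectedness forces the exponential map to be a diffeomorphism (Lemma \ref{lem:real_spect_closed}), so the orbits are embedded submanifolds and the quotient is standard.

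The main obstacle I anticipate is precisely this last step: showing that the rank condition (ii) \emph{implies} the existence of a measurable fundamental domain, so that condition (i) of Theorem \ref{thm:char_adm} comes for free and does not need to be listed separately in the theorem statement. Verifying that the almost-everywhere free and proper action of the simply connected group $H$ yields a measurable transversal requires invoking properness of the action (a consequence of closedness of $H$ combined with the diffeomorphism property from Lemma \ref{lem:real_spect_closed}) and then a measurable selection argument. The forward direction (admissibility implies (i) and (ii)) is comparatively routine, following directly from Theorem \ref{thm:char_adm} together with Lemma \ref{lem:cp_stab}(c) and Proposition \ref{prop:local_free}(a); the real work is in the converse, ensuring that no separate fundamental-domain hypothesis is needed in the positive-spectrum setting.
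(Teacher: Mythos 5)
Your reduction to Theorem \ref{thm:char_adm} is exactly how the paper begins: conditions (i) and (iii) there are handled by commutativity and by the remarks at the start of the section, and the compact-stabilizer condition is correctly translated into the rank condition via Lemma \ref{lem:cp_stab}(c) and Proposition \ref{prop:local_free}. You also correctly identify where the real work lies: showing that, under positive spectrum, the rank condition forces the existence of a measurable fundamental domain. But precisely at that step your argument has a genuine gap. You claim the action is then ``free and proper almost everywhere,'' deducing properness from closedness of $H$ together with the fact that $\exp:\mathfrak{h}\to H$ is a diffeomorphism, and then invoke ``standard results on smooth free actions of $\mathbb{R}^k$'' to produce a measurable cross-section. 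This implication is false, and the paper's own Section \ref{sect:counter_ex} refutes it: the second example there is a closed, simply connected, connected abelian matrix group $H=\exp(\mathfrak{h}) \subset GL(8,\mathbb{R})$ for which $\exp$ is a diffeomorphism and the stabilizers are trivial almost everywhere, yet no measurable fundamental domain exists (Theorem \ref{thm:char_mfd_algebra}(b)). So closedness, simple connectedness, the diffeomorphism property of $\exp$, and almost-everywhere free action together do \emph{not} make the orbit space standard; a smooth free action of $\mathbb{R}^k$ can have a pathological (non-countably-separated) orbit equivalence relation, and no measurable selection theorem applies. Positivity of the spectrum must enter in a much more structural way than merely guaranteeing that $\exp$ is a diffeomorphism.

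The paper closes this gap by an explicit construction, by induction on the number of blocks in the joint block diagonalization of the infinitesimal generators (Theorem \ref{thm:structure_commuting_real}), which is available exactly because the spectrum is real. For a single block the group splits as $H_1H_2$ with $H_2$ unipotent; the Chevalley--Rosenlicht theorem combined with Effros' theorem (Theorem \ref{thm:orbit_unipt}) yields a measurable set meeting every $H_2$-orbit once, and the scalar direction $H_1$ is handled by normalizing the last coordinate to $\pm 1$. The induction step then glues fundamental domains across blocks via the product construction of Lemma \ref{lem:mfd_ind}, using the freeness statement built into Theorem \ref{thm:orbit_unipt}. None of this machinery is replaceable by a generic properness or selection argument, so your proposal, while organizationally on target, is missing the essential construction that makes the converse direction true.
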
}

 \begin{proof}

\ch{Condition (i) is necessary by Theorem \ref{thm:char_adm}, and
necessity of condition (ii) has been shown at the beginning of the
section. For the sufficiency we note first that by the above
considerations condition (ii) holds if and only if  there exists at
least one (and hence almost all) dual orbit on which $H$ acts
locally freely. We will proceed by induction over the number of
blocks in the joint tridiagonalization of the infinitesimal
generators of $H$.}

 The following theorem, which is \chhf{mainly} due to Chevalley and Rosenlicht, is essential for treating the single block case.
 Note that the theorem yields a measurable set meeting {\em every} orbit,
 not just almost every orbit, in  a single point.
 \begin{thm} \label{thm:orbit_unipt}
Let $H$ be a closed connected subgroup of $T(n,\RR)$, the group of
 upper triangular matrices with ones on the diagonal. Then there
 exists a measurable subset of~$\RR^n$ meeting each $H$-orbit in
 precisely one point.
 Moreover, $H$ acts freely on each orbit on which it acts
 locally freely.
 \end{thm}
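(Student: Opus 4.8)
The plan is to prove Theorem \ref{thm:orbit_unipt} by invoking the classical theory of algebraic transformation groups, specifically the structure theory for actions of unipotent algebraic groups. The group $T(n,\RR)$ is a unipotent algebraic group, and any closed connected subgroup $H$ is itself unipotent and algebraic (closed connected subgroups of unipotent groups are automatically algebraic and are the real points of unipotent algebraic groups). The action of $H$ on $\RR^n$ is an algebraic action. The key classical input is the theorem of Chevalley and Rosenlicht: for a unipotent algebraic group acting morphically on an affine variety, all orbits are closed and the orbit map is a homeomorphism onto its image (equivalently, the action is proper and the orbits are Zariski-closed affine subspaces, in fact closed embedded submanifolds). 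I would cite Rosenlicht's results on the existence of rational cross-sections / quotients for unipotent group actions.

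First I would establish that every $H$-orbit is closed in $\RR^n$ and that the stabilizers are connected (being themselves closed connected subgroups of a unipotent group, stabilizers are connected unipotent subgroups). This immediately gives the second assertion of the theorem: if $H$ acts locally freely on an orbit $H.x$, then $\mathrm{stab}_H(x)$ is discrete by definition of local freeness, but a discrete connected group is trivial, so the action on that orbit is actually free. This is the cleaner half of the argument, and it reduces to the single fact that stabilizers in unipotent group actions are connected.

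Next I would address the existence of a measurable transversal meeting \emph{every} orbit in exactly one point. The idea is to construct a global cross-section using the Rosenlicht stratification: by Rosenlicht's theorem there is a nonempty $H$-invariant Zariski-open set admitting an algebraic quotient, i.e.\ a rational cross-section, and one proceeds by Noetherian induction on the complement, which is again an $H$-invariant variety of strictly smaller dimension on which the argument repeats. Stitching together the cross-sections of the strata yields a Borel set meeting each orbit in precisely one point. Since all orbits are closed and the action is proper, the resulting measurable fundamental domain is well behaved. I would phrase this as: the orbit space $\RR^n/H$ is a standard Borel space with a Borel cross-section, because proper actions of Lie groups on manifolds admit Borel transversals (one may alternatively invoke the general measurable selection theorem for closed-orbit actions, but the algebraic argument gives the stronger ``every orbit'' conclusion).

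The main obstacle will be the passage from ``almost every orbit'' to ``every orbit'' for the single-point transversal. Measurable selection theorems typically yield a transversal only up to a null set, whereas the statement here insists on meeting \emph{every} orbit. This is precisely where the algebraic structure (closedness of all orbits, the Rosenlicht stratification into invariant locally closed pieces each with a genuine cross-section) is indispensable, and it is why the theorem is attributed to Chevalley and Rosenlicht rather than to soft descriptive set theory. I would therefore take care to organize the induction on dimension so that the complement of each Zariski-open stratum is handled separately and no orbit is missed.
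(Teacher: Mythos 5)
You share with the paper its essential input---the Chevalley--Rosenlicht theorem on unipotent orbits---but you route both conclusions through algebraic-group machinery where the paper uses topology and a descriptive-set-theoretic dichotomy, and your version contains one outright error and a genuine gap. The error: the action of a unipotent group on $\RR^n$ is \emph{not} proper in general, so your parenthetical ``equivalently, the action is proper'' is false, and so is the fallback appeal to Borel transversals for proper actions. Properness forces compact stabilizers, whereas unipotent stabilizers are typically noncompact: already for $H = T(2,\RR)$ acting on $\RR^2$, every point of the line $y=0$ is fixed by the whole group, so its stabilizer is a copy of $\RR$; closedness of all orbits plus orbit maps being homeomorphisms is strictly weaker than properness. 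Also, your justification that stabilizers are connected is circular as written (``being themselves closed connected subgroups \ldots stabilizers are connected''); the correct statement is that a point stabilizer is a \emph{Zariski-closed} subgroup of a unipotent group in characteristic zero, hence unipotent, hence connected. That fact is true and does yield the freeness claim, but the paper gets freeness more cheaply: Chevalley--Rosenlicht says each orbit is simply connected, so a discrete stabilizer, necessarily of the form $\ZZ^m$, would make the orbit $H/{\rm stab}_H(x) \cong \RR^{k-m} \times \TT^m$ (with $k = \dim H$), which is simply connected only when $m=0$.

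The more serious gap is in your construction of the transversal. Rosenlicht's stratification and cross-section theorems are statements over an algebraically closed field; to apply them to the $H$-action on $\RR^n$ you must descend to $\RR$: the strata and rational sections must be defined over $\RR$, and you must know that the real points of a single geometric orbit form a single orbit of the real group $H$---which holds for unipotent stabilizers by vanishing of the relevant Galois cohomology, but none of this appears in your sketch. Moreover, the reason you give for needing this machinery---that measurable selection theorems only produce transversals up to null sets---misjudges the tool the paper actually uses. Effros's theorem is an exact dichotomy: a Borel set meeting \emph{every} orbit precisely once exists if and only if for every $x$ the natural map $H/{\rm stab}_H(x) \to H.x$ is a homeomorphism. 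Chevalley--Rosenlicht verifies that hypothesis for every orbit simultaneously, so the paper obtains the everywhere-statement with no stratification, no Noetherian induction, and no rationality-over-$\RR$ issues. Your plan could be repaired along the algebraic lines you indicate, but the repair is substantial and, given Effros's theorem, unnecessary.
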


 \begin{proof}
We rely on a theorem relating the existence of measurable
 fundamental domains to regularity of the orbits. More precisely,
 suppose a locally compact group $H$ acts continuously on the
 locally compact space $\RR^n$. Then by a result of Effros \ch{
 \cite[Theorems~2.1\nobreakdash--2.9, $(2) \Leftrightarrow (12)$]{Ef}},  there exists a
 measurable set meeting each orbit in $\RR^n$ precisely once iff
 for every $x \in U$ the natural mapping $H \in h \mapsto h.x$
 induces a homeomorphism $H/{\rm stab}_H(x) \to H.x$. But the Chevalley-Rosenlicht theorem
 \cite[Theorem 3.1.4]{CoGr} shows precisely that, with the additional information that the $H$-orbits are simply
 connected.

 The second fact then implies that $H$ acts freely whenever it acts locally freely: If ${\rm stab}_H(x)$ is discrete, it follows that
 ${\rm stab}_H(x) \cong \mathbb{Z}^m$, and then $H/{\rm stab}_H(x) \cong \mathbb{R}^{n-m} \times \mathbb{T}^m$. But the quotient is simply connected, which implies that
 $m=0$.
 \end{proof}

 The following lemma is needed for the induction step.
 \begin{lemma} \label{lem:mfd_ind}
 Let $H_i < GL(V_i)$ be given, where $V_i$ are vector spaces. Let
 $\sigma: H_1 \to GL(V_2)$ be a group homomorphism with
 $\sigma(h_1) h_2 = h_2 \sigma(h_1)$, for all $h_1 \in H_1, h_2 \in
 H_2$. Let $H < GL(V_1 \oplus V_2)$ be given by
 \[ H = \{ h = (h_1,\sigma(h_1) h_2): h_1 \in H_1, h_2 \in H_2 \}
 ~~. \] Let $S_i \subset V_i$ denote a measurable fundamental
 domain of $V_i/H_i$, with the additional property that $H_i$ acts freely
 on the orbits running through $S_i$. Then $S_1 \times S_2$ is a measurable
 fundamental domain for $(V_1 \oplus V_2) / H$, and $H$ acts freely on the orbits
 running through $S$.
 \end{lemma}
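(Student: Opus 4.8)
The plan is to prove Lemma \ref{lem:mfd_ind} by showing directly that $S_1 \times S_2$ meets each $H$-orbit in exactly one point, and that the action is free there. The group $H$ is built as a ``twisted product'' of $H_1$ and $H_2$: an element acts on $(v_1,v_2) \in V_1 \oplus V_2$ by $h.(v_1,v_2) = (h_1 v_1, \sigma(h_1) h_2 v_2)$. The key structural feature I would exploit is that the first component of the action depends only on $h_1$, so the projection $H \to H_1$, $h \mapsto h_1$, intertwines the $H$-action on $V_1$ with the $H_1$-action. This lets me decompose the problem into two stages, handling the $V_1$-coordinate first and then the $V_2$-coordinate.

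First I would address \emph{existence} of a representative. Given an arbitrary $(v_1,v_2) \in V_1 \oplus V_2$, since $S_1$ is a fundamental domain for $V_1/H_1$, there is a (unique) $h_1 \in H_1$ with $h_1 v_1 \in S_1$. Applying the element $(h_1, \sigma(h_1) \cdot \mathbf{1})$ of $H$ (taking $h_2 = \mathbf{1}$) moves the first coordinate into $S_1$, at the cost of transforming the second coordinate by $\sigma(h_1)$. Now the second coordinate lives in $V_2$, and I can use the remaining freedom, namely elements of the form $(\mathbf{1}, h_2)$ with $h_1 = \mathbf{1}$ (note $\sigma(\mathbf{1}) = \mathbf{1}$, so these genuinely lie in $H$), to move the second coordinate into $S_2$ using that $S_2$ is a fundamental domain for $V_2/H_2$. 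Thus every orbit meets $S_1 \times S_2$.

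Next comes \emph{uniqueness and freeness}, which I expect to be the main obstacle, since here the intertwining between the two coordinates must be controlled carefully. Suppose two points of $S_1 \times S_2$, say $(s_1,s_2)$ and $(s_1',s_2')$, lie on the same $H$-orbit, so $h.(s_1,s_2) = (s_1',s_2')$ for some $h = (h_1, \sigma(h_1) h_2) \in H$. Comparing first coordinates gives $h_1 s_1 = s_1'$ with $s_1, s_1' \in S_1$; since $S_1$ is a fundamental domain and $H_1$ acts freely on the orbit through $s_1$, I conclude $s_1 = s_1'$ \emph{and} $h_1 = \mathbf{1}$. But then $\sigma(h_1) = \mathbf{1}$, so the second-coordinate action reduces to $h_2 s_2 = s_2'$ with $s_2, s_2' \in S_2$; the same argument for $S_2$ forces $s_2 = s_2'$ and $h_2 = \mathbf{1}$. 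Hence the representing element is $h = \mathbf{1}$, which gives uniqueness of the representative and freeness of the action simultaneously. The subtle point to get right is that discarding the compact-stabilizer ambiguity requires the \emph{freeness} hypothesis on the $S_i$, not merely that they are fundamental domains; without it, $h_1$ could be a nontrivial element fixing $s_1$, and then $\sigma(h_1) \neq \mathbf{1}$ would contaminate the second coordinate and break the clean reduction. Finally, measurability of $S_1 \times S_2$ is immediate from measurability of the factors, and the fundamental domain is for an $H$-invariant conull (indeed full) Borel subset since the factors have this property.
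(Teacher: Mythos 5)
Your proposal is correct and takes essentially the same approach as the paper: your uniqueness/freeness paragraph (compare first coordinates, use freeness of $H_1$ to force $h_1 = \mathbf{1}$, hence $\sigma(h_1)=\mathbf{1}$, then repeat in the second coordinate) is precisely the paper's argument, including the observation that the freeness hypothesis is what prevents a stabilizing $h_1$ from contaminating the $V_2$-coordinate via $\sigma(h_1)$. Your existence step is the pointwise version of the paper's argument, which makes the ``almost every point is covered'' claim rigorous by a Fubini slice argument: since $\sigma(h_1)$ is a fixed invertible linear map, the slice of $H.(S_1\times S_2)$ over each $y = h_1.s_1$ has complement of measure zero in $\{y\}\times V_2$ --- this is exactly the detail your final sentence compresses into ``since the factors have this property.''
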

 \begin{proof}
 We compute
 \[ U = H.(S_1 \times S_2) = \{ (h_1 .s_1, \sigma(h_1) h_2 . s_2) : h_i
 \in H_i, s_i \in S_i \} ~~.\] Then for almost every $y \in V_1$,
 $y = h_1.s_i$ for suitable $h_1 \in H_1$ and $s_1 \in S_1$.
 Moreover, for each such $y$, the slice
 \[ U \cap \{ y \} \times V_2 \] contains the elements $\sigma(h_1)
 h_2.s_2$, where $h_2 \in H_2$ and $s_2$ in $S$. Since
 $\sigma(h_1)$ is a fixed invertible linear mapping, the slice has
 complement of measure zero in $ \{ y \} \times V_2$ (in the
 Lebesgue measure on the affine subspace). Hence Fubini's theorem
 implies $|V_1 \times V_2 \setminus U| \!=\! 0$.

 For the second property of a measurable fundamental domain, assume $(s_1,s_2) \!=\! h.(\tilde{s}_1,\!\tilde{s}_2)
\! =\!\! (h_1 \tilde{s}_1,\sigma(h_1) h_2. \tilde{s}_2)$, for suitable $s_i,\tilde{s}_i \in S_i$ and $h=(h_1,h_2) \in H$.
 Then the fact that $S_1$ is a fundamental domain yields that
 $s_1 = \tilde{s}_1$, and $h_1$ is contained in the stabilizer. By the freeness assumption, $h_1= {\bf 1}$.
 Hence \ch{$h_2 \tilde{s}_2 = s_2 $}, resulting again in $s_2 = \tilde{s}_2$ and $h_2 = {\bf
 1}$.

We remark that this argument also covers the special case that
  $H_2$ is trivial; in this case, $S_2 = V_2$.
 \end{proof}

 Now let $A_1,\ldots,A_k$ denote a basis of the Lie algebra of $H$. We
 assume that $H$ has positive
  eigenvalues, hence all eigenvalues of the $A_i$ are real. Therefore there exists
  a basis of $\RR^n$ with respect to which the $A_i$ have the block diagonal form from
  \ch{Theorem \ref{thm:structure_commuting_real}}. The proof now proceeds by induction over the number of blocks
  in the decomposition.

  First suppose that the number of blocks equals one, i.e. each basis element
  has a single eigenvalue. If this eigenvalue is zero, for all $A_i$, then $H < T(n,\RR)$,
  and Theorem \ref{thm:orbit_unipt} applies. In the remaining case, we may assume after
  replacing some of the $A_i$ by suitable linear combinations, that $A_1$ has
  eigenvalue $1$, and the remaining $A_i$ have eigenvalue $0$. Hence $A_1 = \mathbf{1} + N$, with a suitable proper upper tridiagonal matrix
  $N$.

  Hence $H = H_1 H_2$, where $H_1 = {\rm exp}(\RR A_1)$, and $H_2 = {\rm exp}({\rm span} (A_i:i=2,\ldots,k))$.
  Note that $H_2 < T(n,\RR)$. Hence \ch{by theorem
  \ref{thm:orbit_unipt},}
  there exists a measurable set $S_2$ meeting each $H_2$-orbit
  in precisely one point. Observe that for all $h_2 \in H_2$ and all $x=(x_1,\ldots,x_n)^t \in \RR^n$,
  $h_2(x) = (y,x_n)$, for suitable $y \in \RR^{n-1}$. In particular, the
  affine subspaces $\RR^{n-1} \times \{ \pm 1 \}$ are $H_2$-invariant.
  Hence, $S=S_2 \cap (\RR^{n-1} \times \{ \pm 1 \})$ is
  a measurable set meeting each $H_2$-orbit in this $H_2$-invariant subset precisely
  once.

  We claim that $S$ meets each orbit in $ \RR^{n-1} \times (\RR \setminus \{ 0 \})$ in precisely one point.
  To see this, observe that $h_1 = {\rm exp}(r A_1)$ factors as $h_1(x) = e^r u(r)$, with $u(r)$ a unipotent matrix, keeping the $n$th coordinate of each vector
  fixed.

  Let $(y,x_{n})$ with $y \in \RR^{n-1}$ and $x_n \in \RR \setminus \{ 0 \}$ be given. Let
  $r = \log(|x_n|)$.  Then $\exp (-r A_1)(y,x_n) = (y',{\rm sign} \cch{(x_{n})})$, with suitable $y' \in \RR^{n-1}$.
  By choice of $S$ there exists $s \in S$ and unique $h \in H_2$ such that
  $(y,{\rm sign}\cch{(x_{n})}) = h.s$. This shows that each orbit is met.

  Moreover, if $h_1 h_2. s = \tilde{s}$ for $s, \tilde{s} \in S$ and $h_i \in H_i$, the comparison
  of the $n$th coordinates shows that $h_1$ is the identity. But $h_2 .s = \tilde{s}$ implies
  $s = \tilde{s}$, because of $S \subset S_2$.

  This shows that there exists a measurable set meeting each orbit in $\RR^{n-1} \times (\RR \setminus \{ 0
   \})$ precisely once. The action is free wherever it is locally free: For $H_2$, this observation was part
  of Theorem \ref{thm:orbit_unipt}, for $H_1 H_2$ it follows from this and the fact that $H_1$ acts freely
  on the $n$th variable. Hence we may replace \ch{$S$} by the smaller set $S' \subset S$ of all points
  in $S$ fulfilling in addition the local freeness condition (\ref{eqn:rank_cond}). This condition
  determines a measurable subset $S'$ of $S$, and the orbits going through this subset yield the
  intersection of $\RR^{n-1} \times (\RR \setminus \{ 0 \})$ with the set of all orbits of maximal
  dimension. \ch{By Proposition \ref{prop:local_free} and the hypothesis of the theorem,} this is a Borel set with complement of measure zero. Hence we have finally produced a
  measurable fundamental domain $S'$ with the property that $H$ acts freely on the orbits through $S'$.
 This concludes the one block case.

Now suppose we have shown the statement for $\ell$ blocks, and
  assume that the joint block diagonalization of the Lie algebra basis for $H$ has $\ell+1$
  blocks.
  Let $k'$ denote the dimension of the space spanned by the matrices
  $\tilde{A}_1,\ldots,\tilde{A}_k$, obtained by taking the first $\ell$
  blocks of $A_1,\ldots,A_k$. Denote by $n' < n$ the sum of the sizes of the first $\ell$
  blocks.

Then, passing to suitable linear combinations and reindexing
  allows the assumption \linebreak that $\tilde{A}_{k'+1}=\ldots=\tilde{A}_k=0$.
  We let $\ch{H_1^s }= {\rm exp}({\rm span}(\tilde{A}_1,\ldots,\tilde{A}_{k'})) \subset {\rm GL}(n',\RR)$,
   $ \ch{H_2^s }\subset {\rm GL}(n-n',\RR)$ the complementary
  subgroup corresponding to the remaining basis elements, $\ch{H_1^b\! =\! {\rm exp}({\rm span}(A_1,\ldots,A_{k'})) \!\subset\! {\rm GL}(n,\RR)}$,
  and $ \ch{H_2^b ={\rm exp}({\rm span}(A_{k'+1},\ldots,A_{k})) \subset {\rm GL}(n,\RR)}.$
 Then there exists a group homomorphism $\sigma :\ch{ H_1^s }\to {\rm
 GL}(n-n',\RR)$ such
  that
  \begin{equation} \label{eqn:struct_H}
 \ch{ H_1^{b} }= \left\{ \left( \begin{array}{cc} h_1 & 0 \\ 0 & \sigma(h_1) \end{array}
  \right) ~:~\ch{h_1 \in H_1^s} \right\}~~,
  \end{equation}
  and
  \begin{equation} \label{eqn:struct_H_2}
  H = \ch{H_1^b H_2^b} = \left\{ \left( \begin{array}{cc} h_1 & 0 \\ 0 & \sigma(h_1) h_2\end{array}
  \right) ~:~\ch{h_1 \in H_1^s, h_2 \in H_2^s} \right\}~~.
  \end{equation}
  The induction hypothesis yields measurable fundamental domains
  $S_1$ and $S_2$
  for \ch{$\mathbb{R}^{n'}/H_1^{s}$} and \ch{$\mathbb{R}^{n-n'}/H_2^{s}$}, respectively. \ch{Then} by Lemma
  \ref{lem:mfd_ind}, $\ch{S}=S_1 \times S_2$ is \ch{a} measurable fundamental
  domain for \ch{$\mathbb{R}^{n}/H$ and $H$ acts freely on all orbits running through
  $S$. Finally Theorem \ref{thm:char_adm} gives the desired conclusion.}
 \end{proof}



It turns out that the much more general solvable case can be treated
as well, using a moderate amount of Lie theory. For the pertinent
notions regarding general Lie groups and algebras we refer to
\cite{Var}; the results specific to exponential Lie groups can be
found in \cite{Bernatetal}.

\begin{defn}
 Let $\mathfrak{h}< {\rm gl}(n,\mathbb{R})$ be a Lie-subalgebra. We call $\mathfrak{h}$ {\em exponential} if for all $X \in \mathfrak{h}$, ${\rm ad}(X):\mathfrak{h} \to \mathfrak{h}$ does not have a purely imaginary eigenvalue.
\end{defn}

\begin{rem}
 It is known that all exponential Lie algebras are solvable. Given such a Lie algebra $\mathfrak{h}$, let $\widetilde{H}$ denote the associated connected, simply connected Lie group, and let $\widetilde{\exp}: \mathfrak{h} \to \widetilde{H}$ denote the exponential map.  Then $\mathfrak{h}$ is exponential iff $\widetilde{\exp}$ is a diffeomorphism \cite{Bernatetal}.
\end{rem}

The following well-known result is a special case of \cite[Theorem 3.7.3]{Var}.
\begin{lemma}
 Let $\mathfrak{h}<{\rm gl}(n,\mathbb{R})$ be solvable, and $(\varrho,V)$ an $\mathfrak{h}$-module. Then there exist $\mathbb{R}$-linear mappings $\lambda_1,\ldots,\lambda_d : \mathfrak{h} \to \mathbb{C}$ and a suitable basis $v_1,\ldots,v_d$ of $V$ such that, in the coordinates induced by $v_1,\ldots,v_d$,
\begin{equation}
 \label{eqn:tridiag_solv}
 \rho(X) = \left( \begin{array}{cccc} \lambda_1(X) &  & & \ast \\   & \lambda_2(X) &  & \\   &  & \ddots & \\ 0  &  &  & \lambda_d(X) \end{array} \right)~.
\end{equation}
The $\lambda_i$ are called {\em roots} of $V$.
\end{lemma}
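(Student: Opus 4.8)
The plan is to treat this as a version of Lie's theorem on the simultaneous triangularization of a representation of a solvable Lie algebra. Since the roots $\lambda_i$ are permitted to be complex valued, I would first complexify: replacing $V$ by $V \otimes_\RR \CC$ and $\mathfrak{h}$ by $\mathfrak{h} \otimes_\RR \CC$ if necessary, I may assume that the ground field is $\CC$ (algebraically closed, characteristic zero), that $V$ is a finite-dimensional complex vector space, and that $\mathfrak{h}$ is a complex solvable Lie algebra; solvability is inherited by the complexification, and any $\CC$-linear root on $\mathfrak{h}\otimes_\RR\CC$ restricts to an $\RR$-linear functional $\mathfrak{h} \to \CC$ as required. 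The whole statement then reduces to the existence of a single \emph{common eigenvector}: a nonzero $v_1 \in V$ and a linear functional $\lambda_1$ with $\rho(X) v_1 = \lambda_1(X) v_1$ for all $X$. Indeed, granting this, the triangular form (\ref{eqn:tridiag_solv}) follows by induction on $d = \dim V$; passing to the quotient module $V / \CC v_1$, the inductive hypothesis furnishes a triangularizing basis with roots $\lambda_2, \ldots, \lambda_d$, which I lift to vectors $v_2, \ldots, v_d \in V$, so that $v_1, \ldots, v_d$ is the desired basis.

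To produce a common eigenvector I would argue by induction on $\dim \mathfrak{h}$. As $\mathfrak{h}$ is solvable and nonzero, its derived algebra is proper, so I may choose a hyperplane $\mathfrak{k}$ with $[\mathfrak{h},\mathfrak{h}] \subseteq \mathfrak{k} \subsetneq \mathfrak{h}$; then $[\mathfrak{h},\mathfrak{k}] \subseteq [\mathfrak{h},\mathfrak{h}] \subseteq \mathfrak{k}$, so $\mathfrak{k}$ is a codimension-one ideal, again solvable. The inductive hypothesis applied to $\mathfrak{k}$ yields a nonzero weight space $W = \{ v \in V : \rho(Y) v = \lambda(Y) v \text{ for all } Y \in \mathfrak{k}\}$ for some functional $\lambda : \mathfrak{k} \to \CC$. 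Fixing $X_0 \in \mathfrak{h} \setminus \mathfrak{k}$ so that $\mathfrak{h} = \mathfrak{k} \oplus \CC X_0$, it then suffices to show that $W$ is $\rho(X_0)$-invariant: for then $\rho(X_0)|_W$ admits an eigenvector over the algebraically closed field $\CC$, and this vector is simultaneously an eigenvector for all of $\mathfrak{h}$, with $\lambda$ extended to $\mathfrak{h}$ by recording the new eigenvalue.

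The invariance of $W$ is the crux of the argument, and I expect it to be the main obstacle. For $v \in W$ and $Y \in \mathfrak{k}$ one has $\rho(Y)\rho(X_0) v = \rho(X_0)\rho(Y)v + \rho([Y,X_0])v = \lambda(Y)\rho(X_0)v + \lambda([Y,X_0]) v$, using that $[Y,X_0] \in \mathfrak{k}$; hence $\rho(X_0) v \in W$ provided $\lambda([Y,X_0]) = 0$ for all $Y \in \mathfrak{k}$. To establish this vanishing I would fix $v \in W$, $v \neq 0$, and consider the subspace $U$ spanned by the iterates $v, \rho(X_0) v, \rho(X_0)^2 v, \ldots$. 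A direct induction on the iterate index shows that $U$ is invariant under every $\rho(Y)$, $Y \in \mathfrak{k}$, and that in the basis formed by the linearly independent initial segment of iterates $\rho(Y)|_U$ is upper triangular with $\lambda(Y)$ repeated on the diagonal, so that $\tr(\rho(Y)|_U) = (\dim U)\,\lambda(Y)$. Applying this to $Y = [X_0, Y']$ with $Y' \in \mathfrak{k}$, and noting that on the $\rho(X_0)$- and $\rho(Y')$-invariant space $U$ the operator $\rho([X_0,Y'])|_U = [\rho(X_0)|_U, \rho(Y')|_U]$ is a commutator and hence has vanishing trace, I obtain $(\dim U)\,\lambda([X_0,Y']) = 0$, whence $\lambda([X_0,Y']) = 0$ since the characteristic is zero. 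This trace computation is precisely where algebraic closedness and characteristic zero — and thus the initial complexification — are indispensable. Finally, the linearity of each root is automatic: $\lambda_i(X)$ is the $i$-th diagonal entry of $\rho(X)$ in the fixed basis, and $\rho$ is linear in $X$, so each $\lambda_i$ is an $\RR$-linear map $\mathfrak{h} \to \CC$, which completes the proof.
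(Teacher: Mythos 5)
Your proof is correct, but the paper does not actually prove this lemma: it is quoted as a well-known special case of Lie's theorem, with a citation to \cite[Theorem~3.7.3]{Var} in place of an argument. What you have written out is, in essence, the standard proof of that cited theorem: complexify $(\varrho,V)$ and $\mathfrak{h}$, reduce the triangular form (\ref{eqn:tridiag_solv}) to the existence of a single common eigenvector by induction on $\dim V$, and produce that eigenvector by induction on $\dim\mathfrak{h}$ via a codimension-one ideal $\mathfrak{k}\supseteq[\mathfrak{h},\mathfrak{h}]$, with invariance of the $\mathfrak{k}$-weight space $W$ under $\rho(X_0)$ secured by the trace argument on the cyclic subspace $U$. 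All the steps check out: the inductive claim that each $\rho(Y)|_U$ is upper triangular with constant diagonal $\lambda(Y)$, the vanishing $(\dim U)\,\lambda([X_0,Y'])=\tr\left([\rho(X_0)|_U,\rho(Y')|_U]\right)=0$, and the use of algebraic closedness and characteristic zero exactly where you flag them. Your complexification step also silently repairs a small imprecision in the statement itself: with complex-valued roots, the triangularizing basis is really a basis of $V\otimes_{\mathbb{R}}\mathbb{C}$ rather than of the real space $V$ (a real basis would force real diagonal entries); this complexified reading is clearly what the paper intends, since the roots of the exponential modules introduced immediately afterwards are genuinely complex. In short, the paper buys brevity by outsourcing to the literature, whereas your argument makes the lemma self-contained and makes explicit where the $\mathbb{R}$-linearity of the $\lambda_i$ comes from, namely restriction to $\mathfrak{h}$ of $\mathbb{C}$-linear functionals on $\mathfrak{h}\otimes_{\mathbb{R}}\mathbb{C}$.
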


The following crucial definition is taken from \cite{Bernatetal}.
\begin{defn}
 Let $\mathfrak{h}< {\rm gl}(n,\mathbb{R})$ be exponential, and $(\rho,V)$ an $\mathfrak{h}$-module. We call $(\rho,V)$ a
module of exponential type, if all roots of $V$ are of the type $\lambda(X) = \psi(X) (1+i\alpha)$, with a suitable linear functional $\psi:\mathfrak{h} \to \mathbb{R}$.
\end{defn}

We can now formulate the central result concerning exponential solvable Lie groups.
\begin{thm} \label{thm:solv_adm}
 Let $H< {\rm GL}(n,\mathbb{R})$ be a closed connected subgroup, with exponential Lie algebra $\mathfrak{h}$.
 Assume further that $\mathbb{R}^n$ is an $\mathfrak{h}$-module of exponential type (with respect to the natural
 action). \cch{Then :}
\begin{enumerate}
 \item[(a)] $H$ is simply connected.
 \item[(b)] For all $x \in \mathbb{R}^n$: If ${\rm stab}_H(x)$ is discrete, it is trivial.
 \item[(c)] There exists a Borel fundamental domain $C \subset \widehat{\mathbb{R}^n}$ for all dual orbits.
\end{enumerate}
\end{thm}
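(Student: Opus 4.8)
The plan is to treat the three assertions in turn, leaning throughout on two structural facts: first, exponentiality of $\mathfrak{h}$ makes $\widetilde{\exp}:\mathfrak{h}\to\widetilde{H}$ a diffeomorphism onto the simply connected cover $\widetilde{H}$ of $H$; second, by the root decomposition the eigenvalues of any $X\in\mathfrak{h}$ acting on $\mathbb{C}^n$ are exactly the numbers $\lambda_i(X)=\psi_i(X)(1+i\alpha_i)$, which by the exponential-type hypothesis vanish precisely when their real part $\psi_i(X)$ vanishes. The decisive feature is that exponential type rules out \emph{purely imaginary} (rotational) eigenvalues on $\mathbb{R}^n$.

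For part (a) I would factor the matrix exponential as $\exp=q\circ\widetilde{\exp}$, where $q:\widetilde{H}\to H$ is the covering homomorphism. Since $\widetilde{\exp}$ is bijective, $H$ is simply connected iff $\exp:\mathfrak{h}\to H$ is injective, i.e. iff $\exp(X)=\mathbf{1}$ forces $X=0$. If $\exp(X)=\mathbf{1}$, every eigenvalue $e^{\lambda_i(X)}$ of $\exp(X)$ equals $1$, so each $\lambda_i(X)\in 2\pi i\mathbb{Z}$ is purely imaginary; by exponential type this forces $\psi_i(X)=0$ and hence $\lambda_i(X)=0$. Thus $X$ is nilpotent as a matrix, and $X=\log(\exp(X))=\log(\mathbf{1})=0$ via the terminating logarithm series.

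For part (b) the key claim is that $\exp(X)x=x$ implies $Xx=0$. To prove it I would split $x$ along the generalized eigenspaces of $X$ on $\mathbb{R}^n$: on the summand of a nonzero root we have $\psi_i(X)\neq0$, so $\exp(X)$ acts there with all eigenvalues of modulus $e^{\psi_i(X)}\neq1$, making $\exp(X)-\mathbf{1}$ invertible and killing that component; hence $x$ lies in the nilpotent ($\lambda=0$) part, where $\exp(X)-\mathbf{1}=X\cdot g(X)$ with $g(X)$ invertible, so $\exp(X)x=x$ yields $Xx=0$. Granting the claim, if $\mathrm{stab}_H(x)$ is discrete its Lie algebra $\{X\in\mathfrak{h}:Xx=0\}$ is trivial; writing any stabilizing element as $\exp(X)$ (legitimate since $\exp$ is bijective by part (a)), the claim gives $Xx=0$, hence $X=0$ and the element is $\mathbf{1}$. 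This claim is exactly where exponential type is indispensable — it fails for genuine rotations — and it is the conceptual heart of (b).

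For part (c) I would first reduce, via the theorem of Effros already invoked in the proof of Theorem \ref{thm:orbit_unipt}, to showing the dual action is regular, i.e. that $H/\mathrm{stab}_H(\omega)\to H.\omega$ is a homeomorphism for every $\omega$ (equivalently, every dual orbit is locally closed). I would establish this by induction on the number of blocks in the joint complex tridiagonalization of $\mathfrak{h}$, peeling off one block at a time and assembling the transversal with Lemma \ref{lem:mfd_ind}, whose semidirect-product hypothesis matches the block-triangular structure. The base case is a single root $\lambda=\psi(1+i\alpha)$: if $\psi\equiv0$ the block is unipotent and Theorem \ref{thm:orbit_unipt} supplies a Borel transversal directly; if $\psi\not\equiv0$, choosing $X_0$ with $\psi(X_0)=1$ makes $\exp(tX_0)$ a dilation-rotation whose modulus factor $e^{t}$ strictly increases, so a level set of the radial coordinate cuts each spiral orbit transversally and is combined with the unipotent directions. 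The main obstacle is precisely this complex-root base case: one must verify that the radial coordinate is a well-defined, strictly monotone Borel function along each orbit (this is where exponential type enters, forcing a nonzero root to carry genuine dilation rather than pure rotation), and that the section is Borel and meets each orbit once. A less self-contained alternative would be to cite the local closedness of orbits for exponential solvable groups acting in modules of exponential type from \cite{Bernatetal} and feed it into Effros' criterion.
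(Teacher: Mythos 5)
Your proofs of parts (a) and (b) are correct, and they take a genuinely different route from the paper's. The paper proves (a) by considering the left action of $H$ on ${\rm gl}(n,\mathbb{R})$ (a direct sum of $n$ copies of $\mathbb{R}^n$, hence again of exponential type), lifting it to the universal cover via \cite[Theorem 3.2.7]{Var}, and invoking \cite[Theorem I.3.3]{Bernatetal}, which asserts that stabilizers in the simply connected group are connected: for invertible $A$ the stabilizer equals the kernel of the covering map, which is then both discrete and connected, hence trivial; part (b) is also read off from that same cited theorem. You replace both citations by elementary spectral arguments: $\exp(X)=\mathbf{1}$ forces every eigenvalue $\lambda_i(X)=\psi_i(X)(1+i\alpha_i)$ of $X$ into $2\pi i\mathbb{Z}$, hence (being purely imaginary) to vanish, so $X$ is nilpotent and the terminating logarithm gives $X=0$; and $\exp(X)x=x$ forces $Xx=0$ by splitting $x$ over the generalized eigenspaces of $X$ in $\mathbb{C}^n$. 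Both arguments are sound; in particular your reduction of simple connectedness to the implication $\exp(X)=\mathbf{1}\Rightarrow X=0$ is legitimate because $\exp=q\circ\widetilde{\exp}$ with $q$ a homomorphism and $\widetilde{\exp}$ bijective, so this implication does give $\ker q=\{1\}$ even though $\exp$ itself is not a homomorphism. What your route buys is self-containedness: for (a) and (b) you need nothing beyond linear algebra and the exponential-type hypothesis, where the paper outsources both to the structure theory of \cite{Bernatetal}.

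For part (c), however, your primary plan has a genuine gap, precisely at the spot you flag. For a single nonzero root $\lambda=\psi(1+i\alpha)$, the Euclidean radial coordinate is in general \emph{not} strictly monotone along the orbits of $\exp(tX_0)$: the nilpotent part of the block contributes polynomial factors. For instance, with $X_0=\left(\begin{array}{cc} 1 & c \\ 0 & 1 \end{array}\right)$ and $|c|>2$, the function $t\mapsto\|\exp(tX_0)x\|^2=e^{2t}\bigl((x_1+tcx_2)^2+x_2^2\bigr)$ fails to be monotone. The quantity that does work is the modulus of the \emph{last} coordinate in the complex triangularization (the joint-eigenvector coordinate), which scales by exactly $e^{t\psi(X_0)}$, just as the $n$-th coordinate does in the paper's proof of Theorem \ref{thm:adm sca iff}; but then the invariant locus where this coordinate vanishes must be handled by a further induction before Lemma \ref{lem:mfd_ind} can be applied, and your sketch does not cover this. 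The paper sidesteps all of it: its entire proof of (c) is your fallback, namely \cite[Theorem I.3.8]{Bernatetal} (every dual orbit is open in its closure) fed into Effros' theorem \cite{Ef}. So as written, (a) and (b) stand on their own, while (c) is complete only if you adopt the fallback citation, which coincides with the paper's argument.
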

\begin{proof}
 First note that $\mathbb{R}^n$ and
$\widehat{\mathbb{R}^n}$ have the same roots, hence $\widehat{\mathbb{R}^n}$ is also of exponential type.

For (a) consider the left action of $\mathfrak{h}$ and $H$ on ${\rm gl}(n,\mathbb{R})$. It is equivalent to the diagonal action on an $n$-fold copy of $\mathbb{R}^n$, and therefore a $\mathfrak{h}$-module of exponential type as well. Furthermore, the stabilizers ${\rm stab}_H(A)$ are trivial, whenever $A$ is invertible. By \cite[Theorem~3.2.7]{Var}, the action lifts to an action of the simply connected covering group  $\widetilde{H}$.
Now \cite[Theorem I.3.3]{Bernatetal} implies that all associated stabilizers in $\widetilde{H}$ are connected. For invertible~$A$, the stabilizer coincides with the kernel of the covering map $p: \widetilde{H} \to H$. Since this kernel is also discrete, it has to be trivial. Thus $H$ is simply connected. Now part~(b) also follows from \cite[Theorem I.3.3]{Bernatetal}.

For the proof of part (c),  \cite[Theorem I.3.8]{Bernatetal} yields
that every dual orbit is open in its closure. Now the desired
statement follows from \ch{\cite[Theorems 2.6-2.9, $(5)
\Leftrightarrow (12)$]{Ef}}.
\end{proof}

\ch{Using Theorem \ref{thm:solv_adm}, Proposition
\ref{prop:local_free} and Theorem \ref{thm:char_adm} one obtains:}

\begin{cor} \label{cor:solv_adm}
  Let $H< {\rm GL}(n,\mathbb{R})$ be a closed connected subgroup, with exponential Lie algebra $\mathfrak{h}$. Then $H$ is admissible, provided that
\begin{enumerate}
 \item[(i)] There exists $h \in H$ with $|{\rm det}(h)| \not= \Delta_H(h)$;
 \item[(ii)] there exists at least one dual orbit on which $H$ acts locally freely;
 \item[(iii)] $\mathbb{R}^n$  is an $\mathfrak{h}$-module of exponential type.
\end{enumerate}

\ch{In particular, if $H$ is a closed, connected, abelian matrix
group with real spectrum, then $H$ is admissible if and only if (i)
and (ii) hold.}
\end{cor}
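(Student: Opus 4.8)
The plan is to verify the three conditions of Theorem \ref{thm:char_adm}, drawing on Theorem \ref{thm:solv_adm} and Proposition \ref{prop:local_free}. Condition (iii) of Theorem \ref{thm:char_adm} is literally hypothesis (i) of the corollary, so nothing is to be done there. For condition (i) of Theorem \ref{thm:char_adm}, the existence of a measurable fundamental domain, I would simply invoke Theorem \ref{thm:solv_adm}(c), which under hypothesis (iii) produces a Borel fundamental domain meeting every dual orbit; intersecting it with any conull $H$-invariant Borel set yields a measurable fundamental domain in the sense required by the definition preceding Theorem \ref{thm:char_adm}.

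The crux is condition (ii) of Theorem \ref{thm:char_adm}, namely that almost every dual stabilizer is compact. Here I would start from hypothesis (ii): a single dual orbit with locally free action provides a single $\omega \in \widehat{\mathbb{R}^n}$ whose stabilizer ${\rm stab}_H(\omega)$ is discrete. Applying Proposition \ref{prop:local_free}(b) to the dual (right) linear action then yields a conull, open, $H$-invariant set on which every stabilizer is discrete. Finally I would upgrade \emph{discrete} to \emph{trivial} by Theorem \ref{thm:solv_adm}(b): as noted in the proof of that theorem, $\widehat{\mathbb{R}^n}$ carries the same roots as $\mathbb{R}^n$ and is therefore of exponential type as well, so the conclusion that discrete stabilizers are trivial applies verbatim to the dual action. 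Thus almost every dual stabilizer is trivial, in particular compact, and Theorem \ref{thm:char_adm} gives admissibility. The one point demanding care is precisely this transfer of Proposition \ref{prop:local_free} and Theorem \ref{thm:solv_adm}(b) from the natural action to the dual action; both are statements about a linear action of $\mathfrak{h}$, hence apply to the transposed action, and the exponential-type hypothesis survives the passage to the dual.

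For the \emph{in particular} assertion, I would first check that the three hypotheses are automatic in the abelian real-spectrum case: an abelian $\mathfrak{h}$ has ${\rm ad}(X) \equiv 0$, hence is exponential, and real spectrum forces every root $\lambda_i : \mathfrak{h} \to \mathbb{C}$ to be real-valued, since $\lambda_i(X)$ ranges over the eigenvalues of $X$; thus $\mathbb{R}^n$ is of exponential type (take $\alpha = 0$, $\psi = \lambda_i$). The forward implication, that (i) and (ii) imply admissibility, is then immediate from the main part of the corollary.

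For the converse I would exploit that such an $H$ is in fact simply connected: for connected abelian $H$ the exponential is surjective, and real spectrum of $H$ propagates to real spectrum of $\mathfrak{h}$ (were $X$ to have an eigenvalue $a+bi$ with $b \neq 0$, then $\exp(tX) \in H$ would carry the nonreal eigenvalue $e^{ta}(\cos tb + i \sin tb)$ for suitable $t$), whence Lemma \ref{lem:real_spect_closed} makes $\exp : \mathfrak{h} \to H$ a diffeomorphism. Now assume $H$ is admissible. Theorem \ref{thm:char_adm}(iii) together with $\Delta_H \equiv 1$ yields hypothesis (i), while Theorem \ref{thm:char_adm}(ii) makes $\Omega_c$ conull; Lemma \ref{lem:cp_stab}(c) then shows $H$ acts freely on $\Omega_c$, so the stabilizers there are trivial and \emph{a fortiori} discrete, giving locally free action on those orbits, which is hypothesis (ii). I expect the transfer of the linear-action results to the dual action in the main part to be the only genuine obstacle; everything else is bookkeeping against the cited results.
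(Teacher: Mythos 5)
Your proposal is correct and follows essentially the same route as the paper, which proves the corollary precisely by combining Theorem \ref{thm:char_adm}, Theorem \ref{thm:solv_adm} (parts (b) and (c), with the observation that $\widehat{\mathbb{R}^n}$ has the same roots and hence is also of exponential type) and Proposition \ref{prop:local_free}(b), while the ``in particular'' statement rests, as in your argument, on the facts that abelian real-spectrum Lie algebras are exponential with real roots and that admissibility forces a.e.\ compact---hence, by simple connectedness via Lemma \ref{lem:real_spect_closed} and Lemma \ref{lem:cp_stab}(c), trivial---dual stabilizers. Your filling-in of the transfer from the natural to the dual action, and of the propagation of real spectrum from $H$ to $\mathfrak{h}$, matches what the paper leaves implicit.
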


\section{\ch{Finitely generated abelian matrix groups with real
spectrum}}

\label{sect:fga_real_spect}

Consider the following situation: Suppose we are given a finite set
of invertible, pairwise commuting matrices $A_1,\ldots,A_k \in {\rm
GL}\ch{(n,\RR)}$. Our task is to decide whether there exists a
continuous wavelet transform associated to the matrix group
$\cch{H=H_{d}=} \langle A_1,\ldots, A_k \rangle$, and possibly to
give a description of the admissible vectors. In effect, we would
like to consider the wavelet system $(T_x D_{A^m} \psi)_{m \in
\ZZ^k, x \in \ch{\mathbb{R}^{n}}}$, where we use the notation $A^m =
A_1^{m_1} \ldots A_k^{m_k}$, and look for conditions to reconstruct
arbitrary $f \in {\rm L}^2(\RR^n)$ from its scalar products from the
wavelet system via the inversion formula
\begin{equation}
 f = \sum_{m \in \ZZ^k} \int_{\ch{\RR^n}} \langle f, T_x D_{A^m} \psi
 \rangle ~  T_x D_{A^m} \psi~dx~~,
\end{equation}
which is equivalent to requiring that $\psi$ satisfy the discrete Calder\'on
condition
\begin{equation} \label{eqn:cald_gen}
 \sum_{m \in \ZZ^k} |\cch{\widehat{\psi}}(A^{-m}\omega)|^2 = 1~~, (a.e.).
\end{equation}

Several obstacles present themselves, in the following natural
ordering:
\begin{enumerate}
\item The mapping $\ZZ^k \ni m \mapsto A^m \in H$ need not be
injective (it is onto by construction of $H$). That is, we need to
check whether the generators $A_1,\ldots,A_k$ are {\em free}
generators. \item $H$ need not be discrete. \item Is $H$
admissible? As we have seen above, this amounts to the existence
of a group element with determinant $\not=1$ (a property that only
needs to be checked on the generators), and the existence of a
measurable fundamental domain. The latter problem is quite hard to
access directly.
\end{enumerate}

\ch{If we assume that all $A_i$ have positive eigenvalues, we can
now decide all these questions by standard linear algebra
techniques. We can compute a connected group $H_c$ containing~$H_d$
cocompactly in a straightforward
 manner: First block diagonalization of the generators, then computation of their
 matrix logarithms, $B_i = {\rm log}A_i$, for $i=1,\ldots,k$. Then, letting ${\mathfrak{h}}_c = {\rm span}(B_1,\ldots,\cch{B_k})$,
 the exponential map is a diffeomorphism onto the  closed connected
 simply connected group $H_c \supset H_d$.
 Its restriction yields a group isomorphism $\langle B_1,\ldots, \cch{B_k} \rangle \to H_d$,
 where the left-hand side is understood as additive subgroup of the vector space ${\mathfrak{h}}_c$.
 The fact that $\langle B_1,\ldots, \cch{B_k} \rangle$ generates ${\mathfrak{h}}_c$ as a vector space
 implies that ${\mathfrak{h}}_c/\langle B_1,\ldots, \cch{B_k} \rangle$ is compact, and hence
 $H_c/H_d$ is compact. Moreover, the problem of deciding whether $H_d$ is closed (i.e., discrete), is transferred to the
 analogous properties of $\langle B_1,\ldots, \cch{B_k} \rangle$.}

Now the
above list of questions can be answered in the following way:
\begin{enumerate}
\item The generators are free iff $B_1,\ldots,B_k \subset
{\mathfrak{h}}_c$ are free, with respect to the additive group of
that vector space. The latter means that any linear combination with integer coefficients, at least one of them nonzero, of $B_1,\ldots,B_k$ is nonzero. Clearly, this is the same as linear independence over the rationals, which can be checked by the Gauss algorithm.
\item Assuming that $B_1,\ldots,B_k$ are free, they
generate a subgroup algebraically isomorphic to $\ZZ^k$. This
subgroup is closed in ${\mathfrak{h}}_c$ iff ${\rm
dim}({\mathfrak{h}}_c) = k$: Note that since the $B_i$~span~${\mathfrak{h}}_c$, we always have ${\rm dim}({\mathfrak{h}}_c)
\le k$, and if "$<$" holds, the countable subgroup cannot be
closed by \cite[9.12]{HeRo}. Conversely, if the vectors
$B_1,\ldots,B_k$ span ${\mathfrak{h}}_c$ as a vector space, their
$\ZZ$-linear combinations are discrete.

By the exponential map, this statement transfers to
$A_1,\ldots,A_k$. In short, $H$ is discrete iff $(\log
A_1,\ldots,\log A_k)$ are $\mathbb{R}$-linearly independent.
\item Clearly, the determinant function is $\equiv 1$ on $\langle A_1,\ldots,A_k \rangle$ iff $|\det(A_i)| = 1$ for all $1 \le i \le k$.
\item With all previous tests passed, admissibility of $H$ is decided by checking local freeness of the action of $H_c$, using
\ref{prop:local_free} (c).
\end{enumerate}

 \ch{As a result of this discussion, we find a quite striking
 phenomenon as we pass from a connected group to a cocompact
 discrete subgroup. Recall that in principle there are two
 obstacles to  admissibility: Noncompact stabilizers and badly
 behaved orbit spaces. Theorem \ref{thm:adm sca iff} points out that
 for connected groups with positive eigenvalues only the
 stabilizers condition can fail. By contrast, we know that in the
 discrete case this condition is trivially fulfilled, confer Lemma
 \ref{lem:cp_stab}. Hence, as we pass from a connected group
 $H_c$ to a discrete cocompact subgroup $H_d$, one obstacle turns
 into the other, i.e., discretization of the noncompact stabilizers
 in $H_c$ results in pathological orbit spaces for $H_d$.}

\section{The general case: Partial results and counterexamples}

\label{sect:counter_ex}

For the general case the questions related to admissibility are not
so easily answered. In this section we present some counterexamples
showing that the simple \chhf{sufficient criteria that apply in  the
real spectrum case} are not valid in the general case. More
precisely, we consider connected matrix groups $H =
\exp(\mathfrak{h})$ and \ch{abelian matrix algebras containing~$H$}.
\chhf{The unit groups of the latter will allow to \cch{construct
counterexamples} showing that the sufficient criteria for the real
spectrum case, as formulated in Corollary \ref{cor:solv_adm}, no
longer work in the general case.}

The following theorem provides criteria for the existence of a measurable fundamental domain.
Note the gap between the necessary and the sufficient condition.
\begin{thm} \label{thm:char_mfd_algebra}
Let $H < {\rm GL}(n,\mathbb{R})$ be an abelian matrix group, and let $\mathcal{A} \subset gl(n,\mathbb{R})$ be an abelian matrix algebra with $H \subset \mathcal{A}$.
\begin{enumerate}
 \item[(a)] Suppose that there exists an $H$-invariant open subset $O \subset \mathbb{R}^n$
 of full measure such that, for all $x \in O$: ${\rm stab}_{\mathcal{A}^\times}(x) \cdot H$ is closed,
 \ch{where $\mathcal{A}^\times$ is the unit group of $\mathcal{A}$}.
  Then there exists a measurable fundamental domain for the $H$-orbits.
 \item[(b)] Suppose that there exists an $H$-invariant open subset $O \subset \mathbb{R}^n$ of positive measure such that, for all $x \in O$: ${\rm stab}_{\mathcal{A}^\times}(x) \cdot H$ is not closed. Then no measurable fundamental domain of the $H$-orbits in $\mathbb{R}^n$ can exist.
\end{enumerate}
\end{thm}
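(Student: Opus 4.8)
The plan is to reduce both statements to a single orbit-regularity criterion and then feed this into the theorem of Effros already used in Section~\ref{sect:adm_casc}. The criterion I would isolate is the following: \emph{for $x\in\mathbb{R}^n$ the orbit map $H/{\rm stab}_H(x)\to H.x$ is a homeomorphism (equivalently, $H.x$ is locally closed in $\mathbb{R}^n$) if and only if ${\rm stab}_{\mathcal{A}^\times}(x)\cdot H$ is closed in $\mathcal{A}^\times$.} Granting this, part (a) is almost immediate from Effros, while part (b) becomes a measure-theoretic (non-)smoothness statement. For the argument I take $H$ to be closed, hence a locally compact second countable group acting continuously.

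To prove the criterion, set $G=\mathcal{A}^\times$ and $S={\rm stab}_G(x)$. Since $G$ is open in the vector space $\mathcal{A}$ and the orbit map $G\ni g\mapsto gx$ is the restriction of the linear map $\mathcal{A}\ni a\mapsto ax$, which is open onto the subspace $\mathcal{A}x$, the orbit $G.x$ is open in $\mathcal{A}x$ and the orbit map is open. Hence $G.x\cong G/S$ topologically, and under this identification $H.x$ corresponds to $HS/S$ with the subspace topology, while ${\rm stab}_H(x)=H\cap S$. The crucial point is that $\mathcal{A}$, hence $G$, is abelian, so $HS$ is a \emph{subgroup} and $HS/S$ is a subgroup of $G/S$. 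Now I would invoke the standard fact that, for closed subgroups $H,S$ of a locally compact second countable group $G$, the natural continuous bijection $H/(H\cap S)\to HS/S$ is a homeomorphism precisely when $HS$ is closed: if $HS$ is closed the open mapping theorem for $\sigma$-compact groups makes the map open; conversely a homeomorphism forces $HS/S$ to be locally compact, hence locally closed, hence, being a subgroup of the abelian group $G/S$, closed, so that $HS$ is closed. Composing with $G.x\cong G/S$ yields the criterion.

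For part (a), the set $O$ is $H$-invariant, open (hence locally compact and second countable), and by the criterion every $H$-orbit in $O$ is regular. Applying Effros's theorem \cite[Theorems~2.1--2.9, $(2)\Leftrightarrow(12)$]{Ef} to the continuous action of $H$ on $O$ produces a Borel set $C\subset O$ meeting each $H$-orbit in $O$ in exactly one point. As $O$ is a conull $H$-invariant Borel set, $C$ is a measurable fundamental domain in the sense of Section~\ref{sect:adm_matrix}.

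For part (b), the criterion shows that every $x\in O$ lies on a non-locally-closed orbit. The hard part, and the main obstacle, is that the definition of a measurable fundamental domain permits discarding a null set, so the purely topological form of Effros's theorem, which concerns \emph{all} orbits, does not apply directly. I would instead use the measurable version of the Effros/Mackey smoothness dichotomy: existence of a measurable fundamental domain for $H$ is equivalent to smoothness of the $H$-orbit equivalence relation on a conull set, which in turn forces almost every orbit to be locally closed. Since $O$ has positive measure and consists entirely of non-locally-closed orbits, smoothness mod null fails and no measurable fundamental domain can exist. Concretely, this can be made self-contained by disintegrating Lebesgue measure on a positive-measure piece of $O$ over the (regular) $\mathcal{A}^\times$-orbits and observing that inside $\overline{{\rm stab}_{\mathcal{A}^\times}(x)\cdot H}.x$ the $H$-orbits are the cosets of a \emph{dense} subgroup; translation by a dense subgroup is ergodic, so no Borel transversal survives on a positive-measure set, and a Fubini argument rules out a global one. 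Carrying out this ergodic/disintegration step cleanly, in particular controlling the measurable dependence of the stabilizers and their closures on $x$, is where the real work lies.
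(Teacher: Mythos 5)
Your part (a) is correct and is essentially the paper's own argument: establish that the $\mathcal{A}^\times$-orbits are regular, identify $H.x$ inside $\mathcal{A}^\times.x \cong \mathcal{A}^\times/{\rm stab}_{\mathcal{A}^\times}(x)$ with the image of ${\rm stab}_{\mathcal{A}^\times}(x)\cdot H$, conclude that $H.x$ is relatively closed in the locally closed set $\mathcal{A}^\times.x$, and invoke Effros. Your one genuine variation is an improvement in self-containedness: where the paper quotes Zimmer's theorem on algebraic actions (\cite[3.1.3]{Zi}) to get regularity of the $\mathcal{A}^\times$-orbits, you observe that $a\mapsto ax$ is linear, so $\mathcal{A}^\times.x$ is open in the subspace $\mathcal{A}x$ and the orbit map is open; this is a sound elementary substitute. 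Your ``if and only if'' criterion (with the converse via local compactness of $HS/S$, hence closedness of a locally compact subgroup) is also correct, though only one direction is needed for (a).

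The gap is in part (b). The implication you lean on first --- that existence of a measurable fundamental domain (``smoothness mod null'') forces almost every orbit to be locally closed --- is false in general: for the $\mathbb{Z}$-action on $\mathbb{T}$ by an irrational rotation $\alpha$, the quasi-invariant ergodic measure $\sum_{n\in\mathbb{Z}}2^{-|n|}\delta_{n\alpha}$ is concentrated on a single dense (hence non-locally-closed) orbit, and a single point is then a Borel transversal on a conull invariant set. So the mere fact that every orbit in $O$ fails to be locally closed does not by itself exclude a measurable fundamental domain; what must be shown is that the ergodic components of Lebesgue measure over $O$ are \emph{not} supported on single $H$-orbits. That is exactly the content of your ``concrete'' fallback, which you explicitly defer (``where the real work lies''), and it is what the paper actually proves: with $G_x = \overline{H\cdot{\rm stab}_{\mathcal{A}^\times}(x)}$, the Haar measure $\mu_x$ on $G_x.x \cong G_x/{\rm stab}_{G_x}(x)$ is $H$-invariant and $H$-ergodic because $H\cdot{\rm stab}_{\mathcal{A}^\times}(x)$ is dense in $G_x$; Lebesgue measure on $O$ disintegrates over the regular $\mathcal{A}^\times$-orbits into measures equivalent to the Haar measures $\nu_x$ (cited from \cite{Fu_Calderon}); each $\nu_x$ disintegrates over the $G_x$-cosets into the $\mu_y$; and uniqueness of the ergodic decomposition, combined with the fact that a measurable fundamental domain forces a decomposition into orbit-supported measures (\cite[Theorem 12]{Fu_Calderon}, \cite{MiroThesis}), yields the contradiction. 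Incidentally, the measurability worry you raise is milder than you fear: by commutativity, ${\rm stab}_{\mathcal{A}^\times}(y)$, and hence $G_y$, is \emph{constant} as $y$ ranges over a single $\mathcal{A}^\times$-orbit, so the fiberwise disintegration is just Weil's formula, and the only genuinely measure-theoretic input is the disintegration over the $\mathcal{A}^\times$-orbits. As written, your part (b) rests on an invalid general implication, and the valid argument you sketch is left unexecuted.
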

\begin{proof}
First note for the unit group $\mathcal{A}^\times$ that $\mathcal{A}^\times = \mathcal{A} \cap {\rm GL}(n,\mathbb{R})$, hence $\mathcal{A}^\times$ is an algebraic subgroup of ${\rm GL}(n,\mathbb{R})$. Hence, by \cite[3.1.3]{Zi}, the $\mathcal{A}^\times$-orbits in $\mathbb{R}^n$ are locally closed, and the natural projection $\mathcal{A}^\times \to \mathcal{A}^\times . x$ induces a homeomorphism $\mathcal{A}^\times / {\rm stab}_{\mathcal{A}}^\times(x) \to \mathcal{A}^\times . x$.

Now let us prove part (a): Since $H \subset \mathcal{A}^\times$, and
the larger group is abelian, $H$  acts on each orbit
$\mathcal{A}^\times. x \subset \mathbb{R}^n$. Let $p_x :
\mathcal{A}^{\times} \to \mathcal{A}^\times \cdot x$ denote the
quotient map. Then  $p_x^{-1}(H.x) = {\rm
stab}_{\mathcal{A}^\times}(x) \cdot H$ is closed, and thus $H. x
\subset \mathcal{A}^\times .x$ is relatively closed. Since the
latter is locally closed, $H.x$ is also locally closed, for all $x
\in O$. Now \ch{\cite[Theorems 2.6\nobreakdash--2.9, $(5) \Leftrightarrow
(12)$]{Ef}} yields the existence of a Borel set meeting \ch{each}
$H$-orbit in \ch{$O$} precisely once.

For the proof of part (b), let $G_x$ denote the closure of $H \cdot
{\rm stab}_{\mathcal{A}^\times}(x)$. Then $G_x \subset
\mathcal{A}^\times$ is a closed subgroup containing $H$. Let $\mu_x$
denote the Haar measure on $G_x /{\rm stab}_{G_x}(x)$, transferred
to the orbit $G_x .x$ via the quotient map. Then, by definition of
$G_x$, \ch{$H.y \subset G_x .x$} is dense for every $y \in G_x. x$,
which implies that $H$ acts ergodically on $G_x \cdot x$, if the
latter is endowed with $\mu_x$. Further, let $\nu_x$ denote the Haar
measure of $\mathcal{A}^\times /{\rm stab}_{\mathcal{A}^\times}(x)$,
transferred to $\mathcal{A}^\times .x$ by the quotient map. Then,
since the $\mathcal{A}^\times$-orbits are locally closed, it follows
that Lebesgue measure on \ch{$O$} decomposes into measures
equivalent to the $\nu_x$ \ch{(see \cite{Fu_Calderon})}. On the
other hand, each $\nu_x$ decomposes into the $\{ \mu_y : y \in
\mathcal{A}^\times .x \}$. But these are $H$-ergodic. Thus we have
found an ergodic decomposition into measures that are not supported
on single orbits. Now uniqueness of the ergodic decomposition yields
that Lebesgue measure on \ch{$O$} cannot be decomposed into measures
over the $H$-orbits in \ch{$O$}. Now \cite[Theorem~12]{Fu_Calderon}
\ch{and \cite[Theorem 2.65]{MiroThesis}} yields the desired
contradiction.
\end{proof}

We begin with the problem of deciding whether $H$ acts freely. Recall that, if the spectrum of the group is real, we only need to check for local a.e. freeness. The following example shows that in general this does not imply that the action is free a.e.:
\begin{ex}We construct a simply connected, connected closed abelian matrix group that acts locally freely but not freely almost everywhere. Let $\mathfrak{h} \subset gl(3,\mathbb{C}) \subset gl(6,\mathbb{R})$ be given as $\mathfrak{h} = {\rm span}(Y_1,Y_2,Y_3)$, where
\[
 Y_1 = \left( \begin{array}{ccc} i & 1 &   \\  & i & \\ & & i \end{array} \right)~,~ Y_2 =  \left( \begin{array}{ccc} 0 &  & 1  \\  & 0 & \\ & & 0 \end{array} \right), Y_3 =  \left( \begin{array}{ccc} 0 &  & i  \\  & 0 & \\ & & 0 \end{array} \right)~.
\] Note that only diagonal elements and nonzero off-diagonal elements are given, the remaining entries are zero. If $\mathfrak{h} = \mathfrak{h}_0 + \mathfrak{h}_1$ denotes the decomposition from Lemma \ref{lem:def_h0}, we see that $\mathfrak{h}_0$ is trivial, hence $H = \exp(\mathfrak{h})$ is a closed, simply connected abelian matrix group, with $\exp: \mathfrak{h} \to H$ a bijection. For $v \in \mathbb{C}^3 \equiv \mathbb{R}^6$ with $v_3 \not=0$, one immediately checks that the linear mapping
\[
 \mathbb{R}^3 \ni r \mapsto \sum_{i=1}^3 r_i Y_i v
\] is one-to-one, and thus the action of $H$ on the orbit of $v$ is locally free. On the other hand, introducing the vector $s \in \mathbb{R}^3$,
\[ s_1 = 1~,~ s_2 = - {\rm Re}\left( \frac{v_2}{v_3} \right)~,~s_3 = - {\rm Im}\left( \frac{v_2}{v_3} \right)~,
\] it follows for all $k \in \mathbb{Z}$ that $\exp\ch{(\sum_i 2 \pi k s_i Y_i)} (v) = v$. Thus ${\rm stab}_H(v)$ is not compact. \\
We can extend the group by including $Y_0 = \mathbf{1}_3$. The
resulting larger group \ch{$\widetilde{H}$} is a closed, simply
connected and connected abelian group. It fulfills the admissibility
criteria (i) and (iii) from Theorem \ref{thm:char_adm}, acts locally
freely, but nonetheless fails criterion (ii).  ((iii) is fulfilled
since \ch{$\mathbf{1}_3 \in \widetilde{\mathfrak{h}}$}. For checking
(i), use the algebra $\mathcal{A} = {\rm span}(Y_0,\ldots,Y_3)$
\ch{and apply Theorem \ref{thm:char_mfd_algebra} (a)}).
\end{ex}

Finally, an example of a simply connected closed matrix group
fulfilling all admissibility criteria \ch{of Theorem
\ref{thm:char_adm}} except
 (b)(i):
\begin{ex}
 Let $\mathfrak{h} \subset gl(4,\mathbb{C}) \subset \ch{gl(8,\mathbb{R})}$ be defined as $\mathfrak{h} = {\rm \span} \{Y_0,Y_1,Y_2,Y_3 \}$, where $Y_0 = \mathbf{1}_4$, and
 \ch{\begin{eqnarray*}
  Y_1 = \left( \begin{array}{cccc} i \pi & &  & \\ & i & 1 & \\ & & i &  \\ & & & i \end{array} \right) ~,~
Y_2 = \left( \begin{array}{cccc} 0 & &  & \\ & 0 & i & \\ & & 0 &
\\ & & & 0 \end{array} \right) ~,~
 Y_3 = \left( \begin{array}{cccc} 0  & &  & \\ & 0 &  & 1 \\ & & 0 &  \\ & & & 0 \end{array} \right)~.
 \end{eqnarray*}}%
Then it is easily checked that $H = \exp(\mathfrak{h})$ is a closed, simply connected abelian matrix group. Again, admissibility condition (iii) is guaranteed by including $Y_0$ in $\mathfrak{h}$. Furthermore, it is straightforward to verify that for every $v \in \mathbb{C}^4$ such that $v_1 \not=0 \not=v_4$ and with $(v_3,iv_4)$ $\mathbb{R}$-linearly independent, the stabilizer of $v$ in $H$ is trivial. \\
In order to see that $H$ does not fulfill the admissibility condition (i), we introduce the commutative matrix algebra $\mathcal{A}$ consisting of all
\begin{equation} \label{eqn:gen_elem_B}
 B =  \left( \begin{array}{cccc} z_1 & &  & \\ & z_2 & z_3 & z_4 \\ & & z_2 &  \\ & & & z_2 \end{array} \right) \in gl(4,\mathbb{C})~,
\end{equation} with $z_1,\ldots,z_4 \in \mathbb{C}$ arbitrary. Given $v \in \mathbb{C}^4$ with $v_1 \not= 0 \not= v_4$, we compute that
\[
{\rm stab}_{\mathcal{A}^\times} (v) = \{B \mbox{ as in } (\ref{eqn:gen_elem_B})~:~z_1 =z_2 = 1 ~,~ z_3v_3 + z_4 v_4 = 0 \}~.
\]
We next show that, whenever $iv_3,v_4$ are linearly independent,
then $H \cdot {\rm stab}_{\mathcal{A}^\times} (v)$ is not closed
\ch{and then Theorem \ref{thm:char_mfd_algebra} (b) gives the
desired conclusion}. To this end, observe that $H \cdot {\rm
stab}_{\mathcal{A}^\times}(v) = \exp(\mathfrak{g})$, with
\begin{equation} \label{eqn:Lie_alg_stab}
 \mathfrak{g} = \mathfrak{h} + \{ B \mbox{ as in } (\ref{eqn:gen_elem_B})~:~ z_1 = z_2=0~,~ z_3 v_3 + z_4 v_4 = 0 \} ~.
\end{equation}
Next, we note that for  $iv_3,v_4$ linearly independent,
\[
 {\rm span}_{\mathbb{R}} \left( \{ (i,0),(0,1) \} \cup \{ (z_3,z_4) \in \mathbb{C}^2 : z_3 v_3 + z_4 v_4 = 0 \} \right) = \mathbb{C}^2 ~.
\] To see this, observe that our assumptions guarantee that the $\mathbb{R}$-linear map
\[
 \mathbb{C}^2 \ni (z_3,z_4) \mapsto \ch{z_3 v_3}+z_4 v_4
\] is injective on the $\mathbb{R}$-span of  $(i,0),(0,1)$. Hence this space has trivial intersection with the kernel of the linear map, and thus a simple dimension argument yields that the two spaces span all of $\mathbb{C}^2$.

But this implies that $\mathfrak{g}$ contains the matrix
\[
  X_0 =  \left( \begin{array}{cccc} i \pi & &  & \\ & i &  &  \\ & & i &  \\ & & & i \end{array} \right)
\] and in fact, all (complex) diagonal matrices in $\mathfrak{g}$ with purely imaginary entries are real multiples of $X_0$.
\chhf{Thus,  if $\mathfrak{g} = \mathfrak{g}_0 + \mathfrak{g}_1$ is the decomposition from Lemma \ref{lem:def_h0}, then $\mathfrak{g}_0 = \mathbb{R} X_0$. But then Theorem \ref{thm:char_closed_abel_part} and Lemma
\ref{lemma:char_closed_abel}}
allow to conclude that $\exp(\mathfrak{g})$ is not closed.
\end{ex}

\section*{Acknowledgements}
HF would like to thank the Universitat Aut\'onoma de Barcelona,
Departament de Ma\-te\-m\`a\-ti\-ques, for its hospitality.

\vspace{1cm}

\section*{Departament  de  Matem\`{a}tiques,  Universitat  Aut\`{o}noma
de Barcelona, 08193 Bellaterra-Barcelona, Catalonia.}
\vspace{-5mm} \hspace{4mm}\emph{E-mail
address:}\hfill\texttt{bruna@mat.uab.cat}\hfill
\texttt{jcufi@mat.uab.cat}\hfill \texttt{marga@mat.uab.cat}

\section*{Lehrstuhl A f\"{u}r Mathematik, RWTH Aachen, 52056 Aachen, Germany.}
\vspace{-5mm}
\begin{center}
\emph{E-mail
address:}\hspace{4mm}\texttt{fuehr@MathA.rwth-aachen.de}
\end{center}

\end{document}